\newcommand{\R}{\mathbb R}
\newcommand{\N}{\mathbb N}
\newcommand{\C}{\mathcal C}
\newcommand{\Q}{\mathbb Q}
\newcommand{\Z}{\mathbb Z}
\def\mod{\mathbf{Mod}}
\def\ent{\mathrm{ent}}
\def\End{\mathrm{End}}
\def\Im{\mathrm{Im}}
\global\def\mod#1{\mathrm{Mod}(#1)}
\def\ient{\widetilde\ent_v }
\def\rk{\mathrm{rk}}
\def\Fin{\mathrm{Fin}_v}
\def\I{\mathcal I}
\newtheorem{theorem}{Theorem}[section]
\newtheorem{lemma}[theorem]{Lemma}
\newtheorem{corollary}[theorem]{Corollary}
\newtheorem{proposition}[theorem]{Proposition}
\theoremstyle{definition}
\newtheorem{definition}[theorem]{Definition}
\theoremstyle{remark}
\numberwithin{equation}{section}
\begin{document}

\title{Intrinsic valuation entropy}

\author{Luigi Salce}
\address{Dipartimento di Matematica, Universit\`a di Padova,
Via Trieste 63 - 35121 Padova, ITALY}
\email{salce@math.unipd.it}
\thanks{}

\author{Simone Virili}
\address{Dept.\ de Matem\'{a}ticas, Universidad de Murcia,
30100 Espinardo, Murcia, SPAIN
}
\email{virili.simone@gmail.com}
\thanks{The second named author is supported by the Ministerio de Econom\'{\i}a y Competitividad of
Spain via a grant `Juan de la Cierva-Formaci\'on'. He is also supported by a research project
from the Ministerio de Econom\'{\i}a y Competitividad of Spain (MTM2014-53644-P) and from the
Fundaci\'on `S\'eneca' of Murcia (19880/GERM/15), both with a part of FEDER funds.}

\subjclass[2010]{Primary: 13F30, 16S50. 
Secondary: 13C60, 37A35.}
\keywords{Valuation domain, endomorphism, intrinsic valuation entropy, inert submodule.}

\date{}

\begin{abstract}
We extend the notion of intrinsic entropy for endomorphisms of Abelian groups to endomorphisms of modules over an Archimedean non-discrete valuation domain $R$, using the natural non-discrete length function introduced by Northcott and Reufel for such a category of modules.
We prove that this notion of entropy is a length function for the category of $R[X]$-modules, it satisfies (a suitably adapted version of) the Intrinsic Algebraic Yuzvinski Formula and that it is essentially the unique invariant for $\mod {R[X]}$ with these properties.
\end{abstract}

\maketitle

\begin{center}
\dedicatory{Dedicated to Mike Prest on the occasion of his $65^{\text{th}}$ birthday}
\end{center}

\section{Introduction}

Several different notions of entropy have been introduced in the past few years in the algebraic setting, allowing  in-depth investigation of the dynamical behavior of endomorphisms of Abelian groups (see \cite{DGSZ,DG, DGSV, SZ}). Among these notions, the {\it intrinsic algebraic entropy}, introduced and investigated in \cite{DGSV} and
denoted by $\widetilde{\ent}$, is helpful for several reasons: it is meaningful for any kind of groups (not only for torsion groups); it satisfies the Addition Theorem on the whole category of Abelian groups; it is easily computable for linear transformations of finite dimensional vector spaces over the field of the rational numbers via the Intrinsic Algebraic Yuzvinski Formula (see \cite{DGSV,GV, SV1}).

The first notion of algebraic entropy for endomorphisms of discrete groups, introduced in \cite{AKM,W} and denoted here by $\ent$, has been deeply investigated in \cite{DGSZ}, and was then extended to modules over arbitrary rings via the tool of ``sub-additive invariants" (see \cite{SZ}), and later on widely explored in \cite{SVV} using the notion of  ``discrete length functions". On the other hand, the intrinsic algebraic entropy $\widetilde{\ent}$ did not receive a similar attention. Instead of trying to develop a general theory of the intrinsic algebraic entropy induced by a length function on the category of modules over an arbitrary ring, we pursue an intermediate goal, approaching this matter from a particular, but significant, point of view. The goal of this paper is to introduce and investigate the intrinsic algebraic entropy $\widetilde{\ent}_v$ over an archimedean non-discrete valuation domain $R$ with valuation $v\colon Q \to \Gamma \cup \{ \infty \}$, where $Q$ denotes the field of quotients of $R$ and $\Gamma$ is a dense subgroup of the totally ordered additive group $\R$ of the real numbers. In this setting there is a natural length function $L_v\colon\mod R \to \R_{\geq 0} \cup  \{ \infty \}$ induced by the valuation $v$, as proved by Northcott-Reufel in \cite{NR}. We will call the entropy $\widetilde{\ent}_v$ the {\it intrinsic valuation entropy}.

This setting has been already considered by Zanardo in \cite{Z}, who studied the valuation entropy $\ent_v$
induced by the length function $L_v$. Similarly to the Abelian group setting, this entropy is useful only for endomorphisms $\phi$ of torsion $R$-modules $M$. In particular, Zanardo computed $\ent_v(\phi)$ when $M$ is the cyclic $\phi$-trajectory generated by a torsion element (for these notions see the next section).

The particular point of view considered in this paper is significant for three reasons. Firstly, as proved by Northcott-Reufel in \cite{NR}, in this setting all the non-trivial non-discrete length functions over arbitrary valuation domains are multiples of the length functions $L_v$ mentioned above. Secondly, non-discrete valuation domains are a first nice example of non-Noetherian rings, for which some techniques used in the investigation of $\widetilde{\ent}$ and holding for Noetherian rings are not applicable. We need new techniques blending those used in \cite{DGSV} for Abelian groups and those used in \cite{SV} working with non-discrete length functions.
Finally, in \cite{SV} we were able to prove the Uniqueness Theorem for algebraic entropies induced by non-discrete length functions only in the particular setting of archimedean non-discrete valuation domains and of the length function $L_v$ above; this fact indicates that this setting is the appropriate first step {to be investigated} in the non-Noetherian case.

\medskip\noindent
{\bf Structure of the paper.} 
In Section 2 we give the definition of the intrinsic algebraic entropy $\widetilde{\ent}_v$ and prove its basic properties needed in the proof of the main results appearing in the next sections. We also prove that, similarly to what happens in the Abelian group setting, for endomorphisms $\phi$ of torsion $R$-modules the equality $\widetilde{\ent}_v(\phi)= \ent_v(\phi)$ holds.

\medskip
In Section 3 we prove that $\widetilde{\ent}_v$ is an upper-continuous invariant for the category $\mod{R[X]}$, looking as usual at an $R$-module $M$ with an endomorphism $\phi: M \to M$ as an $R[X[$-module via the action induced by $\phi$; we denote this module by $M_\phi$ and we set  $\widetilde{\ent}_v(M_\phi)= \widetilde{\ent}_v(\phi)$.
We also provide a formula characterizing $\widetilde{\ent}_v(M_\phi)$ as a supremum of lengths of suitable $R$-modules, a crucial tool in proving the Addition Theorem.

The long and articulate demonstration of this fundamental result takes all Section 4. It is splitted in two parts: first we prove the sub-additivity of $\widetilde{\ent}_v$, and then we prove with more efforts its super-additivity.

\medskip
In Section 5 we prove the Intrinsic Algebraic Yuzvinski Formula (IAYF, for short), in a suitable formulation adapted to the present situation. We will explain where this theorem comes from, comparing its statement with the analogous formula for the intrinsic algebraic entropy $\widetilde \ent$ for Abelian groups.

\medskip
In Section 6 we prove the Uniqueness Theorem, which states that the intrinsic valuation entropy is the unique length function for the category $\mod {R[X]}$ satisfying the IAYF and such that, composed with the tensor functor $ - \otimes _R R[X]$, equals the length function $L_v$.

\section{Definitions and preliminary facts}

Let $R$ be a ring and let $\C$ be a Serre subcategory of $\mod R$.  A function 
\[
L\colon\C\to \R_{\geq0}\cup\{\infty\}= \R^*,
\] 
is an {\em invariant} if $L(0)=0$ and $L(M)=L(M')$ whenever $M\cong M'$. Furthermore, 
\begin{enumerate}[\rm --]
\item $L$ is {\em additive} if
$L(M)=L(M')+L(M'')$ for any short exact sequence $0\to M'\to M\to M''\to 0$ in $\C$;
\item $L$ is {\em upper continuous } if $L(M)=\sup\{L(F):F\leq M,  \ F \text{ finintely generated}\}$.
\end{enumerate}
An invariant which is both additive and upper continuous is said to be a {\em length function}. If the finite values of a given length function $L$ form a discrete subset of $\R_{\geq0}$, then  $L$ is said to be {\it discrete}.

\subsection{The general setting}

In this paper we always assume that $R$ is a non-discrete archimedean valuation domain, that is, $R$ is $1$-dimensional and its value group $\Gamma(R)$ is a dense subgroup of the additive group of the real numbers $\R$, with valuation $v:Q \to \Gamma(R) \cup \{\infty \}$, where $Q$ is the field of fractions of $R$. We denote by $P$ the maximal ideal of $R$, which is not cyclic; every ideal in $R$ is either cyclic or countably generated. For a comprehensive treatment of modules over valuation domains we refer to \cite{FS} and \cite{FS1}.

\smallskip
Northcott and Reufel  \cite{NR} proved that, under our hypotheses on $R$, there is a unique (up to scalar multiplication) length function in $\mod R$ whose values do not form a discrete subset of $\R^*$. This function, that we denote here by
\[
L_v: \mod  R \to \R^* ,
\] 
is determined by the values on cyclic modules via the following formula: 
\[
L_v(R/I):= \inf_{r \in I} v(r).
\]
For an $R$-module $M$, the value $L_v(M)$ will be called the {\em valuation length} of $M$; if $L_v(M)< \infty$, we will sometime say that $M$ is $L_v$-finite.

\smallskip
Obviously, $L_v(R)=\infty$ and $L_v(R/P)=0$. More generally, any $R$-module which is not torsion has infinite valuation length, since it contains a submodule isomorphic to $R$, while it is immediate to check that $L_v(M)=0$ if and only if $M$ is semisimple. Furthermore, as noticed by Zanardo \cite{Z}, every finitely generated torsion $R$-module has finite valuation length. The following  lemmas give criteria for a torsion module to have finite valuation length and they will be quite important for us in the forthcoming sections.

\begin{lemma}\label{fin_gen_tor_fin}
Let $F$ be a finitely generated $R$-module. Then, any torsion submodule of $F$ has finite valuation length. 
\end{lemma}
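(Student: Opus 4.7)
The plan is to reduce the statement to the observation (attributed to Zanardo and recalled just above) that every finitely generated torsion $R$-module has finite valuation length. To this end, I would first show that the full torsion submodule $t(F)$ of $F$ is itself finitely generated, and then use monotonicity of $L_v$ to handle an arbitrary torsion submodule of $F$.

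For the finite generation of $t(F)$, I would consider the short exact sequence
\[
0 \to t(F) \to F \to F/t(F) \to 0.
\]
The quotient $F/t(F)$ is finitely generated and torsion-free over the valuation domain $R$. A classical result (see \cite{FS}) asserts that every finitely generated torsion-free module over a valuation domain is free of finite rank; thus $F/t(F) \cong R^n$ is projective, and the displayed sequence splits. Consequently, $t(F)$ is a direct summand of $F$, hence a quotient of a finitely generated module, hence finitely generated.

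Zanardo's observation then gives $L_v(t(F)) < \infty$. Finally, if $T \leq F$ is any torsion submodule, then $T \leq t(F)$, and since $L_v$ is additive on short exact sequences with values in $\R_{\geq 0} \cup \{\infty\}$, we get $L_v(T) \leq L_v(t(F)) < \infty$, which is exactly what is claimed. The only non-routine input here is the structure theorem used to conclude that $F/t(F)$ is free; since this is classical for valuation domains, no serious obstacle is anticipated.
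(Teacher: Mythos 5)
Your proof is correct and follows essentially the same route as the paper: both arguments show that $t(F)$ is a finitely generated direct summand of $F$ and then invoke monotonicity of $L_v$ on the inclusion $T \leq t(F)$. The only difference is cosmetic — the paper cites the fact that $t(F)$ is a summand directly from \cite[V.2.9]{FS1}, while you derive it from the freeness of the finitely generated torsion-free quotient $F/t(F)$, which is just unpacking that citation one step further.
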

\begin{proof}
It is well known (see \cite[V.2.9]{FS1}) that the torsion part $t(F)$ of $F$ is a summand of $F$, so it is finitely generated. By the above discussion, $L_v(t(F))<\infty$ and, given a torsion submodule $T$ of $ F$, the inclusion $T\leq t(F)$ gives that $L_v(T)\leq L_v(t(F))<\infty$.
\end{proof}

\begin{lemma}\label{bounded_is_finite}
Let $M=Q^n$ for some positive integer $n$, and let $R^n\leq K\leq N\leq Q^n$ be submodules such that $N/K$ is a bounded $R$-module. Then, $L_v(N/K)<\infty$.
\end{lemma}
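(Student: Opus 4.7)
The plan is to exploit upper continuity of $L_v$: it suffices to bound uniformly the valuation length of finitely generated submodules of $N/K$, after which lifting back to $N$ and invoking the structure theory of finitely generated modules over a valuation domain will conclude the proof.

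Since $N/K$ is bounded, choose $r\in R\setminus\{0\}$ with $rN\subseteq K$; I will show $L_v(F)\le n\,v(r)$ for every finitely generated $F\le N/K$, which by upper continuity gives $L_v(N/K)\le n\,v(r)<\infty$. Given such an $F$, lift a finite generating set to elements $\tilde v_1,\dots,\tilde v_m\in N$ and set $\tilde F=R\tilde v_1+\cdots+R\tilde v_m\le N\le Q^n$. Then $F$ is the image of $\tilde F$ in $N/K$, so $F\cong\tilde F/(\tilde F\cap K)$. Since $r\tilde F\subseteq rN\subseteq K$ and $r\tilde F\subseteq \tilde F$ give $r\tilde F\subseteq\tilde F\cap K$, we deduce that $F$ is in fact a quotient of $\tilde F/r\tilde F$, and additivity of $L_v$ yields $L_v(F)\le L_v(\tilde F/r\tilde F)$.

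Now $\tilde F$ is finitely generated and torsion-free (as a submodule of $Q^n$) and of rank at most $n$, since $\tilde F\otimes_R Q$ embeds in $Q^n$. By the classical fact that every finitely generated torsion-free module over a valuation domain is free (see \cite{FS1}, which also uses that finitely generated ideals of $R$ are cyclic, as recalled in the setup above), we have $\tilde F\cong R^k$ for some $k\le n$. Therefore $\tilde F/r\tilde F\cong(R/rR)^k$ has valuation length $k\,v(r)\le n\,v(r)$, which gives the desired uniform bound.

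The main delicate step is this last one. Without the freeness of $\tilde F$, the only direct bound comes from a presentation $R^m\twoheadrightarrow\tilde F$, which yields merely $L_v(\tilde F/r\tilde F)\le m\,v(r)$; but $m$ can be much larger than the rank $n$ and depends on $F$, so this gives no uniform bound. The structure theorem is precisely what converts the ambient rank bound $\tilde F\otimes_R Q\hookrightarrow Q^n$ into a bound on the minimal number of generators, and hence produces an estimate independent of $F$. (Note that the hypothesis $R^n\le K$ is not actually needed in this argument; presumably it is included for compatibility with later applications of the lemma.)
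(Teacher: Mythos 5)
Your proof is correct, and it takes a genuinely different route from the paper. The paper invokes the structure theorem for modules between $R^n$ and $Q^n$ (via \cite[XII.1.1]{FS1}): such a quotient $N/K$ is weakly polyserial, each uniserial factor is bounded and hence standard uniserial of finite valuation length, and the (finite) sum of these lengths is finite. You instead argue by upper continuity: you bound $L_v(F)$ uniformly for every finitely generated $F\le N/K$ by lifting $F$ to a finitely generated $\tilde F\le N\le Q^n$, observing $F$ is a quotient of $\tilde F/r\tilde F$ where $rN\subseteq K$, and using that $\tilde F$ is finitely generated torsion-free over a valuation domain, hence free of rank $k\le n$, so $L_v(\tilde F/r\tilde F)=k\,v(r)\le n\,v(r)$. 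Your approach is more elementary (it avoids the weakly polyserial machinery), produces an explicit bound $L_v(N/K)\le n\,v(r)$, and, as you correctly note, does not use the hypothesis $R^n\le K$ at all, whereas the paper's citation to \cite[XII.1.1]{FS1} does rely on $N/K$ sitting between $R^n$ and $Q^n$. One cosmetic point: your choice of the letter $\tilde v_i$ for lifted generators clashes visually with the valuation $v$; a different symbol would be cleaner.
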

\begin{proof}
By \cite[XII.1.1]{FS1}, $N/K$ is weakly polyserial, that is, it has a chain of submodules $0=H_0<H_1<\cdots<H_t=N/K$ such that $H_i/H_{i-1}$ is uniserial for all $i\leq t$. As $N/K$ is bounded, every factor $H_i/H_{i-1}$ is such. Since uniserial modules over an archimedean valuation domain are either cyclic or countably generated (see \cite[VII.1]{FS}), every factor $H_i/H_{i-1}$ is standard uniserial, that is, isomorphic to a module of the form $J/I$, for $0<I<J \leq R$ (see \cite[VII.1.3]{FS}), so it has finite valuation length. Thus, $L_v(N/K)=\sum_{i=1}^tL_v(H_i/H_{i-1})<\infty$, as desired. 
\end{proof}

Let us also remark that there are plenty of non-finitely generated modules of finite valuation length; for example, if $\Gamma(R)$ contains $\Z[1/2]$, the subring of $\Q$ generated by $1/2$, and $M=\bigoplus _{n \geq 1}R/I_n$, with $L_v(R/I_n)=2^{-n}$, then 
\[
L_v(M)=\sum_{n \geq 1} L_v(R/I_n)=\sum_{n \geq 1} 2^{-n}=1.
\] 

Let us conclude this subsection recalling that there are just, up to scalar multiplication, only two more non-trivial length functions of $\mod R$ (where by ``non-trivial'' we mean functions that assume some finite non-zero value): the {\em composition length} 
\[
\ell\colon \mod R\to \R^*,
\] 
characterized by the fact that $\ell(R/P)=1$, and the {\em torsion-free rank} 
\[
\rk_R\colon \mod R\to \R^*,
\]
characterized by the fact that $\rk_R(R)=1$. Notice that the set of finite values of both these functions is $\N$, which is clearly a discrete subset of $\R$.

\subsection{Trajectories, anti-trajectories, and inert submodules}\label{inert_subs}

Let us start fixing some conventions for $R[X]$-modules. Indeed, we use the notation $M_\phi$, with $M$ an $R$-module and $\phi\in\End_R(M)$, to denote the $R[X]$-module $M_{R[X]}$, where $X$ acts on $M$ as $\phi$. An $R$-homomorphism $\alpha\colon M\to N$ induces an $R[X]$-homomorphism $M_\phi\to N_\psi$ if and only if $\alpha \cdot \phi = \psi \cdot \alpha$. For more details on these notions we refer to \cite{SVV}.

Given an $R[X]$-module $M_\phi$, in this section we introduce and study a class of $R$-submodules of $M$ called $\phi$-inert. This family of submodules is meant to substitute the family of $L_v$-finite submodules of $M$ in the definition of the valuation entropy $\ent_v$. For this reason we will proceed as follows: we first recall the definition of $\ent_v$ and the role played by the $L_v$-finite submodules and their trajectories; in the second part of the subsection we introduce the (valuation) inert submodules, setting the bases for defining the intrinsic valuation entropy $\ient$, and we use the notion of anti-trajectory to produce a new, better behaved, inert submodule from a given one.

\medskip
Fix an $R[X]$-module $M_\phi$. For any $R$-submodule $K$ of $M$ and any integer $n \geq 1$ we define  the {\em partial $n$-th trajectory} of $K$ as follows
\[
T_n(\phi, K) = K + \phi K + \phi^2 K +\ldots + \phi^{n-1}K.
\]
Similarly, the (full) {\em trajectory} of $K$ is the following submodule of $M$:
\[
T(\phi,K) = \bigcup_{n>0} T_n(\phi,K)=\sum_{n\geq 0}\phi^nK.
\]
Notice that $T(\phi,K)$ is the $R[X]$-submodule of $M_\phi$ generated by $K$. The trajectory of a cyclic submodule $xR\leq M$ is simply denoted by $T(\phi,x)$.

\smallskip
Following the general treatment of \cite{SZ} and \cite{SVV}, the ({\em algebraic}) {\em valuation entropy} $\ent_v(M_\phi)$  was defined in \cite{Z} as follows. Let 
\[
\Fin(M)=\{K\leq M:L_v(K)< \infty\}.
\] 
Then, given $K \in \Fin(M)$, set
\[
\ent_v(\phi,K) = \lim_{n \to \infty} \frac{L_v(T_n(\phi,K ))}{n};
\]
this limit exists finite by the well-known Fekete's Lemma. In fact, one can prove as in \cite[Prop.\,3.2]{SV}  that 
\begin{equation}\label{suggesting_inerts}
\ent_v(\phi,K)=\inf_n L_v\left(\frac{T_{n+1}(\phi, K)}{T_n(\phi, K)}\right).
\end{equation}
One finally defines the {\em valuation entropy} of $\phi$ as 
\[
\ent_v(M_\phi)= \sup_{K \in \Fin(M)} \ent_v(\phi,K).
\]
We will commonly use also the notation $\ent_v(\phi)$ to denote $\ent_v(M_\phi)$.

\smallskip
The formula \eqref{suggesting_inerts}, which has its analog for the classical algebraic entropy $\ent$ in the setting of Abelian groups, suggests that $\ent_v(\phi,K)$ can be defined not just for $L_v$-finite submodules, but for any submodule $K$ such that $L_v(T_{n+1}(\phi, K)/T_n(\phi, K))$ is finite for all $n\geq 1$. As it turns out, this happens precisely when $L_v((K+\phi K)/K)$ is finite. This motivates the following definition.

\begin{definition}
In the above setting, an $R$-submodule $K$ of $M$ is  {\em (valuation) $\phi$-inert} provided 
$L_v((K+\phi K)/K)<\infty$.
\end{definition}

The family of all the $\phi$-inert submodules of $M$ is denoted by $\mathcal I_\phi (M)$.
Proceeding as in \cite[Lem.\,2.1]{DGSV}, one can verify that, if $H$ is $\phi$-inert in $M$, then $L_v(T_n(\phi, H )/H) < \infty$ for all $n\geq 1$ and, consequently, $T(\phi, H )/H$ is a torsion module. This fact will play a crucial role in the proof of Proposition \ref{F,F'}, a main step in proving the super-additivity of the intrinsic valuation entropy.

\begin{lemma} \label{restriction}
Let $\phi: M \to M$ be an  endomorphism of an $R$-module $M$, $H$ a $\phi$-invariant submodule and $K$ a $\phi$-inert submodule of $M$. Then $K \cap H$ is $\phi_{ \restriction H}$-inert in $H$.
\end{lemma}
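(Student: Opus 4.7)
The plan is to reduce the claim to a simple containment plus the monotonicity of $L_v$ on submodules of a fixed $L_v$-finite module.

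First I would unpack what has to be shown: since $K\cap H\subseteq H$ and $\phi$ restricts to $H$, we have $\phi_{\restriction H}(K\cap H)=\phi(K\cap H)$, so the inertness of $K\cap H$ in $H$ amounts to the finiteness of
\[
L_v\!\left(\frac{(K\cap H)+\phi(K\cap H)}{K\cap H}\right).
\]
Using $\phi(K\cap H)\subseteq \phi K$ together with $\phi$-invariance of $H$ (which gives $\phi(K\cap H)\subseteq H$), the numerator is contained in $(K+\phi K)\cap H$, so this quotient embeds as an $R$-submodule of $((K+\phi K)\cap H)/(K\cap H)$.

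The key step is then to produce an injection
\[
\frac{(K+\phi K)\cap H}{K\cap H}\hookrightarrow \frac{K+\phi K}{K},
\]
obtained by composing the inclusion $(K+\phi K)\cap H\hookrightarrow K+\phi K$ with the quotient map onto $(K+\phi K)/K$; the kernel of this composition is exactly $(K+\phi K)\cap H\cap K=K\cap H$.

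Finally, since $L_v$ is a length function, it is additive, hence monotone on subobjects: from the last injection and the hypothesis that $K$ is $\phi$-inert I obtain
\[
L_v\!\left(\frac{(K\cap H)+\phi(K\cap H)}{K\cap H}\right)\le L_v\!\left(\frac{(K+\phi K)\cap H}{K\cap H}\right)\le L_v\!\left(\frac{K+\phi K}{K}\right)<\infty,
\]
which gives the claim. There is no real obstacle here; the only thing worth double-checking is that the $\phi$-invariance of $H$ is used precisely to place $\phi(K\cap H)$ inside $H$, so that the containment $(K\cap H)+\phi(K\cap H)\subseteq (K+\phi K)\cap H$ actually holds.
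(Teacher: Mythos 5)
Your proof is correct and follows essentially the same route as the paper: both arguments reduce the claim to exhibiting an embedding of $\bigl((K\cap H)+\phi(K\cap H)\bigr)/(K\cap H)$ into $(K+\phi K)/K$ via the second isomorphism theorem, and then invoke monotonicity of $L_v$; the paper identifies the image precisely as $(K+\phi(K\cap H))/K$ while you pass through the slightly larger $\bigl((K+\phi K)\cap H\bigr)/(K\cap H)$, but this is a cosmetic difference.
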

\begin{proof}
Consider the following isomorphisms
$$\frac{(K \cap H) + \phi (K\cap H)}{K \cap H} \cong \frac{\phi (K\cap H)}{\phi (K\cap H) \cap K \cap H} \cong \frac{K+ \phi (K\cap H)}{K}.$$
The last term is a submodule of $(K+\phi K)/K$, therefore $L_v\left(\frac{(K \cap H) + \phi (K\cap H)}{K \cap H}\right) < \infty$.
\end{proof}

The next technical lemma will be useful in the proofs of Popositions \ref{freeinfr} and \ref{entropy_fg_coro_wrt_tors}.

\begin{lemma} \label{Kinert}
Let $M_\phi$ be an $R[X]$-module such that $\rk_R(M)=k<\infty$. Then  any $R$-submodule $K\leq M$ such that $K/tK\cong R^k$ and $L_v(tK)<\infty$ is $\phi$-inert. In particular, if $M$ is torsion-free and $F$ is a free submodule of maximal rank, then $F$ is $\phi$-inert.
\end{lemma}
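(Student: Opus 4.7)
My plan is to exploit the hypothesis $K/tK \cong R^k$ to split $K$ as an internal direct sum of a free submodule and its torsion part, and then to control $(K+\phi K)/K$ by breaking it into pieces along this decomposition.

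First, I would lift a free basis $\bar e_1, \ldots, \bar e_k$ of $K/tK$ to elements $e_1,\ldots,e_k \in K$ and set $F_0 = Re_1+\cdots+Re_k$. Standard checks using the freeness of $K/tK$ yield $F_0 \cong R^k$, $F_0 \cap tK = 0$ and $F_0 + tK = K$, so $K = F_0 \oplus tK$ and $\phi K = \phi F_0 + \phi(tK)$. Next, I would use the short exact sequence
\[
0 \to \frac{K+\phi F_0}{K} \to \frac{K+\phi K}{K} \to \frac{K+\phi K}{K+\phi F_0} \to 0
\]
and the additivity of $L_v$ to reduce the problem to bounding the valuation length of each of the outer two terms.

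The right-hand term is a quotient of $\phi(tK)$, so its valuation length is at most $L_v(\phi(tK)) \le L_v(tK) < \infty$ by hypothesis. For the left-hand term, note that $(K+\phi F_0)/K \cong \phi F_0/(\phi F_0 \cap K)$ is finitely generated, being a quotient of the finitely generated module $\phi F_0 = R\phi(e_1) + \cdots + R\phi(e_k)$. Moreover, since $\rk_R(K) = k = \rk_R(M)$, the quotient $M/K$ has rank zero, so it is torsion; in particular $(K+\phi F_0)/K$ is a finitely generated torsion $R$-module, and as remarked earlier in the paper such modules have finite valuation length. Combining these, $L_v((K+\phi K)/K) < \infty$, so $K$ is $\phi$-inert.

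The ``in particular'' statement then follows trivially: for $M$ torsion-free and $F\le M$ free of maximal rank $k$, one has $tF = 0$ and $F/tF = F \cong R^k$, so the hypothesis applies verbatim. I do not expect a real obstacle in this proof; the one step worth highlighting is the use of the rank equality $\rk_R(K) = \rk_R(M)$ to pass from ``$(K+\phi F_0)/K$ is finitely generated'' to ``finitely generated torsion'', which is precisely what makes the Northcott--Reufel finiteness result applicable.
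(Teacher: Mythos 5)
Your proof is correct, and it uses a meaningfully different decomposition of $(K+\phi K)/K$ than the paper does. The paper splits along the torsion filtration: it writes the short exact sequence
\[
0\to\frac{t(K+\phi K)}{tK}\to \frac{K+\phi K}{K}\to \frac{(K+\phi K)+tM}{K+tM}\to 0,
\]
handles the right-hand term as a finitely generated torsion quotient of $R^k$, and handles the left-hand term by explicitly computing $t(K+\phi K)=tK+\phi(tK)+t(F+\phi F)$ and bounding each summand (the last one via Lemma~\ref{fin_gen_tor_fin}). You instead split along the decomposition $K=F_0\oplus tK$ by inserting the intermediate submodule $K+\phi F_0$, and control the two outer terms separately: $(K+\phi F_0)/K$ is finitely generated torsion by the rank count $\rk_R(K)=k=\rk_R(M)$, and $(K+\phi K)/(K+\phi F_0)$ is a quotient of $\phi(tK)$, hence of valuation length at most $L_v(tK)$. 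Both arguments rely on the same underlying facts (additivity of $L_v$, subadditivity under quotients, and that finitely generated torsion modules have finite valuation length), but yours avoids the explicit identification of $t(K+\phi K)$ and the use of Lemma~\ref{fin_gen_tor_fin}, so it is somewhat leaner. The one step I would make explicit, which you correctly flag, is the use of the rank additivity of $\rk_R$ as a length function to conclude $M/K$ is torsion.
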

\begin{proof}
Since $K/tK\cong R^k$, we get $K=tK\oplus F$, where $F \cong R^k$.
Consider the following short exact sequence
\[
0\to\frac{t(K+\phi K)}{tK}\to \frac{K+\phi K}{K}\to \frac{(K+\phi K)+tM}{K+tM}\to 0 \]
where the module on the right-hand side is $L_v$-finite, since it is torsion ($K+\phi K+tM$ and $K+tM$ have the same rank) and finitely generated (it is isomorphic to a quotient of $R^k$). On the other hand, to prove that the module on the left-hand side is $L_v$-finite, it is enough to show that $L_v(t(K+\phi K))<\infty$, since by hypothesis $L_v(tK)<\infty$.
But $t(K+\phi K)=tK +\phi(tK)+t(F+\phi F)$. The first two summands are $L_v$-finite, while the third summand, as the torsion part of a finitely generated module, is also $L_v$-finite by Lemma \ref{fin_gen_tor_fin}. Then the conclusion follows.
\end{proof}

We  are now  going to introduce a construction that takes as input a given inert submodule $K$ and produces as output a bigger inert submodule $A(\phi,K)$ with better properties. This procedure is dual to a construction introduced by Willis in \cite{Willis_endo}, in studying the scale function of continuous endomorphisms of totally disconnected locally compact groups. 
Indeed, for an $R[X]$-module $M_{\phi}$ and an $R$-submodule $K\leq M$, we define, by induction on $n\geq 1$, the {\em (partial) $n$-th anti-trajectory of $K$} as follows:
\begin{enumerate}[\rm --]
\item $A_1(\phi,K)=K$;
\item $A_{n+1}(\phi,K)=K+\phi^{-1} A_n(\phi,K)$.
\end{enumerate}
The {\em (full) anti-trajectory of $K$} is then defined by
\[
A(\phi,K)=\bigcup_{n}A_n(\phi,K).
\]
In the following lemma we show that there is a close relationship between trajectories and anti-trajectories of a given submodule. 

\begin{lemma}\label{lem_K_n}
In the above notation, the following statements hold true:
\begin{enumerate}[\rm (1)]
\item $\phi^{n}(\phi^{-1}A_n(\phi,K))\cap \phi^{n}K= T_{n}(\phi,K)\cap \phi^{n}K$;
\item $T_{n+1}(\phi,K)/T_{n}(\phi,K)\overset{}{\cong} {A_{n+1}(\phi,K)}/{\phi^{-1}A_{n}(\phi,K)}$, for all $n\geq 1$.
\end{enumerate}
\end{lemma}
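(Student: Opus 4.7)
My plan is to derive both parts from a single explicit description of the anti-trajectories, namely the identity
\[
A_n(\phi, K) = \phi^{-(n-1)}\bigl(T_n(\phi, K)\bigr) \quad \text{for all } n \geq 1. \tag{$\star$}
\]
I would prove $(\star)$ by induction on $n$. The base case $A_1 = K = T_1$ is immediate. For the inductive step, the recursion $A_{n+1} = K + \phi^{-1}(A_n)$ together with the inductive hypothesis gives $A_{n+1} = K + \phi^{-n}(T_n)$, so it suffices to show this equals $\phi^{-n}(T_{n+1})$. The forward inclusion is clear since $\phi^n(K) \subseteq T_{n+1}$. For the reverse, any $x \in \phi^{-n}(T_{n+1})$ satisfies $\phi^n(x) = t + \phi^n(k)$ for some $t \in T_n$ and $k \in K$ (using $T_{n+1} = T_n + \phi^n K$), whence $x - k \in \phi^{-n}(T_n)$ and $x \in K + \phi^{-n}(T_n)$.

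For (1), I would invoke the standard preimage identity $\phi^n(\phi^{-n}(S)) = S \cap \phi^n(M)$, valid for any submodule $S \leq M$. By $(\star)$, we have $\phi^{-1}A_n = \phi^{-n}(T_n)$, so
\[
\phi^n(\phi^{-1}A_n) = T_n \cap \phi^n(M).
\]
Intersecting both sides with $\phi^n K$ and noting $\phi^n K \subseteq \phi^n(M)$ yields (1) at once.

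For (2), I would define the $R$-linear map $\alpha\colon A_{n+1} \to T_{n+1}/T_n$ by $\alpha(x) = \phi^n(x) + T_n$; this is well-defined because, by $(\star)$, $\phi^n(A_{n+1}) = T_{n+1} \cap \phi^n(M) \subseteq T_{n+1}$. Surjectivity follows from $T_{n+1} = T_n + \phi^n K$ together with $K \subseteq A_{n+1}$. The kernel is
\[
\{x \in A_{n+1} : \phi^n(x) \in T_n\} = A_{n+1} \cap \phi^{-n}(T_n) = \phi^{-n}(T_n) = \phi^{-1}(A_n),
\]
where the penultimate equality uses $\phi^{-1}(A_n) \subseteq A_{n+1}$ directly from the definition of $A_{n+1}$. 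Applying the first isomorphism theorem gives (2).

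The main (and essentially only) obstacle is spotting the description $(\star)$; once that is in hand, both statements reduce to routine manipulations with images and preimages of submodules under the iterates of $\phi$.
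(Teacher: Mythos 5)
Your proof is correct, and it takes a genuinely different route from the paper's. The paper proves part (1) by a hands-on double inclusion: one direction by an induction showing $\phi^{n}(\phi^{-1}A_n(\phi,K)) \subseteq T_n(\phi,K)$, and the other by peeling an element of $T_n(\phi,K)\cap\phi^n K$ apart coefficient by coefficient to exhibit it as an element of $\phi^n(\phi^{-1}A_n(\phi,K))$. Then part (2) is deduced from (1) via a chain of second-isomorphism-theorem manipulations together with the observation that $\phi^n$ induces an isomorphism $A_{n+1}(\phi,K)/\phi^{-1}A_n(\phi,K)\cong \phi^n(A_{n+1}(\phi,K))/\phi^n(\phi^{-1}A_n(\phi,K))$ because $\ker(\phi^n)\subseteq\phi^{-1}A_n(\phi,K)$. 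What you do instead is to isolate the clean structural identity $A_n(\phi,K)=\phi^{-(n-1)}(T_n(\phi,K))$ by an easy induction, and then both parts become one-line consequences of the standard preimage rule $\phi^n(\phi^{-n}(S))=S\cap\Im(\phi^n)$ and the first isomorphism theorem. Your route is more conceptual and modular: $(\star)$ is exactly the link between trajectories and anti-trajectories that the paper uses only implicitly, and once stated it does all the work. The paper's route avoids stating $(\star)$ and is somewhat longer but is still entirely elementary; the two arguments are of comparable total length. Your version has the small advantage that the kernel computation is transparent rather than relying on the injectivity of the map induced by $\phi^n$.
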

\begin{proof}
(1) Let us start proving that, for all $n\geq 1$, $\phi^{n}(\phi^{-1}A_{n}(\phi,K))\subseteq T_{n}(\phi,K)$. We proceed by induction on $n$. For $n=1$ we get $\phi(\phi^{-1}K)=K\cap \Im(\phi)\subseteq K$. For $n> 1$,
\begin{align*}
\phi^{n}(\phi^{-1}A_{n}(\phi,K))&=\phi^{n-1}(A_{n}(\phi,K))\cap \Im(\phi)\subseteq \phi^{n-1} A_{n}(\phi,K)\\
&=\phi^{n-1}K+\phi^{n-1}(\phi^{-1}A_{n-1}(\phi,K))\\
&\subseteq \phi^{n-1}K+T_{n-1}(\phi,K)=T_{n}(\phi,K).
\end{align*}
On the other hand, a generic element $x\in T_{n}(\phi,K)\cap \phi^{n}K$ can be written as $x=\phi^{n}k_{n}=k_0+\phi k_1+\ldots+\phi^{n-1}k_{n-1}$, for $k_0,k_1,\ldots,k_{n}\in K$. Hence, the equality $\phi(\phi^{n-1}k_{n}-\phi^{n-2}k_{n-1}-\ldots-k_1)=k_0$ implies that
\[
\phi^{n-1}k_{n}-\phi^{n-2}k_{n-1}-\ldots-k_1\in \phi^{-1}K.
\]
Furthermore, the fact that $\phi(\phi^{n-2}k_{n}-\phi^{n-3}k_{n-1}-\ldots-k_2)\in k_1+\phi^{-1}K\subseteq K+\phi^{-1}K$, implies that
\[
\phi^{n-2}k_{n}-\phi^{n-3}k_{n-1}-\ldots-k_2\in \phi^{-1}(K+\phi^{-1}K)
\]
Going on this way, one proves that $k_{n}\in \phi^{-1}A_{n}(\phi,K)$, and so $x=\phi^{n}k_{n}\in \phi^{n}K\cap \phi^{n}(\phi^{-1}A_{n}(\phi,K))$. This show that $\phi^{n}(\phi^{-1}A_n(\phi,K))\cap \phi^{n}K\supseteq T_{n}(\phi,K)\cap \phi^{n}K$, so we are done.

\smallskip\noindent
(2) This is a consequence of part (1); in fact,
\begin{align*}
\frac{T_{n+1}(\phi,K)}{T_{n}(\phi,K)}&\cong \frac{\phi^{n}K}{T_{n}(\phi,K)\cap \phi^{n}K}\\
&= \frac{\phi^{n}K}{\phi^{n}(\phi^{-1}A_{n}(\phi,K))\cap \phi^{n}K}\\
&\cong \frac{\phi^{n}(K+\phi^{-1}A_{n}(\phi,K))}{\phi^{n}(\phi^{-1}A_{n}(\phi,K))}\\
&\cong \frac{\phi^{n}(A_{n+1}(\phi,K))}{\phi^{n}(\phi^{-1}A_{n}(\phi,K))} \ .
\end{align*}
To conclude we should just prove that the following map, induced by $\phi^n$:
\[
f\colon\frac{A_{n+1}(\phi,K)}{\phi^{-1}A_{n}(\phi,K)}\to \frac{\phi^{n}(A_{n+1}(\phi,K))}{\phi^{n}(\phi^{-1}A_{n}(\phi,K))}
\]
is an isomorphism. In fact, it is clearly surjective, while its injectivity follows from the fact that $\ker(\phi^{n})\subseteq \phi^{-n}K\subseteq \phi^{-1}A_{n}(\phi,K)$.
\end{proof}

We describe in the next proposition the trajectories of $A(\phi,K)$, showing that $A(\phi,K)$ is $\phi$-inert provided $K$ has the same property. 

\begin{proposition}\label{prop_K}
In the above notation, the following statements hold true:
\begin{enumerate}[\rm (1)]
\item $\phi^{-1}A(\phi,K)\subseteq A(\phi,K)$, so $A(\phi,K)\cap \phi A(\phi,K)=A(\phi,K)\cap \Im(\phi)$ and, more generally, for every $n \geq 1$, $\phi^{n-1}A(\phi,K)\cap \phi^n A(\phi,K)=\phi^{n-1}A(\phi,K)\cap \Im(\phi^n)$;
\item $T_n(\phi,A(\phi,K))\cap \phi^n A(\phi,K)=\phi^{n-1}(\phi A(\phi,K)\cap A(\phi,K))$, for all $n\geq 1$;
\item $T_{n+1}(\phi,A(\phi,K))/T_{n}(\phi,A(\phi,K))\cong A(\phi,K)/\phi^{-1}A(\phi,K)$, for all $n\geq 1$;
\item if $K$ is $\phi$-inert, then so is $A(\phi,K)$.
\end{enumerate}
\end{proposition}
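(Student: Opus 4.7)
\medskip\noindent
\textbf{Proof plan for Proposition \ref{prop_K}.} Set $L:=A(\phi,K)$ throughout. For (1), I would argue directly from the construction: if $x\in\phi^{-1}L$, then $\phi(x)\in A_n(\phi,K)$ for some $n$, whence $x\in\phi^{-1}A_n(\phi,K)\subseteq K+\phi^{-1}A_n(\phi,K)=A_{n+1}(\phi,K)\subseteq L$. Once we know $\phi^{-1}L\subseteq L$, every preimage of an element of $L$ lies in $L$, so $L\cap\Im(\phi)\subseteq \phi(L)$ and hence $L\cap\phi L=L\cap\Im(\phi)$. Applying the same reasoning to $\phi^n$ (noting $\phi^{-n}L\subseteq\phi^{-1}L$, which follows by iterating $\phi^{-1}L\subseteq L$) I would derive the more general equality $\phi^{n-1}L\cap\phi^n L=\phi^{n-1}L\cap\Im(\phi^n)$.

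For (2) and (3), the plan is to exploit Lemma \ref{lem_K_n} with $L$ playing the role of $K$. The advantage is that, thanks to (1), one shows by induction that $A_n(\phi,L)=L$ for every $n\geq 1$: indeed $A_{n+1}(\phi,L)=L+\phi^{-1}A_n(\phi,L)=L+\phi^{-1}L=L$. Hence Lemma \ref{lem_K_n}(1) gives
\[
T_n(\phi,L)\cap\phi^n L=\phi^n(\phi^{-1}L)\cap\phi^n L=\phi^{n-1}(L\cap\Im(\phi))\cap\phi^n L.
\]
Using part (1), $L\cap\Im(\phi)=L\cap\phi L$, and $\phi^{n-1}(L\cap\phi L)\subseteq\phi^n L$ automatically, yielding (2). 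For (3), the second isomorphism theorem gives $T_{n+1}(\phi,L)/T_n(\phi,L)\cong \phi^n L/(T_n(\phi,L)\cap\phi^n L)=\phi^n L/\phi^n(\phi^{-1}L)$, and I would then verify that the surjection $L/\phi^{-1}L\twoheadrightarrow \phi^n L/\phi^n(\phi^{-1}L)$ induced by $\phi^n$ is also injective.

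Part (4) is then cheap: specialising (3) to $n=1$ gives $(L+\phi L)/L\cong L/\phi^{-1}L$. Since $L=K+\phi^{-1}L$ (clear from the recursive definition of the $A_n$), the latter quotient equals $(K+\phi^{-1}L)/\phi^{-1}L\cong K/(K\cap \phi^{-1}L)$, which is a quotient of $K/(K\cap\phi^{-1}K)$. Finally $\phi$ induces an isomorphism $K/(K\cap\phi^{-1}K)\cong (K+\phi K)/K$, and the right-hand side is $L_v$-finite by hypothesis on $K$.

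The main obstacle I expect is the injectivity step in (3). What is needed is $\ker(\phi^n)\subseteq\phi^{-1}L$: if $\phi^n(\ell)=\phi^n(m)$ with $m\in\phi^{-1}L$ then $\ell-m\in\ker(\phi^n)\subseteq\phi^{-n}(0)\subseteq\phi^{-n}L$, and one must iterate $\phi^{-1}L\subseteq L$ (which the preimage operator preserves, so $\phi^{-k-1}L\subseteq\phi^{-k}L$) in order to conclude that $\phi^{-n}L\subseteq\phi^{-1}L$. Once this telescoping is recorded, $\ell\equiv m\pmod{\phi^{-1}L}$ and the three statements (2)--(4) all flow mechanically from (1) together with Lemma \ref{lem_K_n}.
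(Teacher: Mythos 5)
Your proof is correct, and the route through Lemma \ref{lem_K_n} is a genuine simplification over what the paper does for part (2). Setting $L=A(\phi,K)$, your key observation is that $A_n(\phi,L)=L$ for all $n$ (immediate from $\phi^{-1}L\subseteq L$, which is part (1)); this lets you read off $T_n(\phi,L)\cap\phi^nL=\phi^n(\phi^{-1}L)$ directly from Lemma \ref{lem_K_n}(1), whereas the paper re-runs the whole element-chasing argument of that lemma from scratch to prove (2). In effect you are recycling work that has already been done, which is cleaner. (You could push this further: Lemma \ref{lem_K_n}(2) applied to $L$ with $A_n(\phi,L)=L$ \emph{is} statement (3) verbatim, so you do not even need the detour through the second isomorphism theorem and the injectivity check for the map induced by $\phi^n$ --- though what you wrote is also correct, using the telescoping $\phi^{-n}L\subseteq\cdots\subseteq\phi^{-1}L$ to control $\ker(\phi^n)$.) For (4) both arguments turn on the identity $L=K+\phi^{-1}L$; the paper applies $\phi$ to that identity and compares $\phi K+L$ with $\phi L+L$, while you instead invoke (3) at $n=1$ to trade $(L+\phi L)/L$ for $L/\phi^{-1}L\cong K/(K\cap\phi^{-1}L)$, then observe that this is an epimorphic image of $K/(K\cap\phi^{-1}K)\cong(K+\phi K)/K$, which is $L_v$-finite by hypothesis. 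Both routes are sound; yours has the small aesthetic advantage of making (4) formally a corollary of (3).
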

\begin{proof}
\smallskip
(1) By definition, $\phi^{-1}A_n(\phi,K)\subseteq A_{n+1}(\phi,K)\subseteq A(\phi,K)$, for all $n\geq 1$. Thus, 
\[
\phi^{-1}A(\phi,K)=\phi^{-1}\left(\bigcup_{n\geq 1}A_n(\phi,K)\right)=\bigcup_{n\geq 1}\phi^{-1}A_n(\phi,K)\subseteq A(\phi, K).
\]
The inclusion $A(\phi,K)\cap \phi A(\phi,K)\subseteq A(\phi,K)\cap \Im(\phi)$ is obvious; the converse  follows from the inclusion  $\phi(\phi^{-1}A(\phi,K))\subseteq \phi(A(\phi,K))$ and $\phi(\phi^{-1}A(\phi,K))=A(\phi,K) \cap \Im(\phi)$. A similar argument proves that $\phi^{n-1}A(\phi,K)\cap \phi^n A(\phi,K)=\phi^{n-1}A(\phi,K)\cap \Im(\phi^n)$.

\smallskip\noindent
(2) The inclusion $\supseteq$ is clear. For the converse inclusion, let $x=\phi^{n-1}k_{n-1}+\ldots+\phi k_1+k_0=\phi^n k_n\in T_n(\phi,A(\phi,K))\cap \phi^n A(\phi,K)$ ($k_i \in A(\phi,K)$), and let us show that $x\in \phi^{n-1}(\phi A(\phi,K)\cap A(\phi,K))$. Since $\ker(\phi^{n-1})\subseteq \phi^{-n+1}A(\phi,K)\subseteq A(\phi,K)$, we get
\[
\phi^{n-1}(\phi A(\phi,K)\cap A(\phi,K))=\phi^{n}A(\phi,K)\cap \phi^{n-1}A(\phi,K).
\]
Hence, it is enough to show that $x\in \phi^{n-1}A(\phi,K)$. Indeed, the equality
\[
k_0=\phi^nk_n-(\phi^{n-1}k_{n-1}+\ldots+\phi k_1)
\]
shows that $k_0\in A(\phi,K)\cap \Im(\phi)=A(\phi,K)\cap \phi A(\phi,K)$. Hence, there is $k'_{1}\in A(\phi,K)$ such that $\phi k'_{1}=\phi k_1+ k_0$. Hence,
\[
\phi k'_{1}=\phi^nk_n-(\phi^{n-1}k_{n-1}+\ldots+\phi^2 k_2)
\]
that is, $\phi(k'_{1})\in \phi A(\phi,K)\cap \Im(\phi^2)=\phi A(\phi,K)\cap \phi^2(A(\phi,K))$. Hence, there is $k_2'\in A(\phi,K)$, such that $\phi^2 k_2'=\phi^2 k_2+ \phi k'_{1}=\phi^2k_2+\phi k_1+k_0$. Going on this way, we obtain an element $k'_{n-1}\in A(\phi,K)$ such that
\[
\phi^{n-1}k'_{n-1}=\phi^{n-1}k_{n-1}+\ldots+\phi k_1+k_0=\phi^nk_n
\]
concluding the proof of our claim.

\smallskip\noindent
(3) Consider the following sequence of isomorphisms
\begin{align*}
\frac{T_{n+1}(\phi,A(\phi,K))}{T_{n}(\phi,A(\phi,K))}&\cong \frac{\phi^n A(\phi,K)}{\phi^n A(\phi,K)\cap T_n(\phi,A(\phi,K))}\\
&=\frac{\phi^n A(\phi,K)}{\phi^{n-1}(\phi A(\phi,K)\cap A(\phi,K))}\\
&\cong \frac{\phi^{n-1} (\phi A(\phi,K)+ A(\phi,K))}{\phi^{n-1} A(\phi,K)} \\
&\cong \frac{\phi A(\phi,K)+ A(\phi,K)}{ A(\phi,K)} \\
\end{align*}
where the last isomorphism is induced by $\phi^{n-1}$, using that $\ker(\phi^{n-1})\subseteq A(\phi,K)$. Finally, by part (1), 
\[
\frac{\phi A(\phi,K)+ A(\phi,K)}{ A(\phi,K)} \cong \frac{\phi A(\phi,K)}{A(\phi,K)\cap \phi A(\phi,K)}=\frac{\phi A(\phi,K)}{A(\phi,K)\cap \Im(\phi)}\cong \frac{A(\phi,K)}{\phi^{-1}A(\phi,K)}
\]
where the last isomorphism is induced by $\phi$.

\smallskip\noindent
(4) Notice that $K+\phi^{-1}A(\phi,K)= A(\phi,K)$. In fact, $K+\phi^{-1}A(\phi,K)\subseteq A(\phi,K)$ by the first part, while for all $n\in\N$, $A_{n+1}(\phi,K)=K+\phi^{-1}A_n(\phi,K)\subseteq K+\phi^{-1}A(\phi,K)$. Applying $\phi$ to the equality $K+\phi^{-1}A(\phi,K)= A(\phi,K)$, we obtain that $\phi K+ (A(\phi,K)\cap \Im(\phi))=\phi A(\phi,K)$; in particular, $\phi A(\phi,K)+A(\phi,K)=\phi K+A(\phi,K)$. Therefore 
\[
\frac{A(\phi,K)+\phi A(\phi,K)}{A(\phi,K)}=\frac{\phi K+A(\phi,K)}{A(\phi,K)}\cong \frac{\phi K}{\phi K\cap A(\phi,K)}
\]
is an epic image of 
\[
\frac{\phi K}{\phi K\cap K}\cong \frac{K+\phi K}{K}
\]
showing that $(A(\phi,K)+\phi A(\phi,K))/A(\phi,K)$ has finite valuation length, that is, $A(\phi,K)$ is $\phi$-inert.
\end{proof}

\subsection{The intrinsic valuation entropy}

Let us begin this subsection defining the main object of study for this paper.

\begin{definition}
Consider an $R[X]$-module $M_\phi$, and let $H \in \mathcal I_\phi (M)$. The {\rm intrinsic valuation entropy of $\phi$ with respect to $H$} is 
\[
\ient(\phi,H)=\lim_{n \to \infty}  \frac{L_v(T_n(\phi, H )/H)}{n}.
\]
The  {\rm intrinsic valuation entropy of $M_\phi$} is then defined as
\[
\ient(M_\phi)=\sup_{H \in \mathcal I_\phi (M)} \ient(\phi,H).
\]
We will commonly use also the notation $\ient(\phi)$ to denote $\ient(M_\phi)$.
\end{definition}

Mixing the proofs of  \cite[Lem.\,3.2]{DGSV} and \cite[Prop.\,3.2]{SV}, we obtain the following

\begin{proposition}\label{inf}
Given an $R[X]$-module $M_\phi$ and $H \in \mathcal I_\phi (M)$, 
\[
\ient(\phi,H)=\inf_n L_v\left(\frac{T_{n+1}(\phi, H )}{T_n(\phi, H )}\right).
\]
\end{proposition}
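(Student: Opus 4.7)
The plan is to reduce the statement to a clean monotonicity-plus-Ces\`aro argument on real sequences. Set $a_n = L_v(T_n(\phi,H)/H)$ and $b_n = L_v(T_{n+1}(\phi,H)/T_n(\phi,H))$. Because $H$ is $\phi$-inert, the observation recorded after Definition 2.2 (and proved exactly as in \cite[Lem.\,2.1]{DGSV}) gives $a_n<\infty$ for all $n$, so in particular $b_n<\infty$. Applying additivity of $L_v$ to the short exact sequence
\[
0\to T_n(\phi,H)/H\to T_{n+1}(\phi,H)/H\to T_{n+1}(\phi,H)/T_n(\phi,H)\to 0
\]
yields $b_n = a_{n+1}-a_n$, and since $a_1=L_v(H/H)=0$, a telescoping sum gives $a_{n+1}=b_1+b_2+\cdots+b_n$. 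Thus $\ient(\phi,H)=\lim_n a_n/n$ is the Ces\`aro limit of the sequence $(b_n)$.

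The crucial step is to show that $(b_n)_{n\geq 1}$ is non-increasing; once this is established the desired identity $\lim_n a_n/n = \inf_n b_n$ is automatic, since for a non-increasing sequence of non-negative reals bounded above by $b_1$ the Ces\`aro means converge to the infimum. To prove $b_{n+1}\leq b_n$, observe that $\phi\bigl(T_n(\phi,H)\bigr)=\phi H+\phi^2 H+\cdots+\phi^n H\subseteq T_{n+1}(\phi,H)$, so $\phi$ induces a well-defined $R$-homomorphism
\[
\bar\phi\colon T_{n+1}(\phi,H)/T_n(\phi,H)\longrightarrow T_{n+2}(\phi,H)/T_{n+1}(\phi,H).
\]
Moreover $\bar\phi$ is surjective: since $T_{n+2}(\phi,H)=T_{n+1}(\phi,H)+\phi\bigl(T_{n+1}(\phi,H)\bigr)$, every class in the target has a representative in $\phi\bigl(T_{n+1}(\phi,H)\bigr)$. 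Monotonicity of $L_v$ along a surjection gives $b_{n+1}\leq b_n$.

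Combining the two paragraphs, $(b_n)$ is a non-increasing sequence of real numbers with $b_1<\infty$, so $\lim_n b_n=\inf_n b_n$; by the Ces\`aro theorem applied to $a_{n+1}=\sum_{k=1}^n b_k$ we conclude
\[
\ient(\phi,H)=\lim_{n\to\infty}\frac{a_n}{n}=\inf_{n}b_n=\inf_n L_v\!\left(\frac{T_{n+1}(\phi,H)}{T_n(\phi,H)}\right),
\]
as claimed. The only non-routine point is the surjection $\bar\phi$ witnessing monotonicity of $(b_n)$; everything else is a formal consequence of additivity of $L_v$ and the definition of $\phi$-inertness, exactly paralleling the mixture of \cite[Lem.\,3.2]{DGSV} and \cite[Prop.\,3.2]{SV} pointed out before the statement.
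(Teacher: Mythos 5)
Your proof is correct and takes essentially the same approach as the paper: both hinge on the monotonicity of $b_n = L_v(T_{n+1}(\phi,H)/T_n(\phi,H))$, together with finiteness of each $L_v(T_n(\phi,H)/H)$ from $\phi$-inertness, and then pass from the telescoped sum to $\inf_n b_n$. The only cosmetic differences are that the paper cites \cite{SZ} for the fact that $T_{n+1}/T_n$ is a quotient of $T_n/T_{n-1}$ (which you re-prove directly via the surjection $\bar\phi$), and the paper carries out the $\epsilon$-estimate explicitly instead of invoking the Ces\`aro theorem.
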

\begin{proof}
In order to simplify notation set $T_n(\phi, H )=T_n$ for all $n \geq 1$. As proved in \cite{SZ}, $T_{n+1}/T_n$ is a quotient of $T_n/T_{n-1}$ for all $n>1$, hence 
\[L_v((T_{n+1}/H)/(T_n/H)) \leq L_v((T_n/H)/(T_{n-1}/H)).\]
Let $\alpha =\inf_n  L_v((T_n/H)/(T_{n-1}/H))$ and fix (arbitrarily) $\epsilon>0$. Then there exists an index $n_0$ such that
\begin{equation*}\label{eq1}
L_v((T_n/H)/(T_{n-1}/H)) < \alpha + \epsilon
\end{equation*}
for all $n \geq n_0$. Using the additivity of $L_v$, one can prove by induction on $k$ that
$$L_v(T_{n_0+k}/H)=L_v(T_{n_0}/H)+\sum_{i \leq 0 \leq k-1}L_v((T_{n_0+i+1}/H)/(T_{n_0+i}/H))$$
which implies
\begin{equation}\label{eq2}
L_v(T_{n_0}/H)+k \alpha \leq L_v(T_{n_0+k}/H) \leq L_v(T_{n_0}/H)+k (\alpha + \epsilon).
\end{equation}
Using the second inequality in \eqref{eq2} we obtain:
\[
\ient(\phi,H)=\lim_{k \to \infty}\frac{L_v(T_{n_0+k}/H)}{n_0+k} \leq \lim_{k \to \infty}\frac{L_v(T_{n_0}/H)+k(\alpha + \epsilon)}{n_0+k} = \alpha + \epsilon.
\]
Being $\epsilon >0$ arbitrary, we deduce that $\widetilde{\ent}_v(\phi,H) \leq \alpha$. For the converse inequality, using the first inequality in \eqref{eq2} we obtain:
\begin{equation*}
\ient(\phi,H)=\lim_{k \to \infty}\frac{L_v(T_{n_0+k}/H)}{n_0+k}  \geq \lim_{k \to \infty}\frac{L_v(T_{n_0}/H)+ k \alpha}{n_0+k} = \alpha.\qedhere
\end{equation*} 
\end{proof}

\begin{corollary}\label{inf_coro}
Given an $R[X]$-module $M_\phi$ and $H \in \mathcal I_\phi (M)$, the following hold:
\begin{enumerate}[\rm (1)]
\item $\ient(\phi,H)=\ient(\phi,T_n(\phi,H))$ for all $n \geq 1$;
\item if $H\in \Fin(M)\subseteq \I_\phi(M)$, then $\ient(\phi,H)=\ent_v(\phi,H)$.
\end{enumerate}
\end{corollary}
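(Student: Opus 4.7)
The plan for part (1) is to reduce $\ient(\phi,T_n(\phi,H))$ to $\ient(\phi,H)$ through the telescoping identity $T_k(\phi,T_n(\phi,H))=T_{n+k-1}(\phi,H)$, which follows immediately by unwinding definitions (the exponents in $\sum_{j=0}^{k-1}\phi^j\sum_{i=0}^{n-1}\phi^i H$ range over $0,1,\ldots,n+k-2$). First I would check that $T_n(\phi,H)$ is itself $\phi$-inert, so that $\ient(\phi,T_n(\phi,H))$ is actually defined: since $T_n(\phi,H)+\phi T_n(\phi,H)=T_{n+1}(\phi,H)$ and the observation recalled after the definition of inertness gives $L_v(T_m(\phi,H)/H)<\infty$ for every $m$, additivity of $L_v$ yields $L_v(T_{n+1}(\phi,H)/T_n(\phi,H))<\infty$. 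Combining the telescoping identity with additivity of $L_v$ on
\[
0\to T_n(\phi,H)/H\to T_{n+k-1}(\phi,H)/H\to T_k(\phi,T_n(\phi,H))/T_n(\phi,H)\to 0,
\]
the quotient appearing in the definition of $\ient(\phi,T_n(\phi,H))$ has valuation length $L_v(T_{n+k-1}(\phi,H)/H)-L_v(T_n(\phi,H)/H)$. Dividing by $k$ and letting $k\to\infty$, the second (finite, constant) summand contributes $0$, while the first, rewritten as $\frac{L_v(T_{n+k-1}(\phi,H)/H)}{n+k-1}\cdot\frac{n+k-1}{k}$, tends to $\ient(\phi,H)\cdot 1$, yielding the desired equality.

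For part (2), I would first record the containment $\Fin(M)\subseteq\mathcal I_\phi(M)$: if $L_v(H)<\infty$ then $(H+\phi H)/H$ is a quotient of $\phi H$, itself a quotient of $H$, hence has finite valuation length. The equality $\ient(\phi,H)=\ent_v(\phi,H)$ then drops out of additivity of $L_v$ applied to $0\to H\to T_n(\phi,H)\to T_n(\phi,H)/H\to 0$, which gives $L_v(T_n(\phi,H))=L_v(H)+L_v(T_n(\phi,H)/H)$; dividing by $n$ and letting $n\to\infty$, the term $L_v(H)/n$ vanishes because $L_v(H)$ is a finite constant.

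Neither part poses a real obstacle; the only care required is keeping track of finiteness, so that the subtraction $L_v(A/B)=L_v(A/C)-L_v(B/C)$ used repeatedly does not degenerate into an $\infty-\infty$ indeterminacy. This is precisely what the hypotheses $H\in\mathcal I_\phi(M)$ in (1) and $H\in\Fin(M)$ in (2) guarantee.
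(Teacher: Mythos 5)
Your proposal is correct, but you take a different route than the paper. The paper derives both parts as "direct consequences" of Proposition \ref{inf} (and, for part (2), formula \eqref{suggesting_inerts}): once one knows that $\ient(\phi,H)=\inf_n L_v(T_{n+1}(\phi,H)/T_n(\phi,H))$ and that the same infimum computes $\ent_v(\phi,H)$ when $H\in\Fin(M)$, part (2) is immediate, and part (1) follows because the telescoping identity $T_k(\phi,T_n(\phi,H))=T_{n+k-1}(\phi,H)$ shows that $\ient(\phi,T_n(\phi,H))$ is the infimum of the same decreasing sequence over a tail, which coincides with the full infimum. You instead work directly from the limit definitions, using additivity of $L_v$ on the exact sequences $0\to T_n(\phi,H)/H\to T_{n+k-1}(\phi,H)/H\to T_k(\phi,T_n(\phi,H))/T_n(\phi,H)\to 0$ and $0\to H\to T_n(\phi,H)\to T_n(\phi,H)/H\to 0$, plus a Cesàro-type reindexing $\frac{L_v(T_{n+k-1}(\phi,H)/H)}{k}=\frac{L_v(T_{n+k-1}(\phi,H)/H)}{n+k-1}\cdot\frac{n+k-1}{k}$. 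Your version avoids invoking the infimum reformulation and instead relies only on the limit existing and on the finiteness guarantees supplied by inertness; the paper's version is shorter once Proposition \ref{inf} is in hand, and also makes the monotonicity of the sequence of successive quotient lengths do the work of your explicit limit manipulation. Both are sound; you also correctly supply the two small verifications (that $T_n(\phi,H)$ is $\phi$-inert, and that $\Fin(M)\subseteq\I_\phi(M)$) that the paper leaves implicit.
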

\begin{proof}
Both statements are direct consequences of Proposition \ref{inf}; for  (2) use also \eqref{suggesting_inerts}.
\end{proof}

The intrinsic valuation entropy $\widetilde{\ent}_v$ satisfies the following two typical properties of  algebraic entropies; since their proofs are mostly straightforward and very close to those of the analogous properties for other entropies, we leave them almost completely as exercises.

\begin{proposition}\label{conjugate}
Given an isomorphism of $R[X]$-modules $\alpha\colon M_\phi\to N_{\psi}$, so that $\psi=\alpha \phi \alpha^{-1}$, then $\ient(\phi)=\ient(\psi)$.
\end{proposition}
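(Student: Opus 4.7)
The plan is to transport inert submodules and their trajectories from $M_\phi$ to $N_\psi$ through the isomorphism $\alpha$, and then check that everything $L_v$-length related is preserved. The defining identity $\psi = \alpha\phi\alpha^{-1}$ is the only tool required, so the proof is essentially bookkeeping.

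First I would establish, by an easy induction on $k$, that $\alpha(\phi^k H) = \psi^k \alpha(H)$ for every $R$-submodule $H\leq M$ and every $k\geq 0$. Summing over $k=0,\ldots,n-1$ and using that $\alpha$ is $R$-linear gives
\[
\alpha(T_n(\phi,H)) \;=\; T_n(\psi,\alpha(H)) \qquad \text{for every } n\geq 1.
\]
In particular, with $n=2$, the restriction of $\alpha$ induces an $R$-isomorphism
\[
\frac{H+\phi H}{H} \;\cong\; \frac{\alpha(H)+\psi\alpha(H)}{\alpha(H)}.
\]
Since $L_v$ is an isomorphism invariant, this shows that $\alpha$ restricts to a bijection $\mathcal I_\phi(M) \to \mathcal I_\psi(N)$ between the classes of inert submodules.

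Next, for a fixed $H\in\mathcal I_\phi(M)$, the same displayed equality (for arbitrary $n$) yields isomorphisms
\[
\frac{T_n(\phi,H)}{H}\;\cong\;\frac{T_n(\psi,\alpha(H))}{\alpha(H)},
\]
so $L_v(T_n(\phi,H)/H) = L_v(T_n(\psi,\alpha(H))/\alpha(H))$ for all $n\geq 1$. Dividing by $n$ and taking the limit gives $\widetilde{\ent}_v(\phi,H) = \widetilde{\ent}_v(\psi,\alpha(H))$.

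Finally, because $H\mapsto \alpha(H)$ is a bijection $\mathcal I_\phi(M)\to\mathcal I_\psi(N)$ by the first step, taking the supremum over $H$ on the left and over $\alpha(H)$ on the right yields $\widetilde{\ent}_v(\phi)=\widetilde{\ent}_v(\psi)$. There is no genuine obstacle here: the only point that might look subtle is the bijectivity of $\alpha$ on the classes of inert submodules, but this is immediate from the isomorphism $(H+\phi H)/H \cong (\alpha H+\psi\alpha H)/\alpha H$ applied to both $\alpha$ and $\alpha^{-1}$.
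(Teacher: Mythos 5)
Your proof is correct. The paper explicitly leaves this proposition as an exercise (``since their proofs are mostly straightforward and very close to those of the analogous properties for other entropies, we leave them almost completely as exercises''), and the argument you give---transporting trajectories via $\alpha(T_n(\phi,H)) = T_n(\psi,\alpha(H))$, observing that $\alpha$ induces a bijection $\mathcal I_\phi(M)\to\mathcal I_\psi(N)$, and concluding by $L_v$-invariance of isomorphisms---is exactly the intended straightforward bookkeeping.
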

As usual, $\widetilde{\ent}_v$ can be viewed as a function $\mod{R[X]} \to \R^*$, mapping an $R[X]$-module $M_\phi$ to $\widetilde{\ent}_v(M_\phi)$. In this setting, Proposition \ref{conjugate}, together with the obvious fact that $\widetilde{\ent}_v(0)=0$, tells us that $\widetilde{\ent}_v$ is an invariant of $\mod{R[X]}$.

\begin{proposition}\label{monotone}
Given an $R[X]$-module $M_\phi$ and an $R[X]$-submodule $N_{\phi\restriction_N}$, then
\begin{enumerate}[\rm (1)]
\item $\widetilde{\ent}_v(\phi) \geq \widetilde{\ent}_v(\phi\restriction_N)$;
\item $\widetilde{\ent}_v(\phi) \geq \widetilde{\ent}_v(\bar \phi)$, where $\bar \phi: M/N \to M/N$ is the map induced by $\phi$;
\item $\widetilde{\ent}_v(\phi,K) \geq \widetilde{\ent}_v(\phi\restriction_N,N\cap K)+\widetilde{\ent}_v(\bar \phi, (K+N)/N)$, for any $K\in\I_\phi(M)$. 
\end{enumerate}
\end{proposition}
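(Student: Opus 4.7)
The plan is to handle the three inequalities by analyzing how the trajectories $T_n(\phi,K)$ interact with restriction to a $\phi$-invariant submodule and with passage to the quotient. For (1), I would observe that since $N$ is $\phi$-invariant, any $H \leq N$ satisfies $T_n(\phi\restriction_N,H) = T_n(\phi,H)$ for every $n$; in particular $\I_{\phi\restriction_N}(N) \subseteq \I_\phi(M)$ and $\widetilde{\ent}_v(\phi\restriction_N,H) = \widetilde{\ent}_v(\phi,H)$, so the supremum defining $\widetilde{\ent}_v$ yields the claim. For (2), write $\pi\colon M \to M/N$ for the projection and, given $\bar H \in \I_{\bar\phi}(M/N)$, lift it to $H := \pi^{-1}(\bar H) \supseteq N$. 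Because $N \leq H$, one has $(H+\phi H)/H \cong (\bar H + \bar\phi\bar H)/\bar H$, so $H$ is $\phi$-inert, and $T_n(\phi,H)/H \cong T_n(\bar\phi,\bar H)/\bar H$ for every $n$, giving $\widetilde{\ent}_v(\bar\phi,\bar H) = \widetilde{\ent}_v(\phi,H)$; the claim again follows by taking suprema.

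The substantive part is (3). Setting $\bar K := \pi(K) = (K+N)/N$, Lemma \ref{restriction} gives $N \cap K \in \I_{\phi\restriction_N}(N)$, and $\bar K \in \I_{\bar\phi}(M/N)$ because $(\bar K + \bar\phi\bar K)/\bar K$ is an epic image of $(K+\phi K)/K$. The key step is to assemble, for each $n \geq 1$, the short exact sequence
\[
0 \to \frac{N \cap T_n(\phi,K)}{N \cap K} \to \frac{T_n(\phi,K)}{K} \to \frac{T_n(\bar\phi,\bar K)}{\bar K} \to 0,
\]
obtained by applying $\pi$ to $T_n(\phi,K)$ (noting that $\pi(T_n(\phi,K)) = T_n(\bar\phi,\bar K)$ with kernel $N \cap T_n(\phi,K)$) and invoking the modular law $(K+N)\cap T_n(\phi,K) = K + (N \cap T_n(\phi,K))$ to identify the preimage of $\bar K$ inside $T_n(\phi,K)$.

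From the additivity of $L_v$ and the inclusion $T_n(\phi\restriction_N, N\cap K) \subseteq N \cap T_n(\phi,K)$, which is immediate since $N$ is $\phi$-invariant, one derives
\[
L_v\!\left(\frac{T_n(\phi,K)}{K}\right) \geq L_v\!\left(\frac{T_n(\phi\restriction_N,N\cap K)}{N\cap K}\right) + L_v\!\left(\frac{T_n(\bar\phi,\bar K)}{\bar K}\right);
\]
dividing by $n$ and letting $n \to \infty$ produces the required inequality. I expect the main subtlety to be locating the correct left-hand term of the short exact sequence: one might naively hope it is $T_n(\phi\restriction_N, N\cap K)/(N\cap K)$, but the kernel of $\pi$ restricted to $T_n(\phi,K)$ is really $N \cap T_n(\phi,K)$, which can be strictly larger. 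This is precisely why (3) is only an inequality rather than an equality, and monotonicity of $L_v$ on submodules is what closes the gap.
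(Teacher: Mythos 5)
Your proposal is correct and, for part (3), follows essentially the same route as the paper: both build the short exact sequence obtained by pushing $T_n(\phi,K)/K$ down to $T_n(\bar\phi,\bar K)/\bar K$, identify the kernel via the modular law (you write it as $(N\cap T_n(\phi,K))/(N\cap K)$, the paper as $(T_n(\phi,K)\cap(K+N))/K$, which are isomorphic), and then use the inclusion $T_n(\phi\restriction_N,N\cap K)\subseteq N\cap T_n(\phi,K)$ together with additivity of $L_v$ before dividing by $n$ and taking limits. You also supply clean arguments for parts (1) and (2), which the paper explicitly leaves to the reader, and your closing remark about why (3) is only an inequality is exactly the right intuition.
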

\begin{proof}
We give an argument just for part (3). Since $N_{\phi\restriction_N}$ is an $R[X]$-submodule of $M_\phi$, $\phi(N) \leq N$, hence $T_{n}(\phi,K\cap N) \leq N$, for all $n$. 
\\
For any $n\geq 1$ consider the following short exact sequence:
\[
0\to \frac{T_{n}(\phi,K)\cap(K+N)}{K}\to \frac{T_{n}(\phi,K)}{K}\to \frac{T_{n}(\phi, K)+N}{K+N}\to 0.
\]
The module on the right-hand side is  $T_{n}(\bar\phi, \bar K)/\bar K$, where $\bar K=(K+N)/N$. Consider now the following inclusion:
\begin{align}\label{inclusion}
\frac{T_{n}(\phi,K\cap N)}{K\cap T_{n}(\phi,K\cap N)}&\cong \frac{T_{n}(\phi,K\cap N)+K}{K}\subseteq \frac{T_{n}(\phi,K)\cap(K+N)}{K}
\end{align}
and the following quotient:
\begin{equation}\label{quotient}
\frac{T_{n}(\phi,K\cap N)}{K\cap T_{n}(\phi,K\cap N)}\to \frac{T_{n}(\phi,K\cap N)}{K\cap N}\to 0 
\end{equation}
The two formulas \eqref{inclusion} and \eqref{quotient} imply that
\[
L_v\left( \frac{T_{n}(\phi, K\cap N)}{ K\cap N}\right)\leq L_v\left( \frac{T_{n}(\phi,K)\cap(K+N)}{K}\right).
\]
Hence,
\[
L_v\left(\frac{T_{n}(\phi,K)}{K}\right)\geq L_v\left( \frac{T_{n}(\phi,K\cap N)}{K\cap N}\right) + L_v\left(\frac{T_{n}(\bar\phi, \bar K)}{\bar K}\right).
\]
Dividing by $n$ and taking the limit, we get $\ient(\phi,K)\geq\ient(\phi,K\cap N)+\ient(\bar\phi,\bar K)$.
\end{proof}

\subsection{Comparison with the valuation entropy $\ent_v$}

In this subsection we want to compare the valuation entropy $\ent_v$ with the intrinsic valuation entropy $\ient$. Given an $R[X]$-module $M_\phi$, since any submodule of finite valuation length is $\phi$-inert, there is an inclusion $\Fin(M)\subseteq \I_\phi(M)$; using Corollary \ref{inf_coro} (2), we deduce  the following inequality 
\[
\ent_v(\phi)\leq \widetilde{\ent}_v(\phi).
\] 
Our goal is to show that, if $M$ is a torsion module, the converse inequality also holds, analogously to what happens in the Abelian groups case. To prove this fact, we need the following technical lemma, which elaborates on \cite[Lem.\,4.1]{SV} with the due modifications.

\begin{lemma} \label{fingen}
Given an $R[X]$-module $M_\phi$, $H\in\I_\phi(M)$ and a real number $\epsilon > 0$:
\begin{enumerate}[\rm (1)]
\item there exists a finitely generated submodule $F$ of $H$ such that, for any $n \geq 1$,
\[
L_v\left(\frac{T_n(\phi, H)}{H}\right) - L_v\left(\frac{T_n(\phi, F)+H}{H}\right) < n \epsilon;
\]
\item if $M$ is torsion, then $\widetilde{\ent}_v(\phi,H) - \widetilde{\ent}_v(\phi, F) \leq \epsilon$.
\end{enumerate}
\end{lemma}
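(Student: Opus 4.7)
The plan is to reduce both parts to a single upper-continuity approximation: since $H\in\I_\phi(M)$, the quotient $(H+\phi H)/H$ has finite valuation length, so upper continuity of $L_v$ captures all but an $\epsilon$-fraction of it via a finitely generated approximation of the form $(H+\phi F)/H$ with $F\leq H$ finitely generated. The rest of the argument propagates this single estimate along the $\phi$-orbit through a carefully chosen interpolating chain, then uses additivity of $L_v$.

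To set up, fix $\delta>0$ (to be specialized to $\epsilon$ at the end). By upper continuity, choose a finitely generated $G\leq H+\phi H$ with $L_v((H+\phi H)/(G+H))<\delta$. Splitting each generator of $G$ into its $H$-component and its $\phi H$-component, we may assume $G+H=H+\phi F$ for some finitely generated $F\leq H$, so that
\[
L_v\!\left(\frac{H+\phi H}{H+\phi F}\right)<\delta.
\]

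For part (1), I would interpolate between $T_n(\phi,F)+H$ and $T_n(\phi,H)$ by the chain
\[
U_i \;:=\; \sum_{j=0}^{i}\phi^j H \;+\; \sum_{j=i+1}^{n-1}\phi^j F, \qquad i=0,1,\dots,n-1,
\]
so that $U_0=T_n(\phi,F)+H$ (using $F\leq H$) and $U_{n-1}=T_n(\phi,H)$. For each $i\geq 1$ one has $U_i=U_{i-1}+\phi^iH$ and $U_{i-1}\supseteq \phi^{i-1}H+\phi^i F$, so applying $\phi^{i-1}$ to the basic estimate above induces a surjection
\[
\frac{H+\phi H}{H+\phi F}\;\twoheadrightarrow\;\frac{\phi^{i-1}H+\phi^i H}{\phi^{i-1}H+\phi^i F}\;\cong\;\frac{\phi^i H}{\phi^i H\cap(\phi^{i-1}H+\phi^i F)},
\]
of which $U_i/U_{i-1}\cong \phi^iH/(\phi^iH\cap U_{i-1})$ is a further quotient; hence $L_v(U_i/U_{i-1})<\delta$ for every $i\geq 1$. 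Summing along the filtration $U_0\leq U_1\leq\cdots\leq U_{n-1}$ and using additivity of $L_v$,
\[
L_v\!\left(\frac{T_n(\phi,H)}{H}\right)-L_v\!\left(\frac{T_n(\phi,F)+H}{H}\right)=\sum_{i=1}^{n-1}L_v(U_i/U_{i-1})<(n-1)\delta,
\]
and the choice $\delta=\epsilon$ yields (1).

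For part (2), the torsion hypothesis on $M$ forces the finitely generated $F$ to be torsion, hence $L_v$-finite, so $F\in\Fin(M)\subseteq\I_\phi(M)$ and $\widetilde\ent_v(\phi,F)=\ent_v(\phi,F)$ is well-defined by Corollary \ref{inf_coro}(2). The canonical surjection $T_n(\phi,F)/F\twoheadrightarrow (T_n(\phi,F)+H)/H$ gives $L_v((T_n(\phi,F)+H)/H)\leq L_v(T_n(\phi,F)/F)$; combining with (1),
\[
L_v\!\left(\frac{T_n(\phi,H)}{H}\right)<L_v\!\left(\frac{T_n(\phi,F)}{F}\right)+n\epsilon,
\]
and dividing by $n$ and letting $n\to\infty$ yields $\widetilde\ent_v(\phi,H)-\widetilde\ent_v(\phi,F)\leq\epsilon$. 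The main obstacle, I expect, is pinpointing the interpolating chain $U_i$: it is engineered precisely so that every increment $U_i/U_{i-1}$ is bounded by the \emph{same} basic estimate on $(H+\phi H)/(H+\phi F)$, pulled back through the shift $\phi^{i-1}$; once this choice is in place, both parts reduce to additivity of $L_v$ and a routine limit passage.
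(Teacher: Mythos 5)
Your proof is correct and follows essentially the same route as the paper: both establish the base estimate $L_v\bigl((H+\phi H)/(H+\phi F)\bigr)<\epsilon$ from upper continuity of $L_v$ (the paper citing a lemma of [SV], you directly) and then propagate it across $n-1$ steps — the paper by induction on $n$, you by unrolling the induction into the explicit filtration $U_0\leq\cdots\leq U_{n-1}$ whose successive quotients are each dominated by a $\phi^{i-1}$-shift of the base quotient. Part (2) is also the same argument, phrased via the surjection $T_n(\phi,F)/F\twoheadrightarrow (T_n(\phi,F)+H)/H$ rather than the corresponding short exact sequence.
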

\begin{proof}
(1) By  \cite[Lem.\,2.1]{SV}, there exists a finitely generated submodule $K/H$ of $(H+ \phi H)/H$ satisfying the inequality $L_v((H+ \phi H)/H) - L_v(K/H) < \epsilon$, and $K/H$ is necessarily of the form $(H+ \phi F)/H$, with $F$ a finitely generated submodule of $H$. This gives the claim for $n=2$. Assuming the claim true for $n \geq 2$, i.e. assuming that $L_v(T_n(\phi, H)/(T_n(\phi, F)+H)) < n \epsilon$, we  prove it for $n+1$. Indeed,
\begin{align*}
 L_v\left(\frac{H+\phi T_n(\phi, H)}{H+\phi H+\phi T_n(\phi, F)}\right)&\leq L_v\left(\frac{\phi T_n(\phi, H)}{\phi H+\phi T_n(\phi, F)}\right)\\
&\leq L_v\left(\frac{T_n(\phi, H)}{H+T_n(\phi, F)}\right)<n\epsilon
\end{align*}
On the other hand, 
\begin{align*}
L_v\left(\frac{H+\phi H+\phi T_n(\phi, F)}{H+\phi F + \phi T_{n}(\phi, F)}\right) &\leq L_v\left(\frac{H+\phi H}{H+\phi F}\right)\\
&\leq L_v(\phi H/\phi F)\leq L_v(H/F)< \epsilon
\end{align*}
Since $H+\phi F + \phi T_{n}(\phi, F)= H+T_{n+1}(\phi, F)$, applying the above inequalities to the following exact sequence:
\[
0 \to \frac{H+\phi H+\phi T_n(\phi, F)}{H+T_{n+1}(\phi, F)} \to \frac{T_{n+1}(\phi, H)}{H+T_{n+1}(\phi, F)} \to \frac{T_{n+1}(\phi, H)}{H+\phi H+\phi T_n(\phi, F)} \to 0
\]
we obtain  the desired inequality for $n+1$.

\smallskip\noindent
(2) Being $M$ torsion, the finitely generated submodule $F \leq H$ has finite valuation length, hence it is  $\phi$-inert in $M$. For each $n \geq 1$, using the exact sequence
\[
0 \to \frac{T_n(\phi,F) \cap H}{F} \to \frac{T_n(\phi,F) }{F}\to \frac{T_n(\phi,F)+H }{H}\to 0
\]
we get:
\begin{align*}
&L_v\left(\frac{T_n(\phi, H)}{H}\right) - L_v\left(\frac{T_n(\phi, F)}{F}\right)\\
&= L_v\left(\frac{T_n(\phi, H)}{H}\right) - \left(L_v\left(\frac{(T_n(\phi, F)+H)}{H}\right)+ L_v\left(\frac{(T_n(\phi, F) \cap H)}{F}\right)\right)\\
&\leq L_v\left(\frac{T_n(\phi, H)}{H}\right) - L_v\left(\frac{T_n(\phi, F)+H}{H}\right) < n \epsilon.
\end{align*}
Dividing by $n$ and passing to the limit we get $\widetilde{\ent}_v(\phi,H) - \widetilde{\ent}_v(\phi, F) \leq \epsilon$.
\end{proof}

We can now prove the announced result for torsion modules.

\begin{proposition}\label{torsionequal}
If $\phi:M \to M$ is an endomorphism of a torsion $R$-module $M$, then $\ent_v(\phi)=\widetilde{\ent}_v(\phi)$.
\end{proposition}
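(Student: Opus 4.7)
The plan is to establish the non-trivial inequality $\ient(\phi) \leq \ent_v(\phi)$, since the reverse direction $\ent_v(\phi) \leq \ient(\phi)$ was already observed in the paragraph opening the subsection (using $\Fin(M) \subseteq \I_\phi(M)$ together with Corollary \ref{inf_coro}(2)). So the task reduces to showing that every $\phi$-inert submodule can be approximated, in the sense of entropy, by an $L_v$-finite submodule.

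Fix an arbitrary $H \in \I_\phi(M)$ and $\epsilon > 0$. I would apply Lemma \ref{fingen}(2) to produce a finitely generated submodule $F \leq H$ with
\[
\ient(\phi,H) - \ient(\phi,F) \leq \epsilon.
\]
Here is where the torsion hypothesis on $M$ is essential: because $M$ is torsion, the finitely generated submodule $F$ is itself a finitely generated torsion $R$-module, so by Lemma \ref{fin_gen_tor_fin} (or directly by the discussion at the beginning of Section 2), $L_v(F) < \infty$, i.e.\ $F \in \Fin(M)$. Without torsion, $F$ could have $L_v(F) = \infty$, and the argument would collapse.

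With $F \in \Fin(M)$ in hand, Corollary \ref{inf_coro}(2) identifies the intrinsic entropy with the ordinary valuation entropy on this submodule:
\[
\ient(\phi,F) = \ent_v(\phi,F) \leq \ent_v(\phi).
\]
Combining with the previous inequality yields $\ient(\phi,H) \leq \ent_v(\phi) + \epsilon$. Since $\epsilon > 0$ was arbitrary, $\ient(\phi,H) \leq \ent_v(\phi)$, and taking the supremum over $H \in \I_\phi(M)$ gives $\ient(\phi) \leq \ent_v(\phi)$, completing the proof.

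The main obstacle — already handled by Lemma \ref{fingen} — is controlling the entropy of $H$ by that of a finitely generated subobject uniformly in $n$; once that approximation is available, the torsion hypothesis converts ``finitely generated'' into ``$L_v$-finite'' for free, and the rest is bookkeeping.
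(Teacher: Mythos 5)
Your proof is correct and follows essentially the same route as the paper: both arguments invoke Lemma \ref{fingen}(2) to approximate the $\phi$-inert submodule $H$ by a finitely generated $F\leq H$ up to $\epsilon$, use the torsion hypothesis to deduce $F\in\Fin(M)$, and then apply Corollary \ref{inf_coro}(2) to conclude $\ient(\phi,F)=\ent_v(\phi,F)\leq\ent_v(\phi)$.
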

\begin{proof} The inequality $\ent_v(\phi)\leq \widetilde{\ent}_v(\phi)$ is always true. In order to prove the converse inequality, it is enough to prove that, given a $\phi$-inert submodule $H$ of $M$, $\widetilde{\ent}_v(\phi, H) \leq \ent_v(\phi)$. Fix a real number $\epsilon >0$ and choose, according to Lemma \ref{fingen}, a finitely generated submodule $F$ of $H$  (hence $L_v(F)< \infty$, being $M$ torsion) such that $L_v((H+ \phi H)/H) - L_v((H+ \phi F)/H) < \epsilon$. By Lemma \ref{fingen} (2), $\widetilde{\ent}_v(\phi,H) - \widetilde{\ent}_v(\phi, F) < \epsilon$. Furthermore, by Corollary \ref{inf_coro} (2), $\widetilde{\ent}_v(\phi, F) = \ent_v(\phi, F)$.
Therefore, for each $\epsilon >0$ there exists a finitely generated submodule $F$ of $H$ such that 
$$\widetilde{\ent}_v(\phi, H) \leq \ent_v(\phi, F)+ \epsilon \leq  \ent_v(\phi)+ \epsilon. $$
As $\epsilon$ was arbitrary, we get that $\widetilde{\ent}_v(\phi, H) \leq \ent_v(\phi)$.
\end{proof}

\section{Some tools for the computation of  entropy}

\subsection{The intrinsic valuation entropy is upper continuous}

In the following proposition, as a consequence of Lemma \ref{fingen}, we prove that $\widetilde{\ent}_v$ is continuous with respect to direct limits of $\phi$-invariant submodules. 

\begin{proposition}\label{continuous}
Let $\phi: M \to M$ be an  endomorphism of an $R$-module which is the direct limit of a family of $\phi$-invariant submodules $\{ M_i : i \in I \}$. Then 
\[
\widetilde{\ent}_v(\phi)=\sup_i  \widetilde{\ent}_v(\phi _i),
\] 
where $\phi_i = \phi \restriction_{M_i}$ for all $i \in I$.
\end{proposition}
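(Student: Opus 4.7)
The plan is to split the proof into the two inequalities. The easy half, $\widetilde{\ent}_v(\phi) \geq \sup_i \widetilde{\ent}_v(\phi_i)$, is immediate from Proposition \ref{monotone}(1) applied to each $\phi$-invariant submodule $M_i\leq M$. All the work is in the reverse inequality.

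For $\widetilde{\ent}_v(\phi) \leq \sup_i \widetilde{\ent}_v(\phi_i)$, fix $H\in\I_\phi(M)$ and $\epsilon>0$; the goal is to show $\widetilde{\ent}_v(\phi,H)\leq \sup_i \widetilde{\ent}_v(\phi_i)+\epsilon$. The central difficulty is that $H$ itself need not lie inside any single $M_i$, so one cannot localize the whole computation to one $\phi_i$. The way around this is Lemma \ref{fingen}(1), which produces a finitely generated $F\leq H$ whose partial trajectories approximate those of $H$ uniformly in $n$:
\[
L_v\bigl(T_n(\phi,H)/H\bigr) - L_v\bigl((T_n(\phi,F)+H)/H\bigr) < n\epsilon
\]
for every $n\geq 1$. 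Being finitely generated, $F$ sits inside some $M_{i_0}$, because $M=\bigcup_i M_i$ is a directed union along inclusions.

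Next, set $H_{i_0}:=H\cap M_{i_0}$. By Lemma \ref{restriction}, $H_{i_0}$ is $\phi_{i_0}$-inert in $M_{i_0}$, so $\widetilde{\ent}_v(\phi_{i_0},H_{i_0})\leq \widetilde{\ent}_v(\phi_{i_0})$. Since $M_{i_0}$ is $\phi$-invariant and $F\subseteq H_{i_0}$, the partial trajectory $T_n(\phi,F)=T_n(\phi_{i_0},F)$ lies in $M_{i_0}$, hence $T_n(\phi,F)\cap H=T_n(\phi,F)\cap H_{i_0}$; a one-line diagram chase then yields an embedding
\[
(T_n(\phi,F)+H)/H \;\cong\; T_n(\phi_{i_0},F)/\bigl(T_n(\phi_{i_0},F)\cap H_{i_0}\bigr) \;\hookrightarrow\; T_n(\phi_{i_0},H_{i_0})/H_{i_0}.
\]
Combining with the Lemma \ref{fingen} estimate, dividing by $n$, and letting $n\to\infty$ yields
\[
\widetilde{\ent}_v(\phi,H) \;\leq\; \widetilde{\ent}_v(\phi_{i_0},H_{i_0}) + \epsilon \;\leq\; \sup_i \widetilde{\ent}_v(\phi_i) + \epsilon.
\]
Sending $\epsilon\to 0$ and then taking the supremum over $H\in\I_\phi(M)$ concludes.

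The main obstacle, as indicated, is precisely the gap between an inert $H$ and the defining chain $\{M_i\}$: the individual traces $H\cap M_i$ do have their partial trajectories inside the corresponding $M_i$, but there is no obvious control of $T_n(\phi,H)$ by those of $H\cap M_i$ alone. The use of Lemma \ref{fingen}, which allows a single finitely generated $F$ to simulate $H$ at the level of trajectory growth, is exactly what circumvents this — since finitely generated submodules are automatically localized in some $M_{i_0}$.
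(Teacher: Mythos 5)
Your proof is correct and follows essentially the same route as the paper: both halves hinge on Lemma \ref{fingen}(1) to replace $H$ by a finitely generated $F$, which then lands in some $M_{i_0}$, after which one compares $L_v((T_n(\phi,F)+H)/H)$ with the length of the partial trajectory of $H\cap M_{i_0}$ inside $M_{i_0}$. Your version is in fact a touch cleaner in two spots: you get the needed length inequality directly from the embedding $(T_n(\phi,F)+H)/H \hookrightarrow T_n(\phi_{i_0},H_{i_0})/H_{i_0}$ (using only $F\subseteq H_{i_0}$ and the monotonicity of $L_v$), rather than invoking the identity $T_n(\phi_i,H\cap M_i)=(H\cap M_i)+T_n(\phi,F)$ as the paper does; and you correctly pass to $\sup_i$ before sending $\epsilon\to 0$, avoiding the small imprecision in the paper's concluding sentence where $i$ still depends on $\epsilon$.
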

\begin{proof}
The inequality $ \geq$ is an immediate consequence of Proposition \ref{monotone}. In order to prove the inequality $ \leq$, let $H$ be a $\phi$-inert submodule of $M$. Then $H \cap M_i$ is $\phi_i$-inert in $M_i$ for all $i$, by Lemma \ref{restriction}. By Lemma \ref{fingen} (1), there exists a finitely generated submodule $F$ of $H$ such that 
\[
L_v\left(\frac{T_n(\phi, H)}{H}\right) - L_v\left(\frac{T_n(\phi, F)+H}{H}\right) < n \epsilon
\]
for each $n \geq 1$. Then $F$ is contained in $M_i$ for a certain $i$. Consequently $F \leq H \cap M_i$ and $T_n(\phi,F) \leq T_n(\phi_i,H \cap M_i)$ for all $n \geq 1$. As in the proof of \cite[Lem.\,3.14]{DGSV} one can show that
\[
T_n(\phi_i,H \cap M_i)=(H \cap M_i)+T_n(\phi,F),
\]
consequently we have the equality
\[
\frac{(H \cap M_i)+T_n(\phi, F)}{H \cap M_i} = \frac{T_n(\phi_i,H \cap M_i)}{H \cap M_i}
\]
Let us consider  the epimorphism:
\[
 \frac{(H \cap M_i)+T_n(\phi, F)}{H \cap M_i}\cong\frac{T_n(\phi, F)}{H \cap M_i \cap T_n(\phi, F)} \twoheadrightarrow  \frac{T_n(\phi, F)}{H \cap T_n(\phi, F)}\cong\frac{T_n(\phi, F)+H}{H} 
\]
which shows that $L_v(\frac{T_n(\phi, F)+H}{H}) \leq L_v(\frac{T_n(\phi, F)}{H \cap M_i \cap T_n(\phi, F)})=L_v(\frac{T_n(\phi_i,H \cap M_i)}{H \cap M_i})$. In conclusion, for each $n \geq 1$:
\[
L_v\left(\frac{T_n(\phi, H)}{H}\right) \leq L_v\left(\frac{T_n(\phi, F)+H}{H}\right) + n \epsilon \leq  L_v\left(\frac{T_n(\phi_i,H \cap M_i)}{H \cap M_i}\right) + n \epsilon.
\]
Dividing by $n$ and passing to the limit, we get:
$$\widetilde{\ent}_v(\phi,H) \leq \widetilde{\ent}_v(\phi_i,H \cap M_i) + \epsilon.$$
Being $\epsilon$ arbitrary, we deduce that $\widetilde{\ent}_v(\phi,H) \leq \widetilde{\ent}_v(\phi_i,H \cap M_i)$. From this inequality the conclusion easily follows.
\end{proof}

As an immediate consequence we deduce the upper continuity of $\widetilde{\ent}_v$.

\begin{corollary}\label{UC}
The intrinsic valuation entropy $\widetilde{\ent}_v$ is an upper continuous invariant of $\mod {R[X]}$.
\end{corollary}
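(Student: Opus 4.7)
The plan is to derive this immediately from Proposition \ref{continuous}. First I would observe that $\widetilde{\ent}_v$ is already an invariant on $\mathbf{Mod}(R[X])$: the equality $\widetilde{\ent}_v(0)=0$ is evident from the definition, while invariance under $R[X]$-isomorphisms is precisely Proposition \ref{conjugate}. Thus the only condition left to verify is the upper continuity itself, namely
\[
\widetilde{\ent}_v(M_\phi)=\sup\bigl\{\widetilde{\ent}_v(N_{\phi\restriction_N}) : N\leq M_\phi \text{ finitely generated as an } R[X]\text{-module}\bigr\}.
\]

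The key observation is just a translation of language. A finitely generated $R[X]$-submodule $N$ of $M_\phi$, viewed simply as an $R$-submodule of $M$, is exactly a $\phi$-invariant submodule of the form $T(\phi,K)$ for some finitely generated $R$-submodule $K\leq M$: if $N$ is generated as an $R[X]$-module by $x_1,\dots,x_n$, then $N=\sum_{i}R[X]x_i=T(\phi,\sum_i x_iR)$. Such submodules form a directed family under inclusion (the sum of finitely many finitely generated $R[X]$-submodules is again finitely generated as an $R[X]$-module), and their union is clearly $M$, since every $x\in M$ generates the cyclic $R[X]$-submodule $T(\phi,xR)\ni x$. Hence $M$ is the direct limit of the $\phi$-invariant family consisting of all finitely generated $R[X]$-submodules of $M_\phi$.

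Applying Proposition \ref{continuous} to this directed family then yields exactly the displayed equality, establishing upper continuity in $\mathbf{Mod}(R[X])$. I do not foresee any real obstacle: the corollary is essentially a restatement of Proposition \ref{continuous} in the language of the category $\mathbf{Mod}(R[X])$, relying only on the elementary fact that every module over any ring is the directed union of its finitely generated submodules, applied here to the ring $R[X]$.
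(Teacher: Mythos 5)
Your proof is correct and takes essentially the same route as the paper: both identify the finitely generated $R[X]$-submodules of $M_\phi$ with the trajectories $T(\phi,K)$ of finitely generated $R$-submodules $K$, note that these form a directed family of $\phi$-invariant submodules with union $M$, and then apply Proposition \ref{continuous}. The paper treats the invariance part as already established in the discussion following Proposition \ref{conjugate}, so its proof of the corollary addresses only upper continuity, but the content is the same.
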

\begin{proof}
Let $\phi: M \to M$ be an  endomorphism of the $R$-module $M$. Then $M$, viewed as an $R[X]$-module, is the direct union of the family of its finitely generated $R[X]$-submodules, which are exactly the trajectories $T(\phi,F)$ for $F$ a finitely generated $R$-submodule of $M$. Then, Proposition \ref{continuous} ensures that $\widetilde{\ent}_v(\phi)=\sup_{F \in {\mathcal F}(M)}  \widetilde{\ent}_v(\phi\restriction _{T(\phi,F)})$.
\end{proof}

\subsection{The Limit-Free Formula}\label{lim_free_subs}

In this subsection we prove that, given an $R[X]$-module $M_\phi$, the limit computation in the definition of the intrinsic valuation entropy of $\phi$ can be avoided (see Proposition \ref{nolimit}). This fact generalizes \cite[Prop.\,5.2]{SV}, that proves the same formula in case $M$ is torsion.

\begin{lemma}\label{in_and_inv}
Let $M_\phi$ be an $R[X]$-module. Given a $\phi$-inert submodule $K$ of $M$, the following statements hold:
\begin{enumerate}[\rm (1)]
\item $\widetilde\ent_v(\phi,A(\phi,K))=L_v(A(\phi,K)/\phi^{-1}A(\phi,K))$;
\item $\widetilde\ent_v(\phi,K)=\widetilde\ent_v(\phi,A(\phi,K))$.
\end{enumerate}
\end{lemma}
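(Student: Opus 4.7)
For part (1) the argument is essentially immediate from the earlier results. Proposition \ref{prop_K}(4) guarantees that $A(\phi,K)$ is $\phi$-inert, so $\ient(\phi,A(\phi,K))$ is well defined. Applying Proposition \ref{inf} to $A(\phi,K)$, the entropy equals $\inf_n L_v\bigl(T_{n+1}(\phi,A(\phi,K))/T_n(\phi,A(\phi,K))\bigr)$. Proposition \ref{prop_K}(3) then shows that every one of these successive quotients is isomorphic to the \emph{single} module $A(\phi,K)/\phi^{-1}A(\phi,K)$, so the infimum collapses to the common value $L_v\bigl(A(\phi,K)/\phi^{-1}A(\phi,K)\bigr)$.

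For part (2), the plan is to start from Lemma \ref{lem_K_n}(2) together with the identity $A_{n+1}(\phi,K)=K+\phi^{-1}A_n(\phi,K)$ (and its direct-limit version $A(\phi,K)=K+\phi^{-1}A(\phi,K)$, established in the proof of Proposition \ref{prop_K}(4)). The second isomorphism theorem rewrites the successive quotients as
\[
\frac{T_{n+1}(\phi,K)}{T_n(\phi,K)} \cong \frac{A_{n+1}(\phi,K)}{\phi^{-1}A_n(\phi,K)} \cong \frac{K}{K \cap \phi^{-1}A_n(\phi,K)},
\]
and analogously $A(\phi,K)/\phi^{-1}A(\phi,K) \cong K/(K \cap \phi^{-1}A(\phi,K))$. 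By Proposition \ref{inf} it thus suffices to check that
\[
\inf_n L_v\bigl(K/(K \cap \phi^{-1}A_n(\phi,K))\bigr) = L_v\bigl(K/(K \cap \phi^{-1}A(\phi,K))\bigr).
\]

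To carry this out I would pass to the quotient $K/(K \cap \phi^{-1}K)$, which is $L_v$-finite because $\phi$ induces an isomorphism $K/(K \cap \phi^{-1}K) \cong (K+\phi K)/K$ and the right-hand side is $L_v$-finite by $\phi$-inertness of $K$. Since $A_n(\phi,K) \supseteq A_1(\phi,K) = K$, the submodules $J_n := (K \cap \phi^{-1}A_n(\phi,K))/(K \cap \phi^{-1}K)$ form an increasing chain inside this $L_v$-finite module whose union is $(K \cap \phi^{-1}A(\phi,K))/(K \cap \phi^{-1}K)$. Upper continuity of $L_v$ then gives $\sup_n L_v(J_n) = L_v\bigl((K \cap \phi^{-1}A(\phi,K))/(K \cap \phi^{-1}K)\bigr)$, while additivity of $L_v$ on the short exact sequences
\[
0 \to J_n \to K/(K \cap \phi^{-1}K) \to K/(K \cap \phi^{-1}A_n(\phi,K)) \to 0
\]
allows one to rewrite $L_v(K/(K \cap \phi^{-1}A_n(\phi,K)))$ as $L_v(K/(K \cap \phi^{-1}K)) - L_v(J_n)$. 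Taking the infimum in $n$ and applying the analogous subtraction for $K/(K \cap \phi^{-1}A(\phi,K))$ yields the required identity.

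The main obstacle I foresee is precisely this delicate interplay of upper continuity and additivity: upper continuity of $L_v$ controls directed unions of \emph{sub}modules but not of \emph{quotients}, and in general, for an increasing chain $\{N_n\}$ of submodules of an $L_v$-infinite module $M$ with union $N$, one cannot deduce $\inf_n L_v(M/N_n) = L_v(M/N)$. The reduction to the $L_v$-finite quotient $K/(K \cap \phi^{-1}K)$, which is available exactly because $K$ is assumed $\phi$-inert, is what legalises the interchange of infimum and direct union in our specific setting.
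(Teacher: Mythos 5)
Your proof is correct and follows essentially the same route as the paper: part (1) is identical, and for part (2) the paper also sets $S_n = K\cap\phi^{-1}A_n(\phi,K)$, establishes the same isomorphisms $T_{n+1}(\phi,K)/T_n(\phi,K)\cong A_{n+1}(\phi,K)/\phi^{-1}A_n(\phi,K)\cong K/S_n$, and reduces to showing $\lim_n L_v(K/S_n)=L_v(K/S_\infty)$. The only divergence is at that last step: the paper simply cites \cite[Lem.\,2.2]{SV}, whereas you re-derive the fact in place by passing to the $L_v$-finite quotient $K/(K\cap\phi^{-1}K)$ (which is finite precisely because $K$ is $\phi$-inert) and combining additivity with upper continuity there — a self-contained rendering of the same point, and your diagnosis of exactly where the naive interchange of infimum and directed union would fail without this reduction is accurate.
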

\begin{proof}
Part (1) follows from Proposition \ref{inf} and parts (3) and (4) of Proposition \ref{prop_K}: 
\[
\widetilde\ent_v(\phi,A(\phi,K))=\lim_{n\to\infty}L_v\left(\frac{T_{n+1}(\phi,A(\phi,K))}{T_n(\phi,A(\phi,K))}\right)=L_v\left(\frac{A(\phi,K)}{\phi^{-1}A(\phi,K)}\right).
\] 

\smallskip\noindent
(2) For all $n \geq 1$ let $S_n=K \cap \phi^{-1}A_n(\phi,K)$, and $S_\infty=\bigcup_{n}S_n=K \cap \phi^{-1}A(\phi,K)$. Clearly we have the isomorphisms 
\[
\frac{K}{S_n}\cong \frac{K+\phi^{-1}A_n(\phi,K)}{\phi^{-1}A_n(\phi,K)} = \frac{A_{n+1}(\phi,K)}{\phi^{-1}A_{n}(\phi,K)}\qquad\text{and}\qquad\frac{K}{S_\infty} \cong \frac{A(\phi,K)}{\phi^{-1}A(\phi,K)}.
\]
The claim now follows from the following equalities:
\begin{align*}
\widetilde\ent_v(\phi,K)&=\lim_{n \to \infty} L_v(T_{n+1}(\phi,K)/T_n(\phi,K))=\lim_{n \to \infty} L_v\left(\frac{A_{n+1}(\phi,K)}{\phi^{-1}A_n(\phi,K)}\right)\\
&=\lim_{n \to \infty} L_v(K/S_n)=L_v(K/S_{\infty})=L_v(A(\phi,K)/\phi^{-1}A(\phi,K))\,,
\end{align*}
where the second equality uses part (2) of Lemma \ref{lem_K_n}, while the fourth equality uses \cite[Lem.\,2.2]{SV}. 
\end{proof}

It follows from part (1) of the above lemma that, when we consider a $\phi$-inert submodule $K$ such that $\phi^{-1}K\subseteq K$ (and thus $K=A(\phi,K)$), we do not need to compute limits to evaluate the intrinsic valuation entropy with respect to $K$. In fact, it is a consequence of part (2) of the lemma that these $\phi$-inert submodules alone suffice to compute the intrinsic valuation entropy.

\begin{proposition}[Limit-Free Formula] \label{nolimit}
Let $M_\phi\in \mod {R[X]}$. Then
$$\widetilde\ent_v(M_\phi)=\sup\{L_v(N/\phi^{-1}N) : \ \text{$N\leq M$ is $\phi$-inert  and $\phi^{-1}N\subseteq N$}\}\,.$$
\end{proposition}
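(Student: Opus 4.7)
The plan is to reduce everything directly to Lemma \ref{in_and_inv} and Proposition \ref{prop_K}, so that no serious new work is needed; the proposition is really just a repackaging of those two results. The key observation is that the anti-trajectory construction $K \mapsto A(\phi,K)$ produces precisely the class of $\phi$-inert submodules $N$ that are ``backwards-invariant'' in the sense $\phi^{-1}N\subseteq N$.

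For the inequality $\leq$, I would take an arbitrary $H\in \I_\phi(M)$ and set $N:=A(\phi,H)$. By Proposition \ref{prop_K}(1), $\phi^{-1}N\subseteq N$, and by Proposition \ref{prop_K}(4), $N$ is $\phi$-inert. Then Lemma \ref{in_and_inv}(2) gives $\widetilde\ent_v(\phi,H)=\widetilde\ent_v(\phi,N)$, while Lemma \ref{in_and_inv}(1) gives $\widetilde\ent_v(\phi,N)=L_v(N/\phi^{-1}N)$. Taking the supremum over all $H$ then shows that every value $\widetilde\ent_v(\phi,H)$ already appears among the numbers $L_v(N/\phi^{-1}N)$ with $N$ in the restricted class.

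For the reverse inequality $\geq$, I would start from an arbitrary $\phi$-inert $N$ with $\phi^{-1}N\subseteq N$, and verify by induction on $n$ that $A_n(\phi,N)=N$ for every $n\geq 1$: indeed $A_1(\phi,N)=N$ by definition, and if $A_n(\phi,N)=N$, then $A_{n+1}(\phi,N)=N+\phi^{-1}N=N$ using $\phi^{-1}N\subseteq N$. Consequently $A(\phi,N)=N$, so Lemma \ref{in_and_inv}(1) yields $L_v(N/\phi^{-1}N)=\widetilde\ent_v(\phi,N)\leq \widetilde\ent_v(M_\phi)$, and taking the supremum over all such $N$ finishes the argument.

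There is no real obstacle here: the content of the proposition is already encoded in Lemma \ref{in_and_inv} (which does the genuine work of identifying $\widetilde\ent_v(\phi,K)$ with a single length, avoiding limits) together with the invariance property of $A(\phi,K)$ from Proposition \ref{prop_K}(1). The only thing one must be careful about is the trivial but necessary verification that $\phi^{-1}N\subseteq N$ forces $A(\phi,N)=N$, so that Lemma \ref{in_and_inv}(1) can be applied directly to $N$ itself on the $\geq$ side.
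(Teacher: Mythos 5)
Your proposal is correct and follows essentially the same route as the paper's own proof: the $\leq$ direction passes from an arbitrary inert $H$ to $A(\phi,H)$ via Proposition \ref{prop_K} and Lemma \ref{in_and_inv}, and the $\geq$ direction applies Lemma \ref{in_and_inv}(1) directly to $N$. The only difference is that you spell out the (easy but needed) verification that $\phi^{-1}N\subseteq N$ forces $A(\phi,N)=N$, which the paper leaves implicit; that is a helpful clarification, not a divergence in method.
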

\begin{proof}
Given a $\phi$-inert submodule $N\leq M$ such that $\phi^{-1}N\subseteq N$, Lemma \ref{in_and_inv} shows that $\widetilde\ent_v(\phi,N)=L_v(N/\phi^{-1}N)$, so that the inequality ``$\geq$" in the statement is clear. For the converse inequality, let $K\leq M$ be a $\phi$-inert submodule. Then, by Proposition \ref{prop_K} and Lemma \ref{in_and_inv}, $A(\phi,K)$ is $\phi$-inert, $\phi^{-1}A(\phi,K)\subseteq A(\phi,K)$  and 
$\widetilde\ent_v(\phi,A(\phi,K))=\widetilde{\ent}_v(\phi, K).$
\end{proof}

\section{The Addition Theorem}

This section is devoted to the proof of the following result.

\begin{theorem}\label{AT}
The intrinsic valuation entropy 
$$\widetilde\ent_v\colon \mod {R[X]}\to \R_{\geq0}\cup\{\infty\}$$ 
is a length function.
\end{theorem}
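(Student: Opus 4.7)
Since Proposition \ref{conjugate} (together with $\widetilde{\ent}_v(0) = 0$) establishes that $\widetilde{\ent}_v$ is an invariant, and Corollary \ref{UC} delivers upper continuity, the only remaining task is additivity. The plan is to fix an arbitrary short exact sequence $0 \to N_\psi \to M_\phi \to Q_{\bar\phi} \to 0$ in $\mod{R[X]}$ and, following the introduction's roadmap, to split the argument into sub-additivity and super-additivity, in this order.

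The common starting point for both halves is the short exact sequence
\[
0 \to (T_n(\phi, K) \cap N)/(K \cap N) \to T_n(\phi, K)/K \to T_n(\bar\phi, \bar K)/\bar K \to 0,
\]
valid for every $K \in \I_\phi(M)$, whose $L_v$-additivity gives
\[
L_v(T_n(\phi, K)/K) = L_v\big((T_n(\phi, K) \cap N)/(K \cap N)\big) + L_v(T_n(\bar\phi, \bar K)/\bar K).
\]
The right-hand summand contributes $\widetilde{\ent}_v(\bar\phi, \bar K)$ after dividing by $n$ and passing to the limit. The underlying tension is that $T_n(\phi, K) \cap N$ only contains $T_n(\psi, K \cap N)$, with inclusion generally strict, and this is precisely why Proposition \ref{monotone}(3) yields only a one-sided (super-additive) pointwise inequality.

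For sub-additivity, the goal is to bound $\lim_n L_v((T_n(\phi, K) \cap N)/(K \cap N))/n$ by $\widetilde{\ent}_v(\psi)$. My strategy is to reduce to finitely generated $K$ via the approximation Lemma \ref{fingen}, then verify that each $T_n(\phi, K) \cap N$ is itself $\psi$-inert in $N$, and apply the infimum characterization of Proposition \ref{inf} to dominate the limit above by $\widetilde{\ent}_v(\psi)$. Passing to the supremum over $K \in \I_\phi(M)$ would then yield the inequality $\widetilde{\ent}_v(\phi) \leq \widetilde{\ent}_v(\psi) + \widetilde{\ent}_v(\bar\phi)$.

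For super-additivity, Proposition \ref{monotone}(3) gives $\widetilde{\ent}_v(\phi, K) \geq \widetilde{\ent}_v(\psi, K \cap N) + \widetilde{\ent}_v(\bar\phi, \bar K)$ for each $K$, but the joint supremum on the right does not a priori decouple into the two individual suprema, so the main obstacle is a construction problem: given $\psi$-inert $H \leq N$ and $\bar\phi$-inert $\bar J \leq Q$ approximately realizing $\widetilde{\ent}_v(\psi)$ and $\widetilde{\ent}_v(\bar\phi)$, build a single $\phi$-inert $K \leq M$ with $H \subseteq K \cap N$, $\bar J \subseteq \bar K$, and entropic contributions essentially preserved. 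I would lift a finitely generated approximation of $\bar J$ (via Lemma \ref{fingen}) to some $J' \leq M$ and set $K = H + J'$; the $\phi$-inertness of $K$ should follow by combining the torsion nature of $T(\phi, H)/H$ (remarked in Subsection \ref{inert_subs}) with Lemma \ref{Kinert} for the torsion-free part coming from $J'$. The anti-trajectory machinery, and in particular Proposition \ref{prop_K}, should then separate the $N$- and $Q$-contributions inside $T_n(\phi, K)/K$; this is presumably the content of the forthcoming Proposition \ref{F,F'} flagged in the introduction as the main technical step. Controlling all the error terms uniformly in $n$ so that the defining limit of $\widetilde{\ent}_v(\phi, K)$ behaves correctly is the place where I expect the bulk of the technical difficulty to lie.
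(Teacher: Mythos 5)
Your reduction to sub-additivity plus super-additivity, with upper continuity and invariance already in hand, matches the paper's overall plan, and your super-additive pointwise inequality (Proposition \ref{monotone}(3)) is exactly the paper's starting point for that direction. But the sub-additivity half of your proposal has a genuine gap. After isolating the term $L_v\bigl((T_n(\phi,K)\cap N)/(K\cap N)\bigr)$ in the short exact sequence, you need $\lim_n L_v\bigl((T_n(\phi,K)\cap N)/(K\cap N)\bigr)/n \leq \widetilde\ent_v(\phi\restriction_N)$, and the sketch ``verify that each $T_n(\phi,K)\cap N$ is $\psi$-inert, then apply Proposition \ref{inf}'' does not deliver this: the chain $T_n(\phi,K)\cap N$ is \emph{not} a $\psi$-trajectory chain inside $N$ for any fixed inert seed, so Proposition \ref{inf} has nothing to grip. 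The obstruction you yourself flagged --- that $T_n(\psi,K\cap N)\subsetneq T_n(\phi,K)\cap N$ in general --- is precisely what blocks the direct argument, and it is most severe when $\phi$ is not injective. The paper circumvents this by (i) quotienting out the hyperkernel $\ker_\infty(\phi)$ and (ii) tensoring with $R[X^{\pm1}]$ to make $\phi$ invertible (Lemma \ref{prop_reduction_invertible}), after which $\phi^{-1}$ is a genuine inverse that commutes with intersections, so the clean exact sequence \eqref{sostituto_snake} together with the Limit-Free Formula (Proposition \ref{nolimit}) gives sub-additivity for Laurent modules (Lemma \ref{AT_for_inv}), and flatness of $R[X^{\pm1}]$ over $R[X]$ transports it back. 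Your plan contains no analogue of this passage to the invertible case, and I do not see how to complete it without one.

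On the super-additive side your idea of lifting a finitely generated approximation of $\bar J$ to $J'$ and setting $K=H+J'$ is in the right spirit, but it is not obvious that $K$ is $\phi$-inert: Lemma \ref{Kinert} needs $\rk_R(M)<\infty$ and $K/tK$ free of full rank, which your $K=H+J'$ need not satisfy when $H$ is an arbitrary $\psi$-inert submodule of $N$. The paper instead organizes super-additivity by first proving the Addition Theorem in three special cases --- torsion-free finite-rank modules via Proposition \ref{freeinfr} (Corollary \ref{tors_free_case_AT_coro}), torsion modules (Proposition \ref{tors_case_AT_coro}), and the splitting off of $tM$ (Proposition \ref{entropy_fg_coro_wrt_tors}) --- after reducing to finitely generated $R[X]$-modules of finite $R$-rank, and then closes the general case by a diagram chase through the exact sequences (1)--(5) and the auxiliary vanishing/identification observations (6)--(8) involving purifications and Lemma \ref{bounded_is_finite}. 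Lemma \ref{F,F'} plays the role you guessed at (independence of the finitely generated inert seed with full trajectory), but by itself it does not ``separate the $N$- and $Q$-contributions''; that separation is exactly what the case analysis and the observations (6)--(8) accomplish. In short: your outline correctly identifies the two inequalities and the right pointwise starting estimate, but it is missing the two structural devices (Laurent localization for sub-additivity, and the torsion/torsion-free decomposition with the purification argument for super-additivity) that the paper uses to actually close each half.
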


As we have already verified in Corollary \ref{UC} that $\ient$ is an upper continuous invariant, we just need to prove its additivity. Indeed, given an $R[X]$-module $M_\phi$ and an $R[X]$-submodule $N_{\phi\restriction_{N}}$, we have to verify that
\begin{equation}\label{eq_AT}
\ient(\phi)=\ient(\phi\restriction_N)+\ient(\bar \phi)
\end{equation}
where $\bar\phi\colon M/N\to M/N$ is the induced map. The proof of this fact is quite involved so we divide it in several steps. In particular, in Subsection  \ref{final_AT_subs} we use the Limit Free Formula to show that $\ient$ is sub-additive (that is, the inequality $\leq$ in \eqref{eq_AT}) and, in Subsection \ref{first_inequality}, we verify the super-additivity of $\ient$, ending the proof of Theorem \ref{AT}.

\subsection{Sub-additivity of $\widetilde\ent_v$}\label{final_AT_subs}

Given an $R$-module $M$ and an endomorphism $\phi\colon M\to M$, the following submodule
\[
\ker_\infty(\phi):=\bigcup_{n\in\N}\ker(\phi^n)
\]
is called the {\em hyperkernel of $\phi$}. It is a $\phi$-invariant submodule of $M$ and, in fact, it is the smallest $\phi$-invariant submodule such that the induced endomorphism 
\[
\bar\phi\colon M/\ker_\infty(\phi)\to M/\ker_\infty(\phi)
\]
is injective. Consider now the ring of {\em Laurent polynomials} $R[X^{\pm1}]$, which can be viewed as the localization of $R[X]$ at the multiplicative set $\{X^n:n\in\N\}$. Consider the tensor product 
\[
{\frak M}_{\Phi}=M_\phi\otimes_{R[X]}R[X^{\pm1}] .
\]
As an $R[X]$-module, ${\frak M}_{\Phi}$ can be viewed as the direct limit of the following direct system:
\begin{equation}\label{tensor_is_direct_union}
M_\phi\overset{\phi}{\to}M_\phi\overset{\phi}{\to}M_\phi\overset{\phi}{\to}M_\phi\to \cdots
\end{equation}
It is not difficult to show that the kernel of the canonical map $M_\phi \to {\frak M}_{\Phi}$ is precisely $\ker_{\infty}(\phi)$ and that, in fact, identifying $(M/\ker_\infty(\phi))_{\bar\phi}$ as an $R[X]$-submodule of ${\frak M}_{\Phi}$, we have the following isomorphism in $\mod {R[X]}$:
\[
{\frak M}_{\Phi}\cong \bigcup_{n\in\N}\Phi^{-n}((M/\ker_\infty(\phi))_{\bar\phi}).
\]

\begin{lemma}\label{prop_reduction_invertible}
Let $M_\phi$ be an $R[X]$-module. The following equalities hold:
\begin{enumerate}[\rm (1)]
\item $\widetilde\ent_v(M_\phi)=\widetilde\ent_v((M/\ker_\infty(\phi))_{\bar\phi})$;
\item $\widetilde\ent_v(M_\phi)=\widetilde\ent_v(M_\phi\otimes_{R[X]}R[X^{\pm1}])$.
\end{enumerate}
\end{lemma}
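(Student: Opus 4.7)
For part (1), I would establish the inequality $\widetilde\ent_v(M_\phi)\geq \widetilde\ent_v((M/\ker_\infty(\phi))_{\bar\phi})$ directly from Proposition \ref{monotone}(2). For the reverse inequality my plan is to invoke the Limit-Free Formula (Proposition \ref{nolimit}). The key observation is that any $\phi$-inert submodule $N\leq M$ with $\phi^{-1}N\subseteq N$ automatically contains the hyperkernel: inductively, $\ker(\phi^n)=\phi^{-n}(0)\subseteq \phi^{-n}(N)\subseteq N$, so $\ker_\infty(\phi)\subseteq N$. Setting $\bar N=N/\ker_\infty(\phi)$, a direct check gives $(\bar N+\bar\phi\bar N)/\bar N\cong (N+\phi N)/N$, so $\bar N$ is $\bar\phi$-inert; and the condition $\phi^{-1}N\subseteq N$ translates (using $\ker_\infty(\phi)\subseteq N$) to $\bar\phi^{-1}\bar N\subseteq \bar N$. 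Finally, $\bar N/\bar\phi^{-1}\bar N\cong N/\phi^{-1}N$, so their $L_v$-values agree; taking the supremum over all eligible $N$ and applying Proposition \ref{nolimit} on the quotient side yields the missing inequality.

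For part (2), my plan is to first use (1) to reduce to the injective case: as the excerpt notes, the kernel of the canonical map $M_\phi\to {\frak M}_\Phi$ is exactly $\ker_\infty(\phi)$, and since $R[X]\to R[X^{\pm1}]$ is flat, tensoring kills the hyperkernel and gives $(M/\ker_\infty(\phi))_{\bar\phi}\otimes_{R[X]}R[X^{\pm1}]\cong {\frak M}_\Phi$. Therefore replacing $M$ by $M/\ker_\infty(\phi)$ changes neither side of the asserted equality, and one may assume $\phi$ is injective. Under this assumption the canonical map realizes $M_\phi$ as an $R[X]$-submodule of ${\frak M}_\Phi$, and $\Phi$ becomes an automorphism of ${\frak M}_\Phi$; one direction follows from Proposition \ref{monotone}(1). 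For the opposite direction I would exploit the description ${\frak M}_\Phi=\bigcup_n \Phi^{-n}(M)$ as a directed union of $\Phi$-invariant submodules. Each automorphism $\Phi^n$ of ${\frak M}_\Phi$ restricts to an $R[X]$-isomorphism $(\Phi^{-n}(M))_{\Phi\restriction}\to M_\phi$, so by Proposition \ref{conjugate} the restricted intrinsic valuation entropies are all equal to $\widetilde\ent_v(\phi)$. Proposition \ref{continuous} then forces $\widetilde\ent_v(\Phi)=\sup_n \widetilde\ent_v(\phi)=\widetilde\ent_v(\phi)$.

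The only delicate point I anticipate is verifying that the submodules $\Phi^{-n}(M)$ are genuinely $\Phi$-invariant in the injective case; this rests on the ascending chain $M\subseteq\Phi^{-1}(M)\subseteq\Phi^{-2}(M)\subseteq\cdots$, which holds because $\Phi\restriction_M=\phi$ maps $M$ into itself and $\Phi$ is invertible on the localization. Beyond that, no new tools are needed: the Limit-Free Formula, monotonicity, conjugation invariance and upper continuity along directed unions of invariant submodules—all already in place—suffice to close both statements.
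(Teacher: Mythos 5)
Your proposal is correct and follows essentially the same route as the paper: part (1) combines monotonicity in one direction with the Limit-Free Formula in the other (using the observation that $\phi^{-1}$-invariant inert submodules contain the hyperkernel), and part (2) uses part (1), the description of $\mathfrak{M}_\Phi$ as a directed union of $\Phi$-invariant submodules each $R[X]$-isomorphic to $M_\phi$, and upper continuity. The paper states part (2) more tersely (simply asserting $\mathfrak{M}_\Phi$ is a direct union of copies of $(M/\ker_\infty\phi)_{\bar\phi}$), whereas you spell out the intermediate reduction to the injective case and invoke conjugation invariance explicitly; this is a welcome elaboration, not a different method.
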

\begin{proof}
(1) By Proposition \ref{nolimit},
\[
\widetilde\ent_v(M_\phi)=\sup\{L_v( N/\phi^{-1}N) : \ \text{$N\leq M$ is $\phi$-inert  and $\phi^{-1}N\subseteq N$}\}.
\]
Given a $\phi$-inert submodule $N\leq M$ such that $\phi^{-1}N\subseteq N$, it is clear that $\ker_{\infty}(\phi)\leq N$, so that, letting $\bar N=N/\ker_\infty(\phi)$,  we get 
\[ L_v( N/\phi^{-1}N)=L_v(\phi \bar N/\bar\phi^{-1}\bar N)\leq \widetilde\ent_v((M/\ker_\infty(\phi))_{\bar\phi}). 
\]
Thus, $\widetilde\ent_v(M_\phi)\leq \widetilde\ent_v((M/\ker_\infty(\phi))_{\bar\phi})$. The converse inequality follows from Proposition \ref{monotone}.

\smallskip\noindent
(2) We have proved in Proposition \ref{continuous} that $\ient$ is continuous with respect to direct unions. Hence, by the description of ${\frak M}_{\Phi}$ as a direct union of copies of $(M/\ker_{\infty}(\phi))_{\bar \phi}$ and by part (1), we get
$\ient({\frak M}_{\Phi})=\ient((M/\ker_\infty(\phi))_{\bar\phi})=\ient(M_\phi)$.
\end{proof}

\begin{lemma}\label{AT_for_inv}
Let $0\to N_{\phi\restriction_N}\to M_\phi \to (M/N)_{\bar\phi}\to 0$ be a short exact sequence in $\mod {R[X^{\pm 1}]}$. Then, 
$$\widetilde\ent_v(M_\phi)\leq\widetilde\ent_v(N_{\phi\restriction_N})+\widetilde\ent_v((M/N)_{\bar\phi})\,.$$
\end{lemma}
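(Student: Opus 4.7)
The plan is to apply the Limit-Free Formula (Proposition \ref{nolimit}), which dispenses with the limit computation entirely. Since we work in $\mod{R[X^{\pm 1}]}$, the endomorphism $\phi$ is invertible and $N$ is stable under both $\phi$ and $\phi^{-1}$. By Proposition \ref{nolimit} it suffices to prove that, for every $\phi$-inert submodule $K\leq M$ with $\phi^{-1}K\subseteq K$,
$$L_v(K/\phi^{-1}K)\leq \widetilde\ent_v(N_{\phi\restriction_N})+\widetilde\ent_v((M/N)_{\bar\phi}).$$

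Setting $K_N:=K\cap N$ and $\bar K:=(K+N)/N$, the first step is to verify that each of $K_N$ and $\bar K$ inherits the two relevant properties: inertness (for $K_N$ via Lemma \ref{restriction}, and for $\bar K$ via the epimorphism $(K+\phi K)/K\twoheadrightarrow(\bar K+\bar\phi\bar K)/\bar K$) and closure under the appropriate $\phi^{-1}$ (which, using the $\phi^{\pm 1}$-invariance of $N$, reduces to the identity $\phi^{-1}K\cap N=\phi^{-1}K_N$). The central step is then to produce the short exact sequence
$$0\to K_N/\phi^{-1}K_N\to K/\phi^{-1}K\to \bar K/\bar\phi^{-1}\bar K\to 0,$$
either by applying the snake lemma to the pair of horizontal sequences $0\to K_N\to K\to \bar K\to 0$ and $0\to \phi^{-1}K_N\to \phi^{-1}K\to \bar\phi^{-1}\bar K\to 0$, or by direct computation. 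Additivity of $L_v$ then splits $L_v(K/\phi^{-1}K)$ into the two target summands, each bounded by the corresponding entropy via Lemma \ref{in_and_inv}(1); taking the supremum over $K$ yields the inequality.

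The main obstacle is establishing the exactness of this three-term sequence. Injectivity on the left and exactness in the middle rely on two identities: $\phi^{-1}K\cap N=\phi^{-1}K_N$, which is precisely where the full $R[X^{\pm 1}]$-module structure is used (one inclusion needs $\phi^{-1}N\subseteq N$), and the modular identity $K\cap(\phi^{-1}K+N)=\phi^{-1}K+K_N$ (valid because $\phi^{-1}K\subseteq K$). Surjectivity on the right is straightforward since every element of $\bar K$ lifts to $K$. Once these identities are in hand, the rest of the argument is purely formal.
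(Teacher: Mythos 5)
Your proposal is correct and follows essentially the same route as the paper's proof: reduce via the Limit-Free Formula to an arbitrary $\phi$-inert, $\phi^{-1}$-invariant submodule $K$, form $K\cap N$ and $(K+N)/N$, and extract the three-term exact sequence $0\to K_N/\phi^{-1}K_N\to K/\phi^{-1}K\to \bar K/\bar\phi^{-1}\bar K\to 0$ using $\phi^{-1}N=N$ and the modular law, then apply additivity of $L_v$ and take suprema. The paper obtains the exact sequence by exhibiting two explicit isomorphisms rather than invoking the snake lemma, but that is only a cosmetic difference.
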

\begin{proof}
Let $T\leq M$ be a $\phi^{-1}$-invariant $\phi$-inert submodule; let $T'=T\cap N$ and $\bar T=(T+N)/N$. Clearly $T'$ and $\bar T$ are $\phi^{-1}$-invariant and $\bar \phi^{-1}$-invariant, respectively. Using the equality $N=\phi^{-1}N$ and the fact that $\phi^{-1}$ commutes with intersection of submodules, we get the following isomorphisms:
$$\frac{ T'}{\phi^{-1}T'} \cong \frac{T \cap(\phi^{-1}T+N)}{\phi^{-1}T} \ \ , \ \ \frac{T}{T \cap(\phi^{-1}T+N)} \cong \frac{ \bar T}{\bar \phi^{-1} \bar T} \ .$$
From these isomorphisms we obtain the exact sequence
\begin{equation}\label{sostituto_snake} 
0\to  \frac{ T'}{\phi^{-1}T'}\to \frac{T}{\phi^{-1}T}\to \frac{ \bar T}{\bar \phi^{-1} \bar T}\to 0
\end{equation}
that, together with Proposition \ref{nolimit}, shows that
\[
L_v(T/\phi^{-1}T)=L_v(T'/\phi^{-1}T')+L_v(\bar T/\bar \phi^{-1}\bar T)\leq \widetilde\ent_{v}({\phi\restriction_N})+\widetilde\ent_v(\bar \phi).
\]
Being $T$ arbitrary, we obtain that $\widetilde\ent_v(\phi)\leq \widetilde\ent_{v}({\phi\restriction_N})+\widetilde\ent_v(\bar \phi)$.
\end{proof}

In order to complete the proof of the sub-additivity, we must pass form $R[X^{\pm 1}]$-modules considered in Lemma \ref{AT_for_inv} to $R[X]$-modules.

\begin{proposition}\label{first_half_AT_prop}
Consider a short exact sequence in $\mod {R[X]}$
\[
0\to N_{\phi\restriction_N}\to M_\phi\to (M/N)_{\bar\phi}\to 0.
\]
Then, $\widetilde\ent_v(\phi)\leq \widetilde\ent_v(\phi\restriction_N)+\widetilde\ent_v(\bar\phi)$. 
\end{proposition}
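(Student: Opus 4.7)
The plan is to reduce the claim to Lemma \ref{AT_for_inv}, which already gives sub-additivity for short exact sequences in $\mod{R[X^{\pm 1}]}$, by passing from the given sequence of $R[X]$-modules to its localization at $\{X^n : n \in \N\}$. The two ingredients needed for this transfer are exactness of localization and the entropy-preservation statement in Lemma \ref{prop_reduction_invertible}(2).

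\medskip

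Concretely, I would first apply the functor $-\otimes_{R[X]} R[X^{\pm 1}]$ to the given short exact sequence. Since $R[X^{\pm 1}]$ is a localization of $R[X]$, it is a flat $R[X]$-module, so the resulting sequence
\[
0\to N_{\phi\restriction_N}\otimes_{R[X]} R[X^{\pm 1}]\to M_\phi\otimes_{R[X]} R[X^{\pm 1}]\to (M/N)_{\bar\phi}\otimes_{R[X]} R[X^{\pm 1}]\to 0
\]
is a short exact sequence in $\mod{R[X^{\pm 1}]}$, in which the maps are the natural $X$-equivariant extensions of the original maps. Next, I invoke Lemma \ref{AT_for_inv} for this localized sequence to obtain the inequality
\[
\widetilde\ent_v(M_\phi\otimes_{R[X]} R[X^{\pm 1}])\leq \widetilde\ent_v(N_{\phi\restriction_N}\otimes_{R[X]} R[X^{\pm 1}])+\widetilde\ent_v((M/N)_{\bar\phi}\otimes_{R[X]} R[X^{\pm 1}]).
\]

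\medskip

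Finally, I apply Lemma \ref{prop_reduction_invertible}(2) three times, once to each of $M_\phi$, $N_{\phi\restriction_N}$ and $(M/N)_{\bar\phi}$, to rewrite each term as the intrinsic valuation entropy of the original $R[X]$-module. Substituting these equalities into the displayed inequality yields exactly $\widetilde\ent_v(\phi)\leq\widetilde\ent_v(\phi\restriction_N)+\widetilde\ent_v(\bar\phi)$, completing the proof. The only potential technical point is verifying that the induced endomorphism on each tensor product is indeed the one obtained by extending $\phi$ (respectively $\phi\restriction_N$ and $\bar\phi$) to the localization, which is immediate from the construction recalled before Lemma \ref{prop_reduction_invertible}; once this identification is made the argument is essentially a one-line combination of the two preceding lemmas, so I do not expect any serious obstacle.
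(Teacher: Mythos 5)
Your proposal is correct and matches the paper's proof essentially line for line: tensor the sequence with $R[X^{\pm 1}]$ (exact by flatness), apply Lemma \ref{AT_for_inv} to the resulting sequence of $R[X^{\pm 1}]$-modules, and convert back via the three equalities of Lemma \ref{prop_reduction_invertible}(2). No differences of substance.
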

\begin{proof}
Since $R[X^{\pm 1}]=\bigcup_n X^{-n}R[X]$ is a flat $R[X]$-module, $-\otimes_{R[X]}R[X^{\pm1}]$ is an exact functor, so the following sequence is exact in $\mod {R[X^{\pm1}]}$:
\[
0\to N_{\phi\restriction_N}\otimes_{R[X]}R[X^{\pm1}]\to M_\phi\otimes_{R[X]}R[X^{\pm1}]\to (M/N)_{\bar\phi}\otimes_{R[X]}R[X^{\pm1}]\to 0.
\]
By Lemma \ref{prop_reduction_invertible}, 
\[
\ient(N_{\phi\restriction_N})=\ient(N_{\phi\restriction_N}\otimes_{R[X]}R[X^{\pm1}]) \ \ \  ,  \ \  \ient(M_{\phi})=\ient(M_\phi\otimes_{R[X]}R[X^{\pm1}]) \ \ ,
\]
\[
\ient((M/N)_{\bar\phi})=\ient((M/N)_{\bar\phi}\otimes_{R[X]}R[X^{\pm1}])\ \ ,
\]
while, by Lemma \ref{AT_for_inv},
\[
\ient(M_\phi\otimes_{R[X]}R[X^{\pm1}])\leq \ient(N_{\phi\restriction_N}\otimes_{R[X]}R[X^{\pm1}])+ \ient((M/N)_{\bar\phi}\otimes_{R[X]}R[X^{\pm1}]).
\]
Hence, the desired inequality follows.
\end{proof}

\subsection{Super-additivity of $\widetilde\ent_v$}\label{first_inequality}

The next technical lemma deals with the intrinsic valuation entropy with respect to finitely generated $\phi$-inert submodules; notice that part (2) is particularly important, as it says that, given such a submodule $H$ of $M$, the quantity $\widetilde\ent_v(\phi,H)$ does not depend on $H$ but only on $T(\phi,H)$, that is, for any other finitely generated a $\phi$-inert submodule $H'$ with $T(\phi,H)=T(\phi,H')$, one has $\widetilde\ent_v(\phi,H)=\widetilde\ent_v(\phi,H')=\widetilde\ent_v(\phi\restriction_{T(\phi,H)})$.

\begin{lemma}\label{F,F'}
Consider an $R[X]$-module $M_\phi$ and let $H' \subseteq H\in \I_\phi(M)$,
\begin{enumerate}[\rm (1)]
\item if $H/H'$ is finitely generated, then $\ient(\phi,H')\leq \ient(\phi,H)$;
\item if $H$ is finitely generated and $M=T(\phi,H)$, then $\ient(\phi)= \ient(\phi,H)$.
\end{enumerate}
\end{lemma}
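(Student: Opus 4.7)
My plan for part (1) is to exploit the short exact sequence
\[
0\to\frac{T_n(\phi,H')\cap H}{H'}\to\frac{T_n(\phi,H')}{H'}\to\frac{T_n(\phi,H')+H}{H}\to0,
\]
arising from the chain $H'\subseteq T_n(\phi,H')\cap H\subseteq T_n(\phi,H')$. The right-hand term is a submodule of $T_n(\phi,H)/H$, so its valuation length is at most $L_v(T_n(\phi,H)/H)$. For the left-hand term, the key observation is that $(T_n(\phi,H')\cap H)/H'$ lies both inside $T_n(\phi,H')/H'$---which is torsion since $H'\in\I_\phi(M)$ (as remarked before Lemma \ref{restriction})---and inside the finitely generated module $H/H'$. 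Hence it is a torsion submodule of a finitely generated module, so by Lemma \ref{fin_gen_tor_fin} its valuation length is bounded by $L_v(t(H/H'))<\infty$, a constant independent of $n$. Summing the two contributions via additivity of $L_v$, dividing by $n$, and passing to the limit kills the constant term and yields $\ient(\phi,H')\leq\ient(\phi,H)$.

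For part (2), the inequality $\ient(\phi)\geq\ient(\phi,H)$ is immediate, so I focus on the converse. Fix $K\in\I_\phi(M)$ and $\epsilon>0$. By Lemma \ref{fingen}(1) I can pick a finitely generated $F\subseteq K$ with $L_v(T_n(\phi,K)/K)-L_v((T_n(\phi,F)+K)/K)<n\epsilon$ for every $n\geq1$. Since $F$ is finitely generated and $M=T(\phi,H)=\bigcup_m T_m(\phi,H)$, I choose $m$ with $F\subseteq T_m(\phi,H)$, so that $T_n(\phi,F)\subseteq T_{n+m-1}(\phi,H)$ and $(T_n(\phi,F)+K)/K$ is a submodule of $(T_{n+m-1}(\phi,H)+K)/K$.

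The crucial step is that $(T_n(\phi,F)+K)/K\subseteq T(\phi,K)/K$, which is torsion because $K\in\I_\phi(M)$; hence this submodule in fact lies inside the torsion part of $(T_{n+m-1}(\phi,H)+K)/K$, which I control via the short exact sequence
\[
0\to(H+K)/K\to(T_{n+m-1}(\phi,H)+K)/K\to Q\to0,
\]
where by the modular law $Q\cong T_{n+m-1}(\phi,H)/(H+T_{n+m-1}(\phi,H)\cap K)$ is a quotient of the torsion module $T_{n+m-1}(\phi,H)/H$. Left-exactness of the torsion functor then bounds the valuation length of the torsion submodule of the middle by $L_v(t((H+K)/K))+L_v(Q)\leq L_v(t(H/(H\cap K)))+L_v(T_{n+m-1}(\phi,H)/H)$. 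The first summand is a finite constant (Lemma \ref{fin_gen_tor_fin}), so it vanishes after dividing by $n$, while $L_v(T_{n+m-1}(\phi,H)/H)/n\to\ient(\phi,H)$. Combining with the Lemma \ref{fingen}(1) estimate gives $\ient(\phi,K)\leq\ient(\phi,H)+\epsilon$, and letting $\epsilon\to0$ finishes the proof.

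The main obstacle I anticipate is precisely that in this non-Noetherian, valuation-domain setting the module $(H+K)/K\cong H/(H\cap K)$ may easily have infinite valuation length---as soon as $H\cap K$ has strictly smaller rank than $H$, the quotient has a nonzero free part---so any attempt to bound $L_v$ of the full module $(T_{n+m-1}(\phi,H)+K)/K$ is doomed. The resolution, which is the heart of the argument, is to use $\phi$-inertness of $K$ (equivalently, torsion of $T(\phi,K)/K$) to restrict attention to the torsion submodule, whose ``base'' contribution $L_v(t(H/(H\cap K)))$ is finite by Lemma \ref{fin_gen_tor_fin} and whose ``growth'' contribution is precisely what produces $\ient(\phi,H)$.
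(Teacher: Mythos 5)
Your part (1) is essentially the paper's argument, down to the same short exact sequence and the same appeal to Lemma \ref{fin_gen_tor_fin} to make the left-hand term a constant independent of $n$.

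For part (2) your skeleton matches the paper's (apply Lemma \ref{fingen}(1) to extract a finitely generated $F\subseteq K$, then embed $T_n(\phi,F)$ into some $T_{n+m-1}(\phi,H)$), but you diverge at the key step of controlling $L_v\bigl((T_n(\phi,F)+K)/K\bigr)$. The paper first invokes part (1) to replace $K$ by $H+K$, so that one may assume $H\leq K$; then $(T_{n+m-1}(\phi,H)+K)/K$ is outright a \emph{quotient} of $T_{n+m-1}(\phi,H)/H$ and the estimate is immediate with no extra constant to track. You instead keep $H$ and $K$ in general position, observe that the module you want to bound is torsion (because $T(\phi,K)/K$ is), and then run a torsion-submodule estimate through the exact sequence
\[
0\to (H+K)/K\to (T_{n+m-1}(\phi,H)+K)/K\to Q\to 0,
\]
using left-exactness of $t(-)$ and Lemma \ref{fin_gen_tor_fin} to kill the constant term $L_v\bigl(t(H/(H\cap K))\bigr)$. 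Both arguments are correct. The paper's reduction to $H\leq K$ is slightly cleaner in that it collapses the left-hand term of your sequence to zero and avoids the torsion-functor bookkeeping; your version buys a more self-contained part (2) that does not lean on part (1), at the cost of one more step of analysis of the same flavor (a bounded torsion ``base'' contribution plus a growth contribution). Worth also noting: the paper's reduction quietly needs that $H+K$ is again $\phi$-inert (which holds because a surjection from $(H+\phi H)/H\oplus(K+\phi K)/K$ onto $(H+K+\phi(H+K))/(H+K)$ shows the latter is $L_v$-finite); your route sidesteps that verification entirely.
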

\begin{proof}
(1) Consider the following exact sequence
\[
0\to \frac{H\cap T_n(\phi,H')}{H'}\to \frac{T_n(\phi,H')}{H'}\to \frac{T_n(\phi,H)}{H}.
\]
As both $H$ and $H'$ are $\phi$-inert, all the modules appearing in the above sequence are $L_v$-finite. This implies that, for every $n\geq 1$, the following inequality holds
\[
L_v \left(\frac{T_n(\phi,H')}{H'}\right)-L_v \left(\frac{H\cap T_n(\phi,H')}{H'}\right)\leq L_v\left(\frac{T_n(\phi,H)}{H}\right) .
\]
As remarked before Lemma \ref{restriction}, $T(\phi,H')/H'$ is a torsion module, hence $(H/H')\cap (T(\phi,H')/H')$ is a torsion submodule of the finitely generated module $(H/H')$; so, by Lemma \ref{fin_gen_tor_fin},  $L_v((H/H')\cap (T(\phi,H')/H'))<\infty$. For every $n\geq 1$ there is an inclusion $(H\cap T_n(\phi,H'))/H'\subseteq (H/H')\cap (T(\phi,H')/H')$, which implies 
\[
0\leq\lim_{n\to\infty}\frac{L_v((H\cap T_n(\phi,H'))/H')}{n}\leq \lim_{n\to\infty}\frac{L_v((H/H')\cap (T(\phi,H')/H'))}{n}=0.
\]
Hence we obtain:
\begin{align*}
\widetilde{\ent}_v(\phi,H')&=\lim_{n\to\infty}\frac{L_v(T_n(\phi,H')/H')}{n}\\
&=\lim_{n\to\infty}\frac{L_v(T_n(\phi,H')/H')}{n}-\lim_{n\to\infty}\frac{L_v((H\cap T_n(\phi,H'))/H')}{n}\\
&=\lim_{n\to\infty}\frac{L_v(T_n(\phi,H')/H')-L_v((H\cap T_n(\phi,H'))/H')}{n}\\
&\leq \lim_{n\to\infty}\frac{L_v(T_n(\phi,H)/H)}{n}=\widetilde{\ent}_v(\phi,H).
\end{align*}

\smallskip\noindent
(2) Given a $\phi$-inert submodule $K$ of $M$, we have to prove that $\widetilde \ent_v(\phi,K)\leq \widetilde \ent_v (\phi,H)$. First of all, we can suppose that $H\leq K$, as $\widetilde \ent_v(\phi,K)\leq \widetilde \ent_v(\phi,H+K)$ by part (1) (being $H+K$ $\phi$-inert and $(H+K)/K$ finitely generated). By Lemma \ref{fingen}, there exists a finitely generated submodule $F$ of $K$ such that, for any $n \geq 1$,
\[
L_v\left(\frac{T_n(\phi,K)}{K}\right)  < L_v\left(\frac{T_n(\phi, F)+K}{K}\right)+ n \epsilon.
\]
Since $M=T(\phi,H)$ and $F$ is finitely generated, there exists $k\geq 1$ such that $F\leq T_k(\phi,H)$ and so
\[
L_v\left(\frac{T_n(\phi, F)+K}{K}\right)+ n \epsilon\leq L_v\left(\frac{T_{n+k}(\phi,H)+K}{K}\right)+ n \epsilon
\]
for every $n\geq 1$. Since $(K+T_{n+k}(\phi,H))/K$ is a quotient of $T_{n+k}(\phi,H)/H$, as we supposed that $H\leq K$, 
the inequality 
\[
L_v\left(\frac{T_n(\phi,K)}{K}\right)\leq L_v\left(\frac{T_{n+k}(\phi,H)}{H}\right)+n\epsilon
\]
holds for every $n\geq 1$. 
Hence, $\widetilde\ent_v(\phi,K)\leq \widetilde\ent_v(\phi, T_k(\phi,H))+\epsilon$ and so, by Corollary \ref{inf_coro} (1), $\widetilde\ent_v(\phi,K)\leq \widetilde\ent_v(\phi, T_k(\phi,H))+\epsilon=\widetilde\ent_v(\phi, H)+\epsilon$. We can now conclude by the arbitrariness of $\epsilon$. 
\end{proof}  

As a consequence of Lemma \ref{F,F'} we derive one direction of the following proposition.

\begin{proposition}\label{entropy_fg_coro}
Let $M_\phi=T(\phi,F)$ be a finitely generated $R[X]$-module, with $F$ a finitely generated $R$-submodule of $M$. Then $\ient(\phi)<\infty$ if and only if $\rk_R(M)<\infty$.
\end{proposition}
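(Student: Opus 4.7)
The plan is to prove the two implications separately.

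For the easy direction ($\rk_R(M)<\infty\Rightarrow \ient(\phi)<\infty$), let $k=\rk_R(M)<\infty$. The $R$-ranks $\rk_R(T_n(\phi,F))$ form a non-decreasing sequence bounded by $k$, hence stabilize at some index $n_0$. Set $F_0 := T_{n_0}(\phi,F)$. Then $\rk_R(F_0+\phi F_0) = \rk_R(F_0)$, so $(F_0+\phi F_0)/F_0$ is a finitely generated torsion $R$-module, and therefore $L_v$-finite by Lemma \ref{fin_gen_tor_fin}; thus $F_0$ is $\phi$-inert. Since $F_0$ is finitely generated and $T(\phi,F_0)=T(\phi,F)=M$, Lemma \ref{F,F'}(2) together with Proposition \ref{inf} give $\ient(\phi)=\ient(\phi,F_0)\leq L_v((F_0+\phi F_0)/F_0)<\infty$.

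For the contrapositive of the other direction, assume $\rk_R(M)=\infty$. Applying Proposition \ref{monotone}(2) to the exact sequence $0\to tM \to M \to M/tM\to 0$ (noting $\rk_R(M/tM)=\infty$) reduces us to the case that $M$ is torsion-free. Then $M$ embeds into $M\otimes_R Q$, a finitely generated $Q[X]$-module with $\dim_Q(M\otimes Q)=\rk_R(M)=\infty$. By the structure theorem over the PID $Q[X]$, $M\otimes Q$ contains a direct summand isomorphic to $Q[X]$; clearing denominators, one obtains $z\in M$ whose orbit $\{\phi^n z : n\geq 0\}$ is $Q$-linearly, and hence $R$-linearly, independent in $M$.

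The core step is now to manufacture, for each $C \in \Gamma(R)\cap \R_{>0}$, a $\phi$-inert submodule $H_C\leq M$ with $\ient(\phi,H_C)=C$. Using density of $\Gamma(R)$ in $\R$, choose $a_0=1, a_1, a_2,\ldots \in R$ with $v(a_n)=C(1-2^{-n})$, so that $(a_{n+1})\subseteq (a_n)$ and $v(a_n/a_{n-1})=C\cdot 2^{-n}$. Set
\[
H_C := \bigoplus_{n\geq 0}(a_n)\phi^n z \leq M,
\]
the sum being direct by $R$-linear independence of the orbit. One checks $(H_C+\phi H_C)/H_C \cong \bigoplus_{n\geq 1}(a_{n-1})/(a_n)$, whose valuation length is $\sum_{n\geq 1} C\cdot 2^{-n}=C<\infty$, so $H_C$ is $\phi$-inert. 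A direct expansion, using $(a_{m+1})\subseteq(a_m)$, yields $T_n(\phi,H_C)=\bigoplus_j (a_{\max(0,j-n+1)})\phi^j z$, from which $L_v(T_{n+1}(\phi,H_C)/T_n(\phi,H_C))=C$ for every $n\geq 1$; Proposition \ref{inf} then gives $\ient(\phi,H_C)=C$. Since $\Gamma(R)\cap \R_{>0}$ is unbounded (being a nontrivial subgroup of $\R$), $\ient(\phi)\geq \sup_C C = \infty$.

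The main obstacle is this construction: a finitely generated $R$-submodule $H$ of a torsion-free, infinite-rank $M$ typically fails to be $\phi$-inert, because $(H+\phi H)/H$ then has positive $R$-rank and thus infinite valuation length. One is therefore forced into the countably generated ``graded'' construction above, and the non-discreteness of $\Gamma(R)$ is precisely what allows the ideals $(a_n)$ to be calibrated so that $(H_C+\phi H_C)/H_C$ has finite but arbitrarily prescribable $L_v$, producing the required sequence of nontrivial entropy values $C$.
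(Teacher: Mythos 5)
Your easy direction matches the paper's: ranks of $T_n(\phi,F)$ stabilize at some $F_0=T_{n_0}(\phi,F)$, which is finitely generated, $\phi$-inert, and satisfies $T(\phi,F_0)=M$, so Lemma~\ref{F,F'}(2) and Proposition~\ref{inf} give $\ient(\phi)=\ient(\phi,F_0)<\infty$. For the other direction both you and the paper pass to $M/tM$ (a torsion-free, still finitely generated $R[X]$-module of infinite rank) and reduce to the case of an element $z$ whose orbit $\{\phi^nz\}_{n\geq0}$ is $R$-linearly independent. The paper gets there by writing $M/tM$ as a finite sum of cyclic trajectories and observing that one of them must be free over $R$, at which point it simply asserts that the intrinsic entropy of such a free trajectory is ``clearly'' infinite. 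You instead pass to $M\otimes_RQ$ and use the structure theorem over the PID $Q[X]$, which is fine, and then actually supply the missing construction: $\phi$-inert submodules $H_C$ with $\ient(\phi,H_C)$ arbitrarily large. That is the genuine added content here, and it is essentially correct.

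There is one real slip in that construction. You claim, invoking density of $\Gamma(R)$, that you can find $a_n\in R$ with $v(a_n)=C(1-2^{-n})$ \emph{exactly}. Density only gives approximation, not exact values: $\Gamma(R)$ is a proper dense subgroup of $\R$, so even if $C\in\Gamma(R)$ there is no reason for $C/2, C/4,\dots$ to lie in $\Gamma(R)$ (take, e.g., $\Gamma(R)=\Z+\sqrt2\,\Z$ and $C=1$). The fix is immediate: either choose an increasing sequence $0=\gamma_0<\gamma_1<\cdots$ in $\Gamma(R)$ converging to any prescribed positive real and pick $a_n$ with $v(a_n)=\gamma_n$ --- the same computation then gives $\ient(\phi,H)=\lim_n\gamma_n$ --- or, simpler still, take $a_0=\cdots=a_k=1$ and $a_n=a$ for $n>k$ with $a\in P$ arbitrary, which yields a $\phi$-inert $H$ with $(H+\phi H)/H\cong R/aR$ and $\ient(\phi,H)=v(a)$; since $v(P\setminus\{0\})$ is unbounded, this already forces $\ient(\phi)=\infty$. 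With that repair your argument is complete and in fact supplies a detail the paper glosses over.
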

\begin{proof}
Assume $\ient(\phi)<\infty$. The factor module $(M/t(M))_{\bar \phi}$ is finitely generated and  $\rk_R(M) =\rk_R(M/t(M))$. By Proposition 2.11 (2), $\ient(\bar \phi)\leq \ient(\phi)$, so it is enough to prove that $\rk_R(M/t(M))<\infty$. Assume, by way of contradiction, that $\rk_R(M/t(M))=\infty$. Since $(M/t(M))_{\bar \phi}$ is a finite sum of cyclic trajectories, at least one of them, say $T(\bar\phi, x)$, must have infinite rank. Then necessarily $T(\bar\phi, x)=\oplus_{n \geq 0}\bar\phi^nxR$, and clearly
$ \ient(\bar\phi\restriction{T(\bar\phi, x)})=\infty$, so $\ient(\bar\phi)=\infty$ by Proposition 2.11 (1), absurd. 

Conversely, assume that $\rk_R(M)<\infty$. Then there exists an index $k$ such that $rk_R(T_m(\phi,F))=\rk_R(M)$ for all $m \geq k$. This implies that 
$T_{k+1}(\phi,F)/T_k(\phi,F)$ is torsion, so it is of finite valuation length, being finitely generated. This implies that $T_k(\phi,F)$ is $\phi$-inert. From Lemma \ref{F,F'} we derive that $\ient(\phi)=\ient(\phi,T_k(\phi,F))$, which is obviously finite.
\end{proof}

Below is another application of Lemma \ref{F,F'}, where we use the fact that finitely generated torsion-free $R$-modules are free (see \cite[V.2.8]{FS1}).

\begin{proposition}\label{freeinfr}
Let $M_\phi$ be an $R[X]$-module such that, as an $R$-module, $M$ is torsion-free and of finite rank. If $F$ is  a finitely generated $R$-submodule of maximum rank, then $F$ is $\phi$-inert and 
\[
\widetilde{\ent}_v(\phi)=\widetilde{\ent}_v(\phi\restriction_{T(\phi,F)})=\widetilde{\ent}_v(\phi,F).
\]
\end{proposition}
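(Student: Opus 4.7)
The plan is to handle the inertness of $F$ first, and then establish the two entropy equalities using Lemma \ref{F,F'} in combination with the upper continuity of $\widetilde{\ent}_v$.

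Since $M$ is torsion-free, so is $F$; being a finitely generated torsion-free module over an archimedean valuation domain, $F$ is free (see \cite[V.2.8]{FS1}), and since $F$ has maximal rank, $F\cong R^k$ with $k=\rk_R(M)$. In particular $tF=0$ and $F/tF\cong R^k$, so Lemma \ref{Kinert} gives that $F$ is $\phi$-inert. Since the $R[X]$-module $T(\phi,F)$ is finitely generated by its ($\phi\restriction$-inert) $R$-submodule $F$, Lemma \ref{F,F'}(2) applied inside $T(\phi,F)$ immediately yields $\widetilde{\ent}_v(\phi\restriction_{T(\phi,F)})=\widetilde{\ent}_v(\phi,F)$, disposing of the second equality.

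For the first equality, the plan is to approximate $M$ by its finitely generated $R[X]$-submodules. By Proposition \ref{continuous}, $M$ is the direct union of the submodules $T(\phi,G)$ with $G$ a finitely generated $R$-submodule of $M$; those with $G\supseteq F$ are cofinal, so $\widetilde{\ent}_v(\phi)=\sup_{G\supseteq F}\widetilde{\ent}_v(\phi\restriction_{T(\phi,G)})$. Each such $G$ is again finitely generated, torsion-free and of maximal rank $k$, so the argument of the previous paragraph shows that $G$ is $\phi$-inert and $\widetilde{\ent}_v(\phi\restriction_{T(\phi,G)})=\widetilde{\ent}_v(\phi,G)$. The problem reduces to proving $\widetilde{\ent}_v(\phi,G)=\widetilde{\ent}_v(\phi,F)$ for every such $G$, and the direction $\widetilde{\ent}_v(\phi,F)\leq\widetilde{\ent}_v(\phi,G)$ is Lemma \ref{F,F'}(1), since $G/F$ is finitely generated.

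For the reverse inequality, the key observation is that $G/F$ is a finitely generated torsion module (as $F$ and $G$ have the same rank in the torsion-free $M$), hence bounded: some nonzero $r\in R$ satisfies $rG\subseteq F$, and so $r\,T_n(\phi,G)=T_n(\phi,rG)\subseteq T_n(\phi,F)$ for every $n$. Since $T_n(\phi,G)$ is finitely generated, torsion-free, and of rank at most $k$, it is free of rank at most $k$, so $T_n(\phi,G)/r\,T_n(\phi,G)$ has valuation length at most $kv(r)$, and the further quotient $T_n(\phi,G)/(T_n(\phi,F)+G)$ inherits the same bound. Combining this with the surjection $T_n(\phi,F)/F\twoheadrightarrow (T_n(\phi,F)+G)/G$ (valid since $F\subseteq T_n(\phi,F)\cap G$) and the short exact sequence
\[
0\to \frac{T_n(\phi,F)+G}{G}\to \frac{T_n(\phi,G)}{G}\to \frac{T_n(\phi,G)}{T_n(\phi,F)+G}\to 0,
\]
one obtains $L_v(T_n(\phi,G)/G)\leq L_v(T_n(\phi,F)/F)+kv(r)$; dividing by $n$ and letting $n\to\infty$ finishes the proof. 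The main obstacle I foresee is precisely obtaining this uniform-in-$n$ bound for $L_v(T_n(\phi,G)/T_n(\phi,F))$: the torsion-free and finite-rank hypotheses are exactly what is needed to force each $T_n(\phi,G)$ to be free of rank $\leq k$, so that the $r$-torsion of the quotient has a constant length bound rather than one growing with $n$, which would otherwise be fatal.
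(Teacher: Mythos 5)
Your proof is correct, but it diverges from the paper at the key step. Both you and the paper get the inertness of $F$ from Lemma~\ref{Kinert} (your verification that $tF=0$, $F/tF\cong R^k$ is exactly what is needed) and both reduce the second equality to Lemma~\ref{F,F'}(2). The difference is the first equality $\widetilde{\ent}_v(\phi)=\widetilde{\ent}_v(\phi\restriction_{T(\phi,F)})$: the paper passes \emph{up} to the divisible envelope $D(M)=D(T(\phi,F))$, extends $\phi$ to $\widetilde\phi$ there, and invokes the argument of \cite[Cor.\,3.15]{DGSV} to conclude $\widetilde{\ent}_v(\phi)=\widetilde{\ent}_v(\widetilde\phi)=\widetilde{\ent}_v(\phi\restriction_{T(\phi,F)})$. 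You instead go \emph{down}: use upper continuity (Proposition~\ref{continuous}) to express $\widetilde{\ent}_v(\phi)$ as a supremum over the cofinal family of trajectories $T(\phi,G)$ with $F\subseteq G$ finitely generated, reduce to showing $\widetilde{\ent}_v(\phi,G)=\widetilde{\ent}_v(\phi,F)$ for each such $G$, get one inequality from Lemma~\ref{F,F'}(1), and for the other observe that $G/F$ is finitely generated torsion hence $r$-bounded for some $r\neq 0$, so that $rT_n(\phi,G)\subseteq T_n(\phi,F)$ and, because each $T_n(\phi,G)$ is a free module of rank $\leq k$, the defect $L_v\bigl(T_n(\phi,G)/(T_n(\phi,F)+G)\bigr)$ is uniformly bounded by $kv(r)$ independently of $n$, which vanishes after dividing by $n$. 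Your route is more self-contained, using only the internal lemmas of the paper rather than transplanting an argument from the Abelian-group setting of \cite{DGSV}; the paper's route is shorter on the page but pushes the real work into a cited external proof. Both are correct, and the uniform bound you identify as the crux is precisely where torsion-freeness and finite rank are spent.
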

\begin{proof}
We know that $F$ is $\phi$-inert from Lemma \ref{Kinert}.
The divisible envelope $D(M)$ of $M$ is the same as the divisible envelope of $T(\phi,F)$. Denote by $\widetilde\phi\colon D(M)\to D(M)$ the extension of $\phi$ to the divisible envelope. With the obvious modifications, the same argument of \cite[Cor.\,3.15]{DGSV} shows that $\ient(\phi)=\ient(\widetilde\phi)=\ient(\phi\restriction_{T(\phi,F)})$. It is now a consequence of Proposition \ref{F,F'} that $\ient(\phi\restriction_{T(\phi,F)})=\ient(\phi,F)$.
 \end{proof}
 
We are in position to prove AT for torsion-free modules of finite rank.

\begin{corollary}\label{tors_free_case_AT_coro}
Let $M$ be a torsion-free $R$-module of finite rank, and let $\phi\colon M\to M$ be an endomorphism. Given a $\phi$-invariant submodule $N$ of $M$ such that $M/N$ is torsion-free, we have that
\[
\ient(\phi)=\ient(\phi\restriction_N)+\ient(\bar \phi)
\]
where $\bar\phi\colon M/N\to M/N$ is the induced map. 
\end{corollary}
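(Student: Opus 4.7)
The plan is to reduce to the sub-additivity already established in Proposition \ref{first_half_AT_prop} and focus on the reverse inequality $\ient(\phi) \geq \ient(\phi\restriction_N) + \ient(\bar\phi)$. The strategy is to produce a single finitely generated $R$-submodule $F\leq M$ of maximal rank such that, simultaneously, $F\cap N$ has maximal rank in $N$ and $(F+N)/N$ has maximal rank in $M/N$. Once such an $F$ is found, Proposition \ref{freeinfr} applies to each of the three torsion-free finite-rank modules $M$, $N$, $M/N$ at these particular submodules, and Proposition \ref{monotone}(3) glues the resulting entropies together.

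To construct $F$, I would first pick a finitely generated submodule $F_N\leq N$ of maximal rank (which exists because $N$, as a submodule of $M$, is torsion-free of finite rank), and then choose elements $x_1,\ldots,x_m\in M$ whose images $\bar x_1,\ldots,\bar x_m\in M/N$ are $R$-linearly independent with $m=\rk_R(M/N)$; this is possible because $M/N$ is torsion-free of finite rank. Setting $F:=F_N+\sum_{i=1}^m x_iR$, the linear independence of the $\bar x_i$ forces $F\cap N=F_N$: any element $a+\sum r_i x_i\in F\cap N$ (with $a\in F_N$ and $r_i\in R$) satisfies $\sum r_i \bar x_i=0$ in $M/N$, hence $r_i=0$ for all $i$. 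Consequently $(F+N)/N\cong F/F_N\cong R^m$ has maximal rank in $M/N$, while $\rk_R(F)=\rk_R(N)+\rk_R(M/N)=\rk_R(M)$.

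With this $F$, Proposition \ref{freeinfr} applied in turn to $M$, $N$, and $M/N$ yields
\[
\ient(\phi)=\ient(\phi,F),\quad \ient(\phi\restriction_N)=\ient(\phi\restriction_N,F\cap N),\quad \ient(\bar\phi)=\ient(\bar\phi,(F+N)/N),
\]
and since $F$ is $\phi$-inert (again by Proposition \ref{freeinfr}, or directly by Lemma \ref{Kinert}), Proposition \ref{monotone}(3) gives
\[
\ient(\phi,F)\geq \ient(\phi\restriction_N,F\cap N)+\ient(\bar\phi,(F+N)/N).
\]
Chaining these identities and the inequality delivers super-additivity, which combined with Proposition \ref{first_half_AT_prop} yields the claimed equality.

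The only delicate step is arranging $F$ so that $F\cap N$ and $(F+N)/N$ both attain maximum rank in the correct modules; the linear-independence trick above handles this cleanly, and I do not expect any serious obstacle beyond this bookkeeping. It is worth noting that the hypothesis that $M/N$ be torsion-free is used precisely here, both to guarantee that $\bar x_1,\ldots,\bar x_m$ can be chosen $R$-linearly independent and to ensure that $M/N$ satisfies the hypothesis of Proposition \ref{freeinfr}.
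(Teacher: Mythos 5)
Your proof is correct and follows essentially the same route as the paper's: both reduce super-additivity to Proposition \ref{freeinfr} combined with Proposition \ref{monotone}(3), applied to a single well-chosen finitely generated submodule $F$ of maximal rank. The only difference is in how $F$ is produced — you build it bottom-up from a maximal-rank $F_N\leq N$ and lifts of an $R$-basis of a maximal-rank free submodule of $M/N$, whereas the paper starts from an arbitrary maximal-rank $F\leq M$ and uses the torsion-freeness of $M/N$ to see that $\bar F=(F+N)/N$ is free, that $F\cong (F\cap N)\oplus\bar F$, and that rank additivity forces $\rk_R(F\cap N)=\rk_R(N)$ and $\rk_R(\bar F)=\rk_R(M/N)$; both constructions achieve the same end and are equally valid.
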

\begin{proof}
Let $F\leq M$ be a finitely generated $R$-submodule of maximum rank. Then, $\bar F=(F+N)/N$ is finitely generated and torsion free, so it is free, showing that $F\cong (F\cap N)\oplus \bar F$ and so also $F\cap N$ is free. Notice also that $\rk_R(F\cap N)=\rk_R(N)$ and $\rk_R(\bar F)=\rk_R(M/N)$. By Proposition \ref{monotone} (3) and Proposition \ref{freeinfr},
\[
\ient(\phi)=\ient(\phi,F)\geq \ient(\phi, F\cap N)+\ient(\bar\phi, \bar F)=\ient(\phi\restriction_N)+\ient(\bar \phi).
\]
The converse inequality is proved in Proposition \ref{first_half_AT_prop}.
\end{proof}

The torsion case of AT could be deduced by one of the main results of \cite{SV}, since in that case $\ient= \ent_v$, by Proposition \ref{torsionequal}. But, for the sake of completeness, we prefer to give here a direct argument.

\begin{proposition}\label{tors_case_AT_coro}
Let $M$ be a torsion $R$-module, and let $\phi\colon M\to M$ be an endomorphism. Given a $\phi$-invariant submodule $N$ of $M$, we have that
\[
\ient(\phi)=\ient(\phi\restriction_N)+\ient(\bar \phi)
\]
where $\bar\phi\colon M/N\to M/N$ is the induced map. 
\end{proposition}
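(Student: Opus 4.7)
The inequality $\ient(\phi)\leq \ient(\phi\restriction_N)+\ient(\bar\phi)$ is already contained in Proposition \ref{first_half_AT_prop}, so the plan is to establish the converse. The key leverage specific to the torsion setting is twofold: first, Proposition \ref{torsionequal} identifies $\ient$ with $\ent_v$ on all endomorphisms of torsion modules, and second, additivity of $L_v$ forces every submodule of an $L_v$-finite module to be $L_v$-finite. The latter gives monotonicity of $\ient(\psi,-)$ on $\Fin$ with respect to arbitrary submodule inclusion, which is unavailable in general and which bypasses the delicate ``finitely generated quotient'' hypothesis appearing in Lemma \ref{F,F'}(1).

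Fix $H\in\I_{\phi\restriction_N}(N)$, $\bar K\in\I_{\bar\phi}(M/N)$, and $\epsilon>0$. Since both $N$ and $M/N$ are torsion, Lemma \ref{fingen}(2) applied to $(\phi\restriction_N,H)$ and to $(\bar\phi,\bar K)$ yields finitely generated submodules $F\leq H$ of $N$ and $\bar G\leq \bar K$ of $M/N$ with
\[
\ient(\phi\restriction_N,H)-\ient(\phi\restriction_N,F)<\epsilon\quad\text{and}\quad \ient(\bar\phi,\bar K)-\ient(\bar\phi,\bar G)<\epsilon.
\]
Pick any finitely generated preimage $G\leq M$ of $\bar G$, and set $K=F+G$. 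As $M$ is torsion, $K$ is a finitely generated torsion $R$-module, hence $L_v$-finite by Lemma \ref{fin_gen_tor_fin}, hence in particular $\phi$-inert. Because $F\subseteq N$, we have $(K+N)/N=(G+N)/N=\bar G$, while $F\subseteq K\cap N$.

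Now apply Proposition \ref{monotone}(3) to $K$ to obtain
\[
\ient(\phi,K)\geq \ient(\phi\restriction_N,K\cap N)+\ient(\bar\phi,\bar G).
\]
Since $K\in\Fin(M)$ and submodules of $L_v$-finite modules are $L_v$-finite, both $F$ and $K\cap N$ lie in $\Fin(N)$. By Corollary \ref{inf_coro}(2) the function $\ient(\phi\restriction_N,-)$ agrees on $\Fin(N)$ with $\ent_v(\phi\restriction_N,-)$, and the latter is monotone under inclusion because $T_n(\phi,F)\subseteq T_n(\phi,K\cap N)$ gives $L_v(T_n(\phi,F))\leq L_v(T_n(\phi,K\cap N))$ for all $n$. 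Consequently $\ient(\phi\restriction_N,K\cap N)\geq\ient(\phi\restriction_N,F)$, so that
\[
\ient(\phi)\geq \ient(\phi,K)\geq \ient(\phi\restriction_N,F)+\ient(\bar\phi,\bar G)> \ient(\phi\restriction_N,H)+\ient(\bar\phi,\bar K)-2\epsilon.
\]
Letting $\epsilon\to 0$ and taking the supremum over $H$ and $\bar K$ yields $\ient(\phi)\geq\ient(\phi\restriction_N)+\ient(\bar\phi)$, completing the proof.

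The main obstacle is the passage from $F$ to the a priori larger intersection $K\cap N$: one has no control over how $K\cap N$ sits above $F$ (the quotient need not be finitely generated, so Lemma \ref{F,F'}(1) does not apply). The torsion hypothesis is precisely what rescues this step, because it guarantees $K\cap N\in\Fin(N)$ and reduces the required monotonicity to the elementary fact that $L_v$ is monotone and $T_n(\phi,-)$ preserves inclusions. This is also why the analogous argument fails for arbitrary $R[X]$-modules and why Subsection \ref{first_inequality} has to invoke the more intricate tools developed for torsion-free modules of finite rank.
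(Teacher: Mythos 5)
Your proof is correct and takes essentially the same route as the paper's: both pick a finitely generated $K\leq M$ mapping onto the chosen submodule of $M/N$, apply Proposition~\ref{monotone}(3), and close the gap between $K\cap N$ and the chosen submodule of $N$ via monotonicity of $\ent_v$ on $L_v$-finite submodules, which is available precisely because the torsion hypothesis makes $K\cap N$ automatically $L_v$-finite. The only difference is that you make the reduction from arbitrary $\phi$-inert submodules to finitely generated ones explicit via Lemma~\ref{fingen}(2), where the paper starts directly from finitely generated $K_1,\bar K_2$ and leaves the final supremum step implicit.
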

\begin{proof}
By Proposition \ref{first_half_AT_prop} we already know that
$
\ient(\phi)\leq\ient(\phi\restriction_N)+\ient(\bar\phi),
$
so it is enough to show the converse inequality. Let $K_1\leq N$ and $\bar K_2\leq M/N$ be finitely generated (so $L_v$-finite), and let $K\leq M$ be a finitely generated submodule such that $(K+N)/N=\bar K_2$ and $K_1\subseteq N\cap K$. By Proposition \ref{monotone} (3),  $\ient(\phi,K)\geq\ient(\phi,K\cap N)+\ient(\bar\phi,\bar K_2)$ and, since $\ient(\phi,N\cap K)=\ent_v(\phi,N\cap K)\geq \ent_v(\phi, K_1)=\ient(\phi,K_1)$, we get $\ient(\phi,K)\geq \ient(\phi,K_1)+\ient(\bar\phi,\bar K_2)$.
\end{proof}

The next proposition proves AT in the particular case when the $\phi$-invariant submodule is the torsion part. 

\begin{proposition}\label{entropy_fg_coro_wrt_tors}
Let $M_\phi$ be an $R[X]$-module such that $\rk_R(M)=k<\infty$. Then $\ient(\phi)=\ient(\phi\restriction_{tM})+\ient(\bar\phi)$, with $\bar\phi\colon M/tM\to M/tM$ the induced map.
\end{proposition}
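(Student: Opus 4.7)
The sub-additivity direction $\ient(\phi)\leq\ient(\phi\restriction_{tM})+\ient(\bar\phi)$ is immediate from Proposition \ref{first_half_AT_prop}, so the work lies in proving the converse. The plan is to construct, for each $L_v$-finite torsion submodule $T\leq tM$, a $\phi$-inert submodule $K\leq M$ whose torsion part is exactly $T$ and whose image in $M/tM$ realises $\ient(\bar\phi)$; then Proposition \ref{monotone}(3) will separate the two contributions.

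First, using that $M/tM$ is torsion-free of finite rank $k$, pick a finitely generated $\bar F\leq M/tM$ of maximum rank. By \cite[V.2.8]{FS1}, $\bar F$ is free of rank $k$, and by Proposition \ref{freeinfr} we have $\ient(\bar\phi,\bar F)=\ient(\bar\phi)$. Lift a free basis of $\bar F$ to elements $x_1,\dots,x_k\in M$ and set $F=\sum x_iR$. Since the $\bar x_i$ are $R$-linearly independent in $M/tM$, so are the $x_i$ in $M$; hence $F\cong R^k$ and $F\cap tM=0$.

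Now for any $T\in\Fin(tM)$, set $K=T+F$. Since $F\cap tM=0$, this sum is direct, $tK=T$ and $K/tK\cong F\cong R^k$; thus $K$ is $\phi$-inert in $M$ by Lemma \ref{Kinert}. Moreover $K\cap tM=T$ and $(K+tM)/tM=\bar F$. Applying Proposition \ref{monotone}(3) with $N=tM$ gives
\[
\ient(\phi,K)\geq \ient(\phi\restriction_{tM},T)+\ient(\bar\phi,\bar F)=\ient(\phi\restriction_{tM},T)+\ient(\bar\phi),
\]
and since $T$ is $L_v$-finite, Corollary \ref{inf_coro}(2) yields $\ient(\phi\restriction_{tM},T)=\ent_v(\phi\restriction_{tM},T)$.

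Finally, taking the supremum over $T\in\Fin(tM)$ and invoking Proposition \ref{torsionequal} applied to the torsion module $tM$,
\[
\ient(\phi)\geq\sup_{T\in\Fin(tM)}\ent_v(\phi\restriction_{tM},T)+\ient(\bar\phi)=\ent_v(\phi\restriction_{tM})+\ient(\bar\phi)=\ient(\phi\restriction_{tM})+\ient(\bar\phi),
\]
which combined with Proposition \ref{first_half_AT_prop} gives the desired equality. The only delicate point is checking that the combined submodule $K=T\oplus F$ really has torsion part $T$ and $(K+tM)/tM=\bar F$; both follow from the fact that the lift $F$ of $\bar F$ intersects $tM$ trivially, which in turn hinges on the $R$-linear independence of the lifted basis.
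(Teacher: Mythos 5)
Your proof is correct and follows essentially the same route as the paper: both reduce the super-additivity inequality to Lemma \ref{Kinert} together with Proposition \ref{monotone}(3), using Proposition \ref{freeinfr} to choose a maximal-rank free model for the torsion-free quotient and Proposition \ref{torsionequal} to identify $\ient$ with $\ent_v$ on $tM$. The one cosmetic difference is that you exhibit the $\phi$-inert submodule explicitly as a direct sum $K=T\oplus F$ with $F$ a torsion-free lift of $\bar F$ (so that $tK=T$ and $K/tK\cong R^k$ are immediate), whereas the paper takes an arbitrary finitely generated $K$ with $(K+tM)/tM=\bar K_2$ and leaves these verifications implicit; your version is a touch more transparent, but the underlying argument is identical.
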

\begin{proof}
By Proposition \ref{first_half_AT_prop} we already know that
$
\ient(\phi)\leq\ient(\phi\restriction_{tM})+\ient(\bar\phi),
$
so it is enough to show the converse inequality. 

We must prove that, fixed a $\phi$-inert submodule $K_1$ of $tM$ and a $\bar\phi$-inert submodule $\bar K_2$ of $M/tM$, there exists a $\phi$-inert submodule $K$ of $M$ such that $\ient(\phi,K)\geq \ient(\phi,K_1)+\ient(\bar\phi,\bar K_2)$. By \cite[Proposition 4.2]{SV}, we may assume that $K_1$ is finitely generated and, by Proposition \ref{freeinfr}, that $\bar K_2 \cong R^k$. Let now $K$ be a finitely generated submodule of $M$ such that $\bar K_2=(K+tM)/tM$. Adding $K_1$ to $K$, we can assume, without loss of generality, that $K_1 \leq K$.  Then $K$ is $\phi$-inert, by Lemma \ref{Kinert}. By Proposition \ref{monotone} (3),  $\ient(\phi,K)\geq\ient(\phi,tK)+\ient(\bar\phi,\bar K_2)$ and, since $\ient(\phi,tK)=\ent_v(\phi,tK)\geq \ent_v(\phi, K_1)=\ient(\phi,K_1)$, we get  $\ient(\phi,K)\geq \ient(\phi,K_1)+\ient(\bar\phi,\bar K_2)$.
\end{proof}

We have now all the ingredients needed to prove the general form of AT.

\begin{theorem}[Addition Theorem] \label{additiontheorem}
Consider the following short exact sequence of $R[X]$-modules:
\[
0\to N_{\phi\restriction_N}\to M_\phi\to (M/N)_{\bar \phi}\to 0.
\]
Then, $\ient(\phi)= \ient(\phi\restriction_N)+\ient(\bar \phi)$.
\end{theorem}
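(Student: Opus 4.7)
Sub-additivity is established in Proposition \ref{first_half_AT_prop}, so I need only prove super-additivity: $\ient(\phi)\geq\ient(\phi\restriction_N)+\ient(\bar\phi)$.

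My first move is a reduction to finitely generated $R[X]$-modules. Given $\epsilon>0$, upper continuity (Corollary \ref{UC} via Proposition \ref{continuous}) provides finitely generated $R$-submodules $F_1\leq N$ and $\bar F_2\leq M/N$ whose trajectories witness $\ient(\phi\restriction_N)$ and $\ient(\bar\phi)$ up to $\epsilon$. Lifting $\bar F_2$ to a finitely generated $F_2\leq M$ and setting $M^*:=T(\phi,F_1+F_2)$, $N^*:=M^*\cap N$, $\bar M^*:=M^*/N^*\cong (M^*+N)/N$, Proposition \ref{monotone} reduces the task to proving AT for the sequence $0\to N^*\to M^*\to\bar M^*\to 0$ and letting $\epsilon\to 0$. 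If $\rk_R M^*=\infty$, Proposition \ref{entropy_fg_coro} forces $\ient(\phi\restriction_{M^*})=\infty$ and both sides are infinite by sub-additivity; otherwise $N^*$ and $\bar M^*$ also have finite rank.

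In the finite-rank case, I apply Proposition \ref{entropy_fg_coro_wrt_tors} to each of $M^*$, $N^*$, and $\bar M^*$, splitting every entropy into its torsion and torsion-free pieces. The identity $\ient(\phi\restriction_{tM^*})=\ient(\phi\restriction_{tN^*})+\ient(\bar\phi_{tM^*/tN^*})$ comes from Proposition \ref{tors_case_AT_coro} applied to $0\to tN^*\to tM^*\to tM^*/tN^*\to 0$. For the torsion-free contributions, introduce
\[
\tilde N:=\{m\in M^*:rm\in N^*\text{ for some }0\neq r\in R\},
\]
so that $tM^*\subseteq\tilde N$, $\tilde N/N^*=t\bar M^*$, and $M^*/\tilde N=\bar M^*/t\bar M^*$ is torsion-free; Corollary \ref{tors_free_case_AT_coro} then applies to $0\to\tilde N/tM^*\to M^*/tM^*\to M^*/\tilde N\to 0$. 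Substituting all splittings and cancelling, the Addition Theorem reduces to the two identities $\ient(\bar\phi_{\tilde N/tM^*})=\ient(\bar\phi_{N^*/tN^*})$ (which is immediate from Proposition \ref{freeinfr}, since both modules are torsion-free of the same finite rank) and $\ient(\bar\phi\restriction_{t\bar M^*})=\ient(\bar\phi_{tM^*/tN^*})$, which via a further application of Proposition \ref{tors_case_AT_coro} to $0\to tM^*/tN^*\to t\bar M^*\to t\bar M^*/(tM^*/tN^*)\to 0$ amounts to the vanishing of $\ient$ on the torsion module $t\bar M^*/(tM^*/tN^*)\cong(\tilde N/tM^*)/(N^*/tN^*)$.

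\emph{Main obstacle.} The crux is precisely this last vanishing: for a torsion quotient $U/V$ of a torsion-free finite-rank module $U$ by a submodule $V$ of the same rank, one needs $\ient(\bar\phi_{U/V})=0$. Extending $\phi$ to the divisible envelope $D(U)=D(V)=Q^k$ and applying the limit-free formula (Proposition \ref{nolimit}) to $U/V$, one observes that any $\phi^{-1}$-invariant $\phi$-inert submodule of $U$ contributing to the supremum must contain $V$; Proposition \ref{freeinfr} then shows the whole of $\ient(\phi\restriction_U)$ is already accounted for by a fixed finitely generated maximum-rank submodule $F\subseteq V$, which forces the ``new'' contributions coming from $U/V$ to collapse to zero. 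Once this vanishing is in place, all the bookkeeping of the preceding paragraph cancels cleanly, producing the Addition Theorem.
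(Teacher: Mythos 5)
Your overall route is the same as the paper's: reduce to finitely generated $R[X]$-modules via upper continuity, dispose of the infinite-rank case through Proposition \ref{entropy_fg_coro}, and in the finite-rank case split along the torsion part using Propositions \ref{entropy_fg_coro_wrt_tors}, \ref{tors_case_AT_coro} and Corollary \ref{tors_free_case_AT_coro}, with the controlling auxiliary module $\tilde N/tM^*$---which is exactly the paper's $(K/tM)^*$ for $K=tM+N$. The two cancellation identities you isolate are the correct ones, and the bookkeeping matches what the paper does.

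There is, however, a genuine gap at the step you yourself label the ``main obstacle,'' namely the vanishing $\ient(\bar\phi\restriction_{U/V})=0$ with $U=\tilde N/tM^*$ and $V=(N^*+tM^*)/tM^*$. Your justification---that the $\phi^{-1}$-invariant $\phi$-inert submodules relevant to the limit-free supremum contain $V$, and that Proposition \ref{freeinfr} shows $\ient(\phi\restriction_U)$ is already accounted for by a fixed maximal-rank $F\subseteq V$, which ``forces the new contributions to collapse to zero''---is not a proof. From $\ient(\phi\restriction_U)=\ient(\phi,F)=\ient(\phi\restriction_V)$ one could conclude $\ient(\bar\phi\restriction_{U/V})=0$ only by invoking super-additivity for the sequence $0\to V\to U\to U/V\to 0$, which is precisely what is under proof; as stated the reasoning is circular, and the only monotonicity available at this stage (Proposition \ref{monotone}(3)) gives an inequality pointing the wrong way.

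What is actually needed is Lemma \ref{bounded_is_finite}, which you never invoke. Since $U/V$ is torsion, Proposition \ref{torsionequal} together with \cite[Prop.\,4.2]{SV} reduces the vanishing to showing $\ent_v(\bar\phi,\bar C)=0$ for every finitely generated $\bar C\leq U/V$. The key observation, absent from your argument, is that the \emph{full trajectory} $T(\bar\phi,\bar C)$ is bounded: pick $r\neq 0$ annihilating the finitely many generators of $\bar C$ modulo $V$; since $V$ is $\phi$-invariant, the same $r$ annihilates $\bar\phi^n\bar c$ for every $n\geq 0$. Writing $T(\bar\phi,\bar C)=W/V$ with $V\leq W\leq U$ and fixing a maximal-rank free $R^j\cong F\leq V$, Lemma \ref{bounded_is_finite} applied to $R^j\leq V\leq W\leq Q^j$ gives $L_v(W/V)<\infty$, hence $\ent_v(\bar\phi,\bar C)=0$. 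Supplying this argument (which is what the paper does in observations (6)--(8) of its proof) closes the gap; the remainder of your reductions then go through as planned.
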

\begin{proof}
In view of Propositon \ref{first_half_AT_prop}, it is enough to prove the inequality $"\geq"$. If either $\ient(\phi\restriction_N)=\infty$ or $\ient(\bar \phi)=\infty$ then, by Proposition \ref{monotone}, $\ient(\phi)=\infty$ and so the desired equality holds. Hence, suppose that $\ient(\phi\restriction_N)<\infty$ and $\ient(\bar \phi)<\infty$. By Corollary \ref{UC}, these entropies are the supremum of the entropies of the restrictions to finitely generated submodules and so, given $\epsilon>0$, there exist finitely generated $R$-submodules $F_1\leq N$ and $\bar F_2\leq M/N$ such that
\[
\ient(\phi\restriction_N)\leq \ient(\phi\restriction_{T(\phi,F_1)})+\epsilon/2\qquad\ient(\bar \phi)\leq \ient(\bar\phi\restriction_{T(\bar\phi,\bar F_2)})+\epsilon/2.
\]
Let $F$ be a finitely generated $R$-submodule of $M$ such that $(F+N)/N=\bar F_2$ and $F_1\leq F\cap N$. Let $M'=T(\phi, F)$, $N'=N\cap T(\phi,F)$, and notice that $M'/N'\cong T(\bar\phi,\bar F_2)$. By the arbitrariness of $\epsilon>0$, it is enough to prove the first inequality below, since the other two inequalities follow by the above arguments:
\begin{align*}
\ient(\phi\restriction_{M'})&\geq \ient(\phi\restriction_{N'})+\ient(\bar\phi\restriction_{M'/N'})\\
&\geq \ient(\phi\restriction_{T(\phi,F_1)})+\ient(\bar\phi\restriction_{T(\bar\phi,\bar F_2)})\\
&\geq \ient(\phi\restriction_N)+\ient(\bar\phi)-\epsilon. 
\end{align*}
In other words, we have reduced our problem to the case when $M=T(\phi,F)$ is a finitely generated $R[X]$-module. Now, if $\rk_R(M)=\infty$, then $\ient(\phi)=\infty$ by Proposition \ref{entropy_fg_coro}, and so clearly $\ient(\phi)\geq \ient(\phi\restriction_N)+\ient(\bar \phi)$. Hence, suppose $\rk_R(M)=k<\infty$.

Let $K=tM+N$; then $K/tM$ is a $\bar\phi$-invariant submodule of $M/tM$, as well as its purification $(K/tM)^*= \{ \bar x \in M/tM \ | \ r \bar x \in K/tM \ {\rm for \ some} \ r \neq 0 \}$.

\noindent Let us list some short exact sequences of $R[X]$-modules on which $\ient$ is additive:

\begin{enumerate}
\item[\rm (1)] $0\to tM\to M\to M/tM\to 0$, by Proposition \ref{entropy_fg_coro_wrt_tors};
\item[\rm (2)]  $0\to tN\to N\to K/tM\to 0$, by Proposition \ref{entropy_fg_coro_wrt_tors};
\item[\rm (3)] $0\to t(M/N)\to M/N\to (M/N))/t(M/N) \to 0$, by Proposition \ref{entropy_fg_coro_wrt_tors};
\item[\rm (4)]  $0\to tN\to tM\to tM/tN\to 0$, by Proposition \ref{tors_case_AT_coro};
\item[\rm (5)] $0\to (K/tM)^*\to M/tM\to (M/K)/t(M/K)\to 0$, by Corollary \ref{tors_free_case_AT_coro}.
\end{enumerate}
Furthermore, we also need the following three observations, in which, as in the next part of the proof, by abusing notation, we eliminate the subscript of the endomorphism for $\Z[X]$-modules when we apply $\ient$.
\begin{enumerate}
\item[(6)] $\ient((K/tM)^*/(K/tM))=0$. By Proposition \ref{torsionequal} and \cite[Proposition 4.2]{SV}, it is enough to show that, for any finitely generated $R$-submodule $C$ of $(K/tM)^*$, 
\[
L_v\left(\frac{T(\bar \phi, C)+(K/tM)}{K/tM}\right)<\infty.
\] 
For that we use Lemma \ref{bounded_is_finite}. Indeed, we know that $\rk_R(M)=k<\infty$, so that $(K/tM)^*\leq M/tM\leq Q^k$; furthermore $\frac{T(\bar \phi, C)+(K/tM)}{K/tM}\leq \frac{(K/tM)^*}{K/tM}$ are torsion modules. Choose a finite set $\{c_1,\ldots,c_t\}$ of generators for $C$, then there is an element $r\neq 0$ in $R$ such that $rc_i\in K/tM$, for all $i$. Hence $r$ annihilates $\frac{T(\bar \phi, C)+(K/tM)}{K/tM}$, which is then a bounded sub-module of $Q^k/(K/tM)$ and Lemma \ref{bounded_is_finite} applies.
\item[(7)] $\ient((K/tM)^*)=\ient(K/tM)$. This follows by point (6) and by the sub-additivity of $\ient$;
\item[(8)] $\ient(t(M/N))=\ient(tM/tN)$. This follows by point (6) and by the fact that $t(M/N)/(K/N)$ embeds into $(K/tM)^*/(K/tM)$; in fact, $t(M/N)/(K/N)$ embeds into $t(M/K)$, which is isomorphic to 
\[
t((M/tM)/(K/tM))=(K/tM)^*/(K/tM).
\] 
Hence $\ient((K/tM)^*/(K/tM))=0$ implies $\ient(t(M/N)/(K/N))=0$ and consequently $\ient(t(M/N))=\ient(K/N)=\ient(tM/tN)$, since $tM/tN \cong K/N$.
\end{enumerate}

\noindent Now we can conclude the proof with the following series of equalities depending on the associated overset points:
\begin{align*}
\ient(M)&\overset{(1)}{=}\ient(tM)+\ient(M/tM) \\
&\overset{(4,5)}{=}\ient(tN)+\ient(tM/tN)+\ient((K/tM)^*)+\ient((M/K)/t(M/K)) \\
&\overset{(7)}{=}\ient(tN)+\ient(tM/tN)+\ient(K/tM)+\ient((M/N)/t(M/N)) \\
&\overset{(2,3)}{=}\ient(N)+\ient(tM/tN)+\ient(M/N)-\ient(t(M/N)) \\
&\overset{(8)}{=}\ient(N)+\ient(M/N) \qedhere
\end{align*}
\end{proof}

\section{The Intrinsic Algebraic Yuzvinski Formula }

In order to state the Intrinsic Algebraic Yuzvinski Formula (IAYF for short) in a simple way, we introduce the following terminology. If $\phi\colon Q^n \to Q^n$ is a linear transformation, where $Q$ denotes the field of quotients of the valuation domain $R$, let $p_\phi(X) \in Q[X]$ be the (monic) characteristic polynomial of $\phi$ over $Q$. Since $R$ is a valuation domain, there exists an element $s \in R$ of minimal value such that $sp_\phi(X) \in R[X]$; thus the polynomial $sp_\phi(X)$ is primitive (i.e., its content $c(sp_\phi(X))$ equals $R$) with leading coefficient $s$, and it is called the {\it characteristic polynomial of $\phi$ over $R$}.  Our goal in this section is to prove the following version of the IAYF.

\begin{theorem}[IAYF] \label{IAYF} 
Let $\phi: Q^n \to Q^n$ be a linear transformation. Then $\widetilde{\ent}_v(\phi)=v(s)$, where $s\in R$ is the leading coefficient of the characteristic polynomial of $\phi$ over $R$.
\end{theorem}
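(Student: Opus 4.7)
The plan is to reduce to the case of a cyclic $Q[X]$-module via the Addition Theorem, and then perform a direct computation on such modules.

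View $Q^n$ as a $Q[X]$-module via $\phi$. Since $Q[X]$ is a PID and $Q^n$ is finitely generated and $Q[X]$-torsion, the structure theorem gives $Q^n \cong \bigoplus_{i} Q[X]/(f_i)$ with each $f_i \in Q[X]$ monic, and $p_\phi = \prod_i f_i$. By the Addition Theorem (Theorem \ref{additiontheorem}), $\widetilde{\ent}_v(\phi) = \sum_i \widetilde{\ent}_v(\phi_i)$, where $\phi_i$ denotes multiplication by $X$ on $Q[X]/(f_i)$. If $s_i \in R$ is an element of minimal value with $s_i f_i \in R[X]$, then each $s_i f_i$ is primitive, and by Gauss's lemma for the valuation domain $R$ (content is multiplicative on products of polynomials), the product $\prod_i (s_i f_i) = (\prod_i s_i)\, p_\phi$ is again primitive in $R[X]$; hence $\prod_i s_i$ agrees with $s$ up to a unit, and $v(s) = \sum_i v(s_i)$. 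Thus it suffices to prove the formula for a single cyclic module $M_\phi = Q[X]/(f)$ with $f = X^m + a_{m-1}X^{m-1} + \cdots + a_0$ monic.

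In this cyclic setting, let $F := R + RX + \cdots + RX^{m-1}$ be the natural free $R$-submodule of $M$ of full rank $m$. By Proposition \ref{freeinfr}, $F$ is $\phi$-inert and $\widetilde{\ent}_v(\phi) = \widetilde{\ent}_v(\phi,F)$. Setting $N_k := R + RX + \cdots + RX^{k-1}$, the identity $\phi^i F = RX^i + \cdots + RX^{i+m-1}$ yields $T_n(\phi,F) = N_{n+m-1}$, and the computation reduces to showing that for every $j \geq m$ the cyclic quotient $N_{j+1}/N_j$ is isomorphic to $R/sR$, so that $L_v(N_{j+1}/N_j) = v(s)$. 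Granting this, $L_v(N_{n+m-1}/N_m) = (n-1)\,v(s)$, and the limit gives $\widetilde{\ent}_v(\phi,F) = v(s)$.

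The key claim amounts to identifying the annihilator $\alpha_j := \{r \in R : rX^j \in N_j\}$ of the class of $X^j$ as exactly $sR$. The inclusion $sR \subseteq \alpha_j$ follows by multiplying the relation $g(X) := sf(X) \equiv 0 \pmod{f}$ by $X^{j-m}$, which expresses $sX^j$ as an $R$-linear combination of $X^{j-m}, \ldots, X^{j-1} \in N_j$. Conversely, if $rX^j \in N_j$, the polynomial $P(X) := rX^j - \sum_{k<j} c_k X^k \in R[X]$ is divisible by $f$ in $Q[X]$; the identity $R[X] \cap fQ[X] = gR[X]$ (a standard consequence of Gauss's lemma for valuation domains, obtained by comparing contents of $P$ and $f\cdot(P/f)$) then forces $P = gQ'$ with $Q' \in R[X]$, and comparing leading coefficients yields $r = s\cdot \mathrm{lead}(Q') \in sR$. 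The main obstacle is precisely this sharp identification of $\alpha_j$: the $\supseteq$ direction is a direct substitution, while the converse relies essentially on content-multiplicativity of polynomials over $R$, which is what pins the entropy to the single invariant $v(s)$ and rules out any contribution from the remaining coefficients of $f$.
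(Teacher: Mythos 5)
Your proof is correct and follows essentially the same path as the paper: decompose $Q^n_\phi$ into cyclic $Q[X]$-modules, invoke the Addition Theorem (using Gauss's lemma to see that the leading coefficients over $R$ multiply), pick the free full-rank $R$-submodule $F$, and apply Proposition~\ref{freeinfr} together with the identification of the successive trajectory quotients with $R/sR$. The only difference is presentational — where the paper delegates the quotient computation to Proposition~\ref{cyclictrajectory}(4), you inline it as a direct annihilator/content argument on $Q[X]/(f)$ — but the underlying mechanism (primitivity of $sf$ plus content multiplicativity over the valuation domain) is identical.
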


Notice that this formula shows that $\widetilde{\ent}_v(\phi)=0$ exactly when $p_\phi(X)\in R[X]$, that is, when $\phi$ is integral over $R$, while, if some $q_i \in Q \setminus R$, the intrinsic valuation entropy takes a positive value.

\smallskip
Let us briefly explain where the statement of Theorem \ref{IAYF} comes from. Recall that the Algebraic Yuzvinski Formula, proved in \cite{GV}, deals with another algebraic entropy for Abelian groups, denoted by $h$ and deeply investigated in \cite {DG}. It states that the entropy $h(\phi)$ of an endomorphism $\phi$ of a finite dimensional vector space over the rational field $\Q$ coincides with the Mahler measure of the characteristic polynomial of $\phi$ (we refer to \cite{GV,DGSV} for the notions of the entropy $h$ and of the Mahler measure of a rational polynomial). In fact, this connection between the values of $h$ Mahler measure reflects a dual property of the topological entropy on solenoidal endomorphisms (see, for example, \cite{LW})

The Intrinsic Algebraic Yuzvinski Formula for the intrinsic entropy  $\widetilde{\ent}$ proved in \cite{DGSV} (see also \cite{GV2,SV1}), states that the entropy $\widetilde{\ent}(\phi)$ coincides with $\log(s)$, where $s$ is the minimal common multiple of the denominators of the rational numbers appearing in the characteristic polynomial $p_\phi(X)$ of $\phi$ over $\Q$.  This $s$ is the minimal positive integer such that $sp_\phi(X)$ is a primitive polynomial of $\Z[X]$. This shows the strict analogy between this result and Theorem \ref{IAYF}.

\smallskip
The IAYF is a consequence of the following result, which is analogous to Lemma 3.3 in \cite{SV1}, with the due modifications.

\begin{proposition}\label{cyclictrajectory}
Let $M$ be a torsion-free $R$-module and $\phi\colon M \to M$ an endomorphism. Let $x \in M$ be an element generating a $\phi$-trajectory $T(\phi,x)$ of finite rank $n \geq 1$. Then,
\begin{enumerate}
\item[\rm (1)] $T_n(\phi,x)=\bigoplus_{0 \le i \leq n-1} \phi^ixR$;
\item[\rm (2)] there exists a polynomial $f(X) \in R[X]$ of degree $n$, with content $c(f(X))=R$ and leading coefficient $s \in R$, such that $f(\phi)(x)=0$;
\item[\rm (3)] $R[X]/(f(X)) \cong T(\phi,x)$;
\item[\rm (4)] $T_{k+1}(\phi,x)/T_k(\phi,x) \cong R/sR$ for every $k \geq n$;
\item[\rm (5)] if $s$ is a unit, then $T(\phi,x)=T_n(\phi,x)$ is free of rank $n$, otherwise the quotient $T(\phi,x)/T_n(\phi,x)$ is a uniserial divisible module isomorphic to $Q/R$.
\end{enumerate}
\end{proposition}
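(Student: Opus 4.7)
The plan is to establish parts (1)--(5) in sequence, pivoting on the identification $T(\phi,x)\cong R[X]/(f)$, which is part (3); parts (1)--(2) set up this identification and parts (4)--(5) are read off from the structure of $R[X]/(f)$.

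For (1), I would argue by rank. Since $M$ is torsion-free and $T(\phi,x)$ has rank $n$, the vector space $T(\phi,x)\otimes_R Q$ is $n$-dimensional over $Q$ and is spanned by $\{\phi^i x\otimes 1:i\ge 0\}$; the first $n$ of these must therefore be $Q$-linearly independent (any $Q$-dependence would force the whole span to have dimension $<n$, contradicting the rank), and $Q$-independence lifts to $R$-independence by torsion-freeness, giving the direct-sum formula for $T_n(\phi,x)$. For (2), part (1) provides a relation $\phi^n x=\sum_{i<n}q_i\phi^ix$ with $q_i\in Q$; choose $s\in R$ of minimal valuation such that $sq_i\in R$ for all $i$, namely with $v(s)=\max(0,-\min_i v(q_i))$, and set $f(X)=sX^n-\sum_{i<n}(sq_i)X^i\in R[X]$. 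The minimality of $v(s)$ guarantees that some coefficient of $f$ has valuation $0$, which over a valuation domain is exactly the content-$R$ condition.

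For (3), the evaluation map $\psi\colon R[X]\to T(\phi,x)$, $p\mapsto p(\phi)(x)$, is surjective with $f\in\ker\psi$. For the reverse containment, given $g\in\ker\psi$, I would long-divide in $Q[X]$ to write $g=hf+r$ with $\deg r<n$, apply $\psi$, and invoke the $Q$-independence of (1) to conclude $r=0$, so $g=hf$ in $Q[X]$. The content formula for valuation domains, $c(hf)=c(h)c(f)$ (a form of Gauss's lemma), combined with $c(f)=R$ and $c(g)\subseteq R$, forces $c(h)\subseteq R$, i.e., $h\in R[X]$, so $g\in(f)$. For (4), under the isomorphism $T(\phi,x)\cong R[X]/(f)$, the submodule $T_k(\phi,x)$ corresponds to $\sum_{i<k}R\bar X^i$; for $k\ge n$ the quotient is cyclic on $\bar X^k$, and the condition $r\bar X^k=0$ amounts to an equation $rX^k=p(X)+h(X)f(X)$ in $R[X]$ with $\deg p<k$. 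Comparing the $X^k$-coefficient forces $h=cX^{k-n}$ modulo terms that land inside $p$, hence $r=cs$, so $\mathrm{Ann}_R(\bar X^k)=sR$ and $T_{k+1}/T_k\cong R/sR$.

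For (5), if $s$ is a unit then $f/s$ is a monic relation in $R[X]$, so $\phi^n x$ lies in the $R$-span of $x,\phi x,\ldots,\phi^{n-1}x$ and thus $T(\phi,x)=T_n(\phi,x)$, free of rank $n$ by (1). If $s$ is not a unit, (4) exhibits $V:=T(\phi,x)/T_n(\phi,x)$ as an ascending union of cyclic extensions with successive quotients $R/sR$, in particular a torsion $R$-module. Divisibility follows by exploiting the relations $X^{k+1}f\equiv 0\pmod{(f)}$ in $R[X]/(f)$: reducing modulo $T_n$ yields, for each $k\ge 0$, an equation expressing $a_j\cdot \bar{\phi^{n+k}x}$ with $a_j$ one of the unit coefficients of $f$ (some such unit exists since $c(f)=R$ and $s\notin R^{\times}$) as an $s$-multiple of a higher element of $V$, allowing every generator of $V$ to be divided by $s$; the archimedean density of $\Gamma\subseteq\R$ then upgrades $s$-divisibility to divisibility by every nonzero $r\in R$. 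The main obstacle will be proving uniseriality together with the precise identification as $Q/R$: the standard route, parallel to the Abelian-group argument in \cite[Cor.~3.15]{DGSV}, is to pass to the divisible hull of $T(\phi,x)$, identify it with $Q[X]/(f)\cong Q^n$, and track the ascending chain of cyclic submodules of $V$ inside $Q^n/T_n(\phi,x)$, using that $R[s^{-1}]=Q$ in the archimedean setting to see that this chain exhausts a single copy of $Q/R$.
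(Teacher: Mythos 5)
Your arguments for parts (1)--(4) are correct and close in spirit to the paper's, with a few stylistic variants. For (1), you tensor with $Q$ and argue by rank; the paper instead takes $m$ minimal with a torsion relation $t\phi^m x\in T_m(\phi,x)$ and shows $T(\phi,x)/T_m(\phi,x)$ is torsion, so $m=n$ --- both are clean. For (2) and (3) you are essentially on the paper's route; your Gauss-lemma argument over $Q[X]$ (so that $c(hf)=c(h)c(f)=c(h)\subseteq R$, forcing $h\in R[X]$) is a legitimate substitute for the paper's device of scaling the division by $s^{k-n+1}$ and then dividing the content out. For (4), your direct degree/leading-coefficient comparison inside $R[X]/(f)$ is a nice alternative to the paper's argument, which combines part (3) with the inequality $L_v(T_{k+1}/T_k)\le L_v(T_{n+1}/T_n)$ from \cite[Lem.~1.9]{SZ}. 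Just tighten the wording: ``forces $h=cX^{k-n}$'' should say that $\deg h\le k-n$ with $X^{k-n}$-coefficient $c$ and $r=cs$ (so $r\ne0$ forces $\deg h=k-n$); and you should also record the easy containment $sR\subseteq\mathrm{Ann}(\bar X^k+T_k)$, which comes from $s\bar X^k = X^{k-n}f(\bar X) + (\text{degree}<k)$.

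Part (5) is where the genuine gap lies, and you are right to flag uniseriality and the identification with $Q/R$ as the obstacle --- but the route you sketch cannot close it, because the claim is in fact \emph{false} as stated. Take $f(X)=sX^2-sX-1$ with $v(s)>0$; here $c(f)=R$ (the constant term is a unit) and $s$ is the leading coefficient, so the hypotheses hold with $n=2$. In $R[X]/(f)$ one has $\bar X^2-\bar X=1/s$, whence $R[\bar X]\supseteq R[1/s]=Q$ (archimedean), and a direct computation gives $T(\phi,x)\cong R[X]/(f)=Q\cdot 1\oplus Q\cdot\bar X\cong Q^2$ while $T_2(\phi,x)=R\cdot1\oplus R\cdot\bar X\cong R^2$. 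So $T(\phi,x)/T_2(\phi,x)\cong (Q/R)^2$: divisible and torsion, exactly as your argument shows, but neither uniserial nor isomorphic to $Q/R$. Concretely $T_4(\phi,x)=s^{-1}R^2$, so $T_4/T_2\cong (R/sR)^2$, not $R/s^2R$. The step that fails in your outline (and in the paper's own proof, which makes the same jump) is passing from ``$T_{k+1}/T_k\cong R/sR$ for all $k\ge n$'' to ``$T_{n+k}/T_n\cong R/s^kR$ with connecting maps multiplication by $s$'': part (4) controls the successive factors of the filtration but not the isomorphism type of $T_{n+k}/T_n$, and these extensions can split. What can safely be extracted, and what you essentially do prove, is that $T(\phi,x)/T_n(\phi,x)$ is a divisible torsion module embedding in $(Q/R)^n$, hence a finite direct sum of copies of $Q/R$; it is a single copy precisely in favourable cases (for instance when $n=1$, or when the coefficient $sq_{n-1}$ of $f$ is a unit). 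This does not propagate: the proof of the IAYF (Theorem 5.1) uses only part (4), and the Uniqueness Theorem uses part (3), so part (5) is an unused side remark.
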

\begin{proof}
(1) Let $m \geq 1$ be the minimal positive integer such that $t \phi^m x \in T_m(\phi,x)$ for some $0 \neq t \in R$. Then $T_m(\phi,x)= \bigoplus_{1 \leq i \leq m-1} \phi^i xR$ and a simple computation shows that, for each $k \geq 1$,
$t^k \phi^{m+k-1}x \in T_m(\phi,x)$; consequently $T(\phi,x)/T_m(\phi,x)$ is a torsion module, so $T_m(\phi,x)$ has rank $m$. This shows that $m=n$.

\smallskip\noindent
(2) Choose an element $t \in R$ in such a way that  $t \phi^n x \in T_n(\phi,x)$, and let $t \phi^n x= \sum_{0 \leq i \leq n-1} r_i \phi^i x$. Let $r_j$ be such that $v(r_j) \leq v(r_i)$ for every $i \leq n-1$. Setting $s=tr_j^{-1}$, the polynomial $f(X)=s \phi^n X - \sum_{0 \leq i \leq n-1} r_j^{-1}r_i \phi^i X$ clearly satisfies $c(f(X))=R$ and $f(\phi)(x)=0$.

\smallskip\noindent
(3) Consider the ideal of $R[X]$:
$$K_x= \{ g(X) \ | \ g(\phi)=0 \}. $$
The map $R[X] \to T(\phi,x)$ sending $g(X)$ into $g(\phi)$ is surjective and has kernel $K_x$, hence $R[X]/K_x \cong T(\phi,x)$. Clearly the ideal $(f(X))$ is contained in $K_x$, so, in order to conclude, it is enough to prove that,
if $g(\phi)=0$, then $g(X)$ is a multiple of $f(X)$. If $g(\phi)=0$, then obviously $g(X)$ has degree $k \geq n$; without loss of generality we can assume $g(X)$ primitive, that is, $c(g(X))=R$. The division algorithm gives:
$$s^{k-n+1}g(X)=f(X)q(X)+r(X)$$
where $r(X)$ has degree $<n$. This implies that $r(X)=0$, since $r(\phi)=0$. Therefore $s^{k-n+1}g(X)=f(X)q(X)$, thus $s^{k-n+1}$ divides $c(f(X)q(X))=c(q(X))$ (see property (b) of the content in \cite[pg.\,7]{FS1}). Dividing the above equality by $s^{k-n+1}$ we derive that $g(X)$ is a multiple of $f(X)$, as desired.

\smallskip\noindent
(4) The proof goes by induction on $k \geq n$, the starting case $k=n$, which follows from (2). Let $k>n$ and consider  the cyclic factor module $T_{k+1}(\phi,x)/T_k(\phi,x) \cong R/I_k$, where $I_k$ is a non-zero ideal of $R$. By \cite[Lem.\,1.9]{SZ}, 
\[L_v(T_{k+1}(\phi,x)/T_k(\phi,x)) \leq L_v(T_{n+1}(\phi,x)/T_n(\phi,x))=v(s),\]
therefore $I_k \geq sR$.
Let us assume, by way of contradiction, that $I_k > sR$. Then there exists $t \in I_k \setminus sR$ such that a polynomial $g(X)$ of degree $k$ and leading coefficient $t$ satisfies $g(\phi)(x)=0$. By point (3), $g(X)$ is a multiple of $f(X)$, hence $t\in sR$, a contradiction.

\smallskip\noindent
(5) The first claim is obvious. If $v(s)>0$, then (4) implies that $T(\phi,x)/T_n(\phi,x)$ is isomorphic to the direct limit of the direct system $\{ R/s^nR \}_{n\geq 1}$, where the connecting maps $R/s^nR \to R/s^{n+1}R$ are induced by the multiplication by $s$, and this direct limit is isomorphic to $Q/R$, being $R$ archimedean.
\end{proof}

\begin{proof}[Proof of Theorem \ref{IAYF}.]
The $Q$-vector space $V=Q^n$, endowed with the structure of $Q[X]$-module induced by $\phi$, is finitely generated and torsion. Since $Q[X]$ is a PID, $V_\phi$ decomposes into the direct sum of cyclic $Q[X]$-modules:
$$V_\phi=V_1 \oplus \cdots \oplus V_r.$$
If $\phi_i$ denotes the restriction of $\phi$ to the submodule $V_i$, then for the characteristic polynomials over $Q$ we have the factorization:
$$p_\phi(X)= \prod_{1 \leq i \leq r}p_{\phi_i}(X).$$
Then the Addition Theorem ensures that it is enough to prove the result for the cyclic summands, that is, we can assume that $V_\phi$ is a cyclic $Q[X]$-module. Then there exists an element $x \in V$ such that:
$$V=xQ \oplus \phi xQ \oplus \cdots \oplus \phi^{n-1} xQ.$$
Let $F=\bigoplus_{0\leq i \leq n-1}  \phi^{i} xR$ be the free partial $n$-th trajectory of $x$ of full rank in $V$. Then, by Proposition \ref{freeinfr}, $F$ is $\phi$-inert in $V$ and $\widetilde{\ent}_v(\phi)=\widetilde{\ent}_v(\phi,F)$. Now  Proposition \ref{inf} ensures that
\[
\widetilde{\ent}_v(\phi,F)=\inf_k  L_v\left(\frac{T_{k+1}(\phi, F )}{T_k(\phi, F )}\right).
\]
But for each $k \geq 1$ we have that $T_k(\phi,F)=T_{n+k-1}(\phi,x)$, therefore Proposition \ref{cyclictrajectory} (4) implies that $T_{k+1}(\phi,F)/T_k(\phi,F) \cong R/sR$ for every $k \geq 1$. We deduce that
$\widetilde{\ent}_v(\phi)=L_v(R/sR)=v(s)$, that gives the proof, since the polynomial $f(X)$ of Proposition \ref{cyclictrajectory} (2) coincides, in case $V$ is a cyclcic $\phi$-trajectory, with the characteristic polynomial of $\phi$ over $R$.
\end{proof}

\section{The Uniqueness Theorem }

The next result it is used at the beginning of the proof of \cite[Thm.\,7.3]{SV} without an explicit proof, and it is a consequence of \cite[Prop.\,2.6]{Z} in the particular case in which $M$ is finitely generated and torsion; so, for the sake of completeness, we give here a detailed proof.  

\begin{proposition}\label{bernoulli}
Let $M$ be a $R$-module of finite valuation length and let 
\[
\beta\colon \bigoplus _{n \geq 1} M_n \to  \bigoplus _{n \geq 1} M_n
\] 
be the right Bernoulli shift, where $M_n=M$ for all $n$, i.e. $\left(\bigoplus _{n \geq 1} M_n\right)_\beta\cong M\otimes_{R}R[X]$. Then, 
$\widetilde{\ent}_v(\beta)=L_v(M)$. 
\end{proposition}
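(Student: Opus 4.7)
The plan is to prove $L_v(M)\le \ient(\beta)\le L_v(M)$ directly from the definition. For the lower bound, I would take the $L_v$-finite submodule $H=M_1\subseteq N:=\bigoplus_{n\ge 1}M_n$; being $L_v$-finite, $H$ lies in $\Fin(N)\subseteq \I_\beta(N)$. Because the right shift sends $M_j$ to $M_{j+1}$, one has the explicit description
\[
T_n(\beta,H)=M_1\oplus M_2\oplus\cdots\oplus M_n,
\]
so additivity of $L_v$ gives $L_v(T_n(\beta,H)/H)=(n-1)L_v(M)$, and passing to the limit yields $\ient(\beta,H)=L_v(M)$.

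For the upper bound, I would fix an arbitrary $K\in\I_\beta(N)$ and $\epsilon>0$, and apply Lemma \ref{fingen}(1) to produce a finitely generated submodule $F\le K$ satisfying
\[
L_v(T_n(\beta,K)/K)-L_v((T_n(\beta,F)+K)/K)<n\epsilon
\]
for every $n\ge 1$. Since $F$ is finitely generated it is contained in some $M_1\oplus\cdots\oplus M_k$, whence $T_n(\beta,F)\subseteq M_1\oplus\cdots\oplus M_{n+k-1}$, and additivity of $L_v$ gives the crude bound $L_v(T_n(\beta,F))\le (n+k-1)L_v(M)$. The canonical surjection $T_n(\beta,F)\twoheadrightarrow (T_n(\beta,F)+K)/K$ then implies
\[
\frac{L_v(T_n(\beta,K)/K)}{n}\le \frac{n+k-1}{n}L_v(M)+\epsilon.
\]
Letting $n\to\infty$, then $\epsilon\to 0$, and finally taking the supremum over $K\in\I_\beta(N)$ produces $\ient(\beta)\le L_v(M)$.

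I do not anticipate any serious obstacle: the trajectories of the right shift along the ``canonical'' generators $M_j$ admit a completely transparent description, and Lemma \ref{fingen}(1) performs the essential technical reduction, confining an arbitrary $\beta$-inert submodule to a finitely generated $F$ which can then be trapped inside a finite direct sum of copies of $M$ where the $L_v$-length estimate is immediate.
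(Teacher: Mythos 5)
Your proof is correct, and the upper bound is handled by a genuinely different route than the paper's. For the lower bound you both take $H=M_1$ and compute the trajectory explicitly, so nothing to distinguish there. For the upper bound, the paper first invokes Proposition \ref{torsionequal} (noting that finite valuation length forces $M$, hence $\bigoplus_n M_n$, to be torsion) to replace $\ient$ by $\ent_v$, then uses the external result \cite[Prop.\,4.2]{SV} that $\ent_v(\beta)$ is the supremum over \emph{finitely generated} $F$, and traps such an $F$ inside a partial trajectory $T_j(\beta,M_1)$, finishing with Corollary \ref{inf_coro}. You instead stay entirely with $\ient$ and use Lemma \ref{fingen}(1) to approximate an arbitrary $\beta$-inert $K$ by a finitely generated $F\leq K$ with an error of $n\epsilon$ in the $n$-th trajectory length, then bound $L_v(T_n(\beta,F))$ by $(n+k-1)L_v(M)$ by trapping $F$ inside $M_1\oplus\cdots\oplus M_k$. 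Your argument is more self-contained (it avoids the reduction to $\ent_v$ and the citation of \cite[Prop.\,4.2]{SV}), at the cost of carrying the $\epsilon$-bookkeeping; the paper's version is shorter modulo the imported machinery. Both are valid; just make sure, when you finalize your version, to note explicitly that $L_v(T_n(\beta,F))\leq (n+k-1)L_v(M)<\infty$ before invoking the surjection $T_n(\beta,F)\twoheadrightarrow (T_n(\beta,F)+K)/K$, so that the length comparison is legitimate.
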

\begin{proof}
By Proposition \ref{torsionequal}, it is enough to check that $\ent_v(\beta)=L_v(M)$, since $M$ must be a torsion module, so also $ \bigoplus _{n \geq 1} M_n$ is torsion. From the equality 
\[
\ent_v(\beta,M_1)=\inf_n L_v\left(\frac{T_{n+1}(\beta, M_1 )}{T_n(\beta, M_1)}\right)
\]
 and since $\frac{T_{n+1}(\beta, M_1 )}{T_n(\beta, M_1)} \cong M$ for any $n$, we derive that 
$L_v(M)=\ent_v(\beta,M_1) \leq \ent_v(\beta)$. On the other hand, by \cite[Prop.\,4.2]{SV}, $\ent_v(\beta)= \sup_ F \ent_v(\beta,F)$, with $F$ finitely generated $R$-submodule of $ \bigoplus _{n \geq 1} M_n $. Given such an $F\leq  \bigoplus _{n \geq 1} M_n $, then $F \leq T_j(\beta, M_1)=T_j$ for a certain $j \geq 1$ so, by Corollary \ref{inf_coro}, we can conclude that
\begin{equation*}
\ent_v(\beta,F)\leq \ent_v(\beta, T_j)=\ent_v(\beta, M_1)=L_v(M).
\qedhere
\end{equation*}
%
\end{proof}

The next theorem is based on \cite[Thm.\,7.3]{SV}, that is, the Uniqueness Theorem for the valuation entropy $\ent_v$ on the category of torsion modules.

\begin{theorem}[Uniqueness Theorem]\label{UT} 
Let $R$ be the valuation domain of an archimedean non-discrete valuation $v:Q \to \R \cup  \{ \infty \}$ on its field of quotient $Q$, and let $L_v$ be the induced non-discrete length function on $\mod R$. The intrinsic valuation entropy $\widetilde{\ent}_v$ is the unique length function $L_X \colon \mod{R[X]} \to \R^*$ such that:
\begin{enumerate}
\item[\rm (1)] $L_X(M \otimes_R R[X]))=L_v(M)$ for all modules $M$ (of finite length);
\item[\rm (2)] $L_X(M_\phi)=v(s)$, for any linear transformation $\phi\colon M\to M$ of a finite dimensional $Q$-vector space, where $s$ is the leading coefficient of the characteristic polynomial of $\phi$ over $R$.
\end{enumerate}
\end{theorem}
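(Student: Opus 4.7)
The plan is to first verify that $\widetilde{\ent}_v$ satisfies conditions (1) and (2), and then establish uniqueness by reducing to finitely generated $R[X]$-modules and combining the Addition Theorem with the torsion-case Uniqueness Theorem from \cite{SV}. Condition (2) is precisely the IAYF (Theorem \ref{IAYF}). Condition (1) is the content of Proposition \ref{bernoulli}: if $M$ has finite valuation length, then the $R[X]$-module $M \otimes_R R[X]$ is isomorphic to $\bigoplus_{n\geq 1} M$ equipped with the right Bernoulli shift $\beta$, and the proposition gives $\widetilde{\ent}_v(\beta) = L_v(M)$.

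For uniqueness, let $L_X$ be another length function on $\mod{R[X]}$ satisfying (1) and (2). By upper continuity of both invariants (Corollary \ref{UC}), it is enough to prove $L_X(\phi) = \widetilde{\ent}_v(\phi)$ on finitely generated $R[X]$-modules $M_\phi = T(\phi, F)$. If $\rk_R(M) = \infty$, then $\widetilde{\ent}_v(\phi) = \infty$ by Proposition \ref{entropy_fg_coro}, the argument producing a free cyclic trajectory isomorphic to $R[X]$ inside $M/tM$. Condition (1) applied to the quotients $R[X] \twoheadrightarrow (R/I) \otimes_R R[X]$ together with additivity of $L_X$ gives $L_X(R[X]) \geq L_v(R/I)$ for every nonzero proper ideal $I$; since $\Gamma(R)$ is unbounded, this forces $L_X(R[X]) = \infty$, and hence $L_X(\phi) = \infty$ by monotonicity.

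Assume now $\rk_R(M) = k < \infty$. Applying the Addition Theorem (Theorem \ref{additiontheorem}) to
\[
0 \to (tM)_{\phi\restriction_{tM}} \to M_\phi \to (M/tM)_{\bar\phi} \to 0
\]
reduces matters to agreement on the torsion and torsion-free parts separately. On torsion $R[X]$-modules, $\widetilde{\ent}_v$ coincides with $\ent_v$ by Proposition \ref{torsionequal}, and the restriction of $L_X$ to this subcategory is a length function satisfying (1); by \cite[Thm.\,7.3]{SV} this restriction must equal $\ent_v$, so $L_X = \widetilde{\ent}_v$ on all torsion modules. For the torsion-free rank-$k$ piece, embed $M/tM$ in its divisible envelope $Q^k$ and uniquely extend $\bar\phi$ to a $Q$-linear transformation $\widetilde\phi \colon Q^k \to Q^k$. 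A second application of the Addition Theorem to
\[
0 \to (M/tM)_{\bar\phi} \to (Q^k)_{\widetilde\phi} \to \left(Q^k/(M/tM)\right)_{\bar{\widetilde\phi}} \to 0
\]
combined with condition (2) (which gives $L_X(\widetilde\phi) = v(s) = \widetilde{\ent}_v(\widetilde\phi)$ via the IAYF) and the previously-established agreement on the torsion quotient $Q^k/(M/tM)$, yields $L_X(\bar\phi) = \widetilde{\ent}_v(\bar\phi)$.

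The delicate point is the torsion-free step: the cokernel $Q^k/(M/tM)$ is typically not finitely generated, so the full strength of the torsion Uniqueness Theorem from \cite{SV} (valid for \emph{all} torsion $R[X]$-modules, not only finitely generated ones) is essential. The Addition Theorem of Section~4 is then what allows us to splice the torsion and torsion-free contributions into a global equality.
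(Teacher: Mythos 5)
Your proof is correct and follows the same skeleton as the paper's: conditions (1) and (2) via the Bernoulli-shift computation and the IAYF, uniqueness by splitting along the torsion subsequence with the Addition Theorem, and the torsion case settled by combining Proposition~\ref{torsionequal} with the Uniqueness Theorem of \cite{SV}. The minor differences are points where you are actually more careful than the printed text. The paper reduces to cyclic $R[X]$-modules $T(\phi,x)$ and, in the torsion-free finite-rank subcase $T(\phi,x)\cong R[X]/(f(X))$, simply asserts ``$L_X(T(\phi,x)_\phi)=v(s)$ by hypothesis''; but condition (2) is stated only for $Q$-linear transformations of $Q^n$, not for $R[X]/(f(X))$, so an intermediate step is needed. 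You supply exactly that step: embed $M/tM$ into its divisible envelope $Q^k$, apply the Addition Theorem to $0\to M/tM\to Q^k\to Q^k/(M/tM)\to 0$, use condition (2) on $Q^k$ and the already-established agreement of $L_X$ with $\widetilde{\ent}_v$ on the (non-finitely-generated) torsion cokernel. Likewise, for the infinite-rank case the paper invokes \cite[Thm.~2]{NR} to rule out $L_X$ being a multiple of $\rk_{R[X]}$, while you derive $L_X(R[X])=\infty$ directly from condition (1), additivity, and density of $\Gamma(R)$ in $\R$ --- a self-contained substitute. Finally you reduce to finitely generated rather than cyclic $R[X]$-modules; both reductions are legitimate since a length function is determined by either family. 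In short: same route, with two small gaps in the paper's exposition filled in.
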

\begin{proof} Since the $R[X]$-module $M \otimes_R R[X]$ is isomorphic to $ \bigoplus _{n \geq 1} M_n $ endowed with the right Bernoulli shift $\beta$, where $M_n=M$ for all $n$, Proposition \ref{bernoulli} shows that $\widetilde{\ent}_v$ satisfies condition (1) when $M$ has finite valuation length, and it is immediate to derive that (1) follows also when $L_v(M)= \infty$. Condition (2) is satisfied by $\widetilde{\ent}_v$ by Theorem \ref{IAYF}. 

Concerning  uniqueness, since a  length function is determined by the values it takes on cyclic modules, it is enough to prove that $L_X(T(\phi,x)_\phi)=\widetilde{\ent}_v(T(\phi,x)_\phi)$ for any $x\in M$, where $\phi\colon M \to M$ is an endomorphism of an $R$-module $M$. In view of the Addition Theorem, we can assume that $M$ is either torsion or torsion-free. In the first case \cite[Thm.\,7.3]{SV} shows that $L_X(T(\phi,x)_\phi)=\ent_v(T(\phi,x)_\phi)$, so the conclusion follows by Proposition \ref{torsionequal}. In the latter case, either
$T(\phi,x)_\phi \cong R[X]$, in case $T(\phi,x)$ is an $R$-module of infinite rank, or $T(\phi,x)_\phi\cong R[X]/(f(X))$, by Proposition \ref{cyclictrajectory}, where $f(X)$ is the characteristic polynomial of $\phi_{ \restriction T(\phi,x)}$ over $R$.

If $T(\phi,x)_\phi \cong R[X]$, then \cite[Thm.\,2]{NR} ensures that $L_X(T(\phi,x)_\phi)=\infty$, otherwise $L_X$ would be equivalent to $\rk_{R[X]}$, which is impossible. Also $\ient(T(\phi,x)_\phi)=\infty$, as a consequence of Proposition \ref{inf}.

If $T(\phi,x)_\phi \cong R[X]/(f(X))$, $L_X(T(\phi,x)_\phi)=v(s)$ by hypothesis, where $s$ is the leading coefficient of $f(X)$, and also $\widetilde{\ent}_v((T(\phi,x)_\phi))=v(s)$, by the IAYF. So we are done.
\end{proof}

\bigskip

\providecommand{\bysame}{\leavevmode\hbox to3em{\hrulefill}\thinspace}
\providecommand{\MR}{\relax\ifhmode\unskip\space\fi MR }
\providecommand{\MRhref}[2]{%
  \href{http://www.ams.org/mathscinet-getitem?mr=#1}{#2}
}
\providecommand{\href}[2]{#2}

\end{document}